\documentclass[12pt]{article}
\usepackage{amssymb}
\usepackage{amsfonts}
\usepackage{amsmath}

\setcounter{MaxMatrixCols}{10}

\newtheorem{theorem}{Theorem}[section]

\newtheorem{case}{Case}[section]
\newtheorem{claim}{Claim}[section]

\newtheorem{corollary}{Corollary}[section]

\newtheorem{definition}{Definition}[section]

\newtheorem{lemma}{Lemma}[section]
\newtheorem{notation}{Notation}[section]

\newtheorem{proposition}{Proposition}[section]
\newtheorem{remark}{Remark}[section]

\newenvironment{proof}[1][Proof]{\noindent\textbf{#1.} }{\ \rule{0.5em}{0.5em}}
\input{tcilatex}
\begin{document}

$\varphi -${\Large Contractive parent-child infinite IFSs and orbital }$%
\varphi -${\Large contractive infinite IFSs}

\bigskip

Alexandru Mihail$^{\text{a}}$ and Irina Savu$^{\text{b}}$

\bigskip

$^{\text{a}}${\small University of Bucharest, Faculty of Mathematics and
Computer Science, Str. Academiei 14, 010014 Bucharest, Romania,
mihail\_alex@yahoo.com;}

$^{\text{b}}${\small University Politehnica of Bucharest, Faculty of Applied
Sciences, Str. Splaiul Independen\c{t}ei 313, 060042 Bucharest, Romania,
maria\_irina.savu@upb.ro, ORCID iD https://orcid.org/0000-0003-3647-3152. }

\begin{quotation}
{\small ABSTRACT}

{\small In this paper we introduce the notions of }$\varphi ${\small %
-contractive parent-child infinite iterated function system (pcIIFS) and
orbital }$\varphi ${\small -contractive infinite iterated function system
(oIIFS) and we prove that the corresponding fractal\ operator is weakly
Picard. The corresponding notions of shift space, canonical projection and
their properties are also treated. \ }

{\small KEYWORDS}

{\small weakly Picard operator, fractal operator, infinite iterated function
system, canonical projection.}

{\small MATHEMATICS SUBJECT CLASSIFICATION (2010)}

{\small 28A80, 37C70, 54H25.}
\end{quotation}

\section{\protect\Large Introduction}

\ \ \ \ \ The concept of iterated function system (IFS for short) was
introduced forty years ago by J. Hutchinson in \cite{Hutch}, popularized by
Michael Barnsley in \cite{Barnsley} and it represents one of the most
important methods of constructing fractals. In the last years there have
been considered many generalizations of this concept. A first direction of
generalization consists of using weaker contractivity conditions. Along
these lines of research let us mention some papers. For example, in \cite%
{Ioana} L. Ioana and A. Mihail introduced and studied IFSs consisting of $%
\varphi $-contractions and in \cite{Savu} I. Savu studied IFSs consisting of
continuous functions satisfying Banach's orbital condition. In \cite{Ge1} F.
Georgescu generalized the concept of IFS consisting of convex contractions
and in \cite{Sec} N. A. Secelean introduced a new type of IFS, namely IFS
consisting of $F$-contractions. The fractal operator associated to IFSs with
weaker contractivity conditions could have the same properties as the
component functions (see \cite{Str}) or not (see \cite{Miculescu}). In this
case, it may appear some difficulties. For example, in \cite{VanDung} N. Van
Dung and A. Petru\c{s}el pointed out the problems in providing some results
of IFSs consisting of Kannan maps, Reich maps and Chatterjea type maps.

A second way to generalize the notion of IFS was to consider systems
consisting of an arbitrary number (finite or infinite) of functions. For
example, in \cite{Fernau} H. Fernau studied infinite IFSs, in \cite{Luk} G.
Gw\'{o}\'{z}d\'{z}-Lukowska and J. Jachymski presented the
Hutchinson-Barnsley theory for infinite IFSs and in \cite{Dum} D. Dumitru
studied arcwise connected attractors of infinite IFSs. Also, I. Jaksztas in 
\cite{Jak} studied the infinite IFSs depending on a parameter and F.
Mendivil in \cite{Mendivil} constructed a generalization of IFS with
probabilities to infinitely many maps. The infinite IFSs were also studied
in \cite{Chi}, \cite{Hille}, \cite{Mauldin} and \cite{MiculMihail2}.

Another way to generalize the IFSs was to change the structure of component
functions or the structure of space. For example, R. D. Mauldin and M. Urba%
\'{n}ski in \cite{Urb} studied graph directed Markov systems and in \cite%
{MiculUrz} R. Miculescu and S. Urziceanu studied the canonical projection of
generalized IFSs.

Related to the concept of IFS is the notion of shift (or code) space. The
shift space of an iterated function system and the address of the points
lying on the attractor of the IFS are very good tools to get a more precise
description of the invariant dynamics of the IFS and of the topological
properties of the attractor. For example, in \cite{Dumitru} D. Dumitru
studied the topological properties of the attractors of IFS and in \cite%
{Strobin} F. Strobin studied this problem in the framework of generalized
iterated function systems.

In this paper we use the notion of parent-child contractivity condition to
define the notion of $\varphi $-contractive parent-child infinite iterated
function system (pcIIFS). The parent-child contractivity condition was also
used in \cite{Zaharopol} by R. Zaharopol\ to study IFS with probabilities.
Another notion introduced in this paper is that of orbital $\varphi $%
-contractive infinite iterated function system (oIIFS).

In the first part of the main results we study the fractal operator
associated to a pcIIFS and we prove that it is weakly Picard. Also, we
construct the canonical projection and we study its properties. We define a
continuous function $\Theta $ (which is uniformly continuous on bounded
sets) which describes the dynamics of an infinite iterated function system
(IIFS for short) better than the canonical projection. Using the function $%
\Theta $ we obtain the canonical projection and we define an extended
canonical projection, $\pi ^{t}$.

If we consider a pcIIFS $\mathcal{S}=\left( \left( X,d\right) ,\left(
f_{i}\right) _{i\in I}\right) $ such that the fractal operator associated to 
$\mathcal{S}$ is a Picard operator, then as $S_{\Lambda \left( I\right)
}=\left( \Lambda \left( I\right) ,\left( F_{i}\right) _{i\in I}\right) $ is
a universal model for the pcIIFS $S$ restricted to its fixed point, we have
that the system $S_{\Lambda ^{t}\left( I\right) }=(\Lambda ^{t}\left(
I\right) \times X,\left( F_{i}^{t}\right) _{i\in I})$ is a universal model
for the pcIIFS $S$ (see Remark \ref{rem}).

The second part of the main results is dedicated to the study of oIIFSs. We
study similar properties with those presented in the first part.

\section{\protect\Large Preliminaries}

\textbf{Notations and terminology}

\bigskip

Given a set $X$, a function $f:X\rightarrow X$ and $n\in 
\mathbb{N}
^{\ast },$ by $f^{[n]}$ we mean $f\circ f\circ ...\circ f$ \ for $n$ times.
Also, by $Id_{X}:X\rightarrow X$ we mean the function defined by $%
Id_{X}\left( x\right) =x$ for every $x\in X$.

Given a metric space $\left( X,d\right) ,$ by:

- $diam\left( A\right) $ we mean the diameter of the subset $A$ of $X;$

- $P_{b}\left( X\right) $ we mean the set of non-empty bounded subsets of $%
X; $

- $P_{cl,b}\left( X\right) $ we mean the set of non-empty closed and bounded
subsets of $X;$

- a weakly Picard operator we mean a function $f:X\rightarrow X$ having the
property that, for every $x\in X,$ the sequence $\left( f^{[n]}\left(
x\right) \right) _{n\in 
\mathbb{N}
}$ is convergent to a fixed point of $f$;

- a Picard operator we mean a function $f:X\rightarrow X$ having the
property that the sequence $\left( f^{[n]}\left( x\right) \right) _{n\in 
\mathbb{N}
}$ is convergent to the unique fixed point of $f$, for all $x\in X$;

- a continuous weakly Picard operator we mean a function $f:X\rightarrow X$
which is a weakly Picard operator and the function $x\rightarrow %
\displaystyle\lim_{n\rightarrow \infty }f^{n}\left( x\right) $ for all $x\in
X$ is continuous.

Let $\left( X,d_{X}\right) $ and $\left( Y,d_{Y}\right) $ be two metric
spaces. By

- $\mathcal{C}\left( X,Y\right) $ we mean the set of continuous functions
from $X$ to $Y$;

- $\mathcal{C}_{b}\left( X,Y\right) $ we mean the set of continuous and
bounded functions from $X$ to $Y$;

- $d_{\max }$ we mean the metric on $X\times Y$ defined by $d_{\max }\left(
\left( x_{1},y_{1}\right) ,\left( x_{2},y_{2}\right) \right) =\max \left\{
d_{X}\left( x_{1},x_{2}\right) ,d_{Y}\left( y_{1},y_{2}\right) \right\} $
for all $\left( x_{1},y_{1}\right) ,\left( x_{2},y_{2}\right) \in X\times Y$.

Given a metric space $(X,d)$ and two functions $f,g:X\rightarrow X$, by $%
d_{u}\left( f,g\right) $ we mean the uniform distance between $f$ and $g$,
namely 
\begin{equation*}
d_{u}\left( f,g\right) =\sup_{x\in X}d\left( f\left( x\right) ,g\left(
x\right) \right) \text{.}
\end{equation*}

\begin{definition}
Let $\left( X,d_{X}\right) $ and $\left( Y,d_{Y}\right) $ be two metric
spaces. A family of functions $\left( f_{i}\right) _{i\in I}$ is said to be

1) bounded if the set $\cup _{i\in I}f_{i}\left( B\right) $ $\in P_{b}\left(
X\right) $ for every $B\in $ $P_{b}\left( X\right) $,

2) equal uniformly continuous if for every $\varepsilon >0$ there exists $%
\delta _{\varepsilon }>0$ such that for all $x,y\in X$ with $d_{X}\left(
x,y\right) <\delta _{\varepsilon }$ we have $d_{Y}\left( f_{i}\left(
x\right) ,f_{i}\left( y\right) \right) <\varepsilon $, for all $i\in I$.
\end{definition}

\begin{definition}
For a metric space $\left( X,d\right) $ we consider on $P_{b}\left( X\right) 
$ the generalized Hausdorff-Pompeiu pseudometric $h:P_{b}\left( X\right)
\times P_{b}\left( X\right) \rightarrow \lbrack 0,\infty )$ defined by%
\begin{equation*}
h\left( A,B\right) =\max \left\{ d\left( A,B\right) ,d\left( B,A\right)
\right\}
\end{equation*}%
for all $A,B\in P_{b}\left( X\right) ,$ where $d\left( A,B\right) =%
\displaystyle\sup_{x\in A}\displaystyle\inf_{y\in B}d\left( x,y\right) $ and 
$\ d\left( B,A\right) =\displaystyle\sup_{x\in B}\displaystyle\inf_{y\in
A}d\left( x,y\right) $.
\end{definition}

\begin{definition}
The restriction of $h$ to $P_{cl,b}\left( X\right) $ is called the
Hausdorff-Pompeiu metric and it is also denoted by $h$.
\end{definition}

\begin{notation}
For a set $A\in P_{b}\left( X\right) $ and a number $r>0$, by $B\left[ A,r%
\right] $ we mean the set $\left\{ x\in X\text{ }|\text{ }h\left( x,A\right)
\leq r\right\} $.
\end{notation}

\textbf{Results regarding the Hausdorff-Pompeiu semidistance:}

\begin{proposition}
\label{h}[see \cite{SeceleanN}] For a metric space $\left( X,d\right) $ the
assertions below hold:

1) If $H$, $K\in $ $P_{b}\left( X\right) $, then%
\begin{equation}
h\left( H,K\right) =h\left( \overline{H},\overline{K}\right) \text{;}
\label{hpeinch}
\end{equation}

2) If $\left( H_{i}\right) _{i\in I\text{ }}$and $\left( K_{i}\right) _{i\in
I}$ are families of elements from $P_{b}\left( X\right) $ such that $%
\displaystyle\cup _{i\in I}H_{i}\in P_{b}\left( X\right) $ and $\displaystyle%
\cup _{i\in I}K_{i}\in P_{b}\left( X\right) $, then 
\begin{equation}
h\left( \displaystyle\cup _{i\in I}H_{i},\displaystyle\cup _{i\in
I}K_{i}\right) \leq \sup_{i\in I}h\left( H_{i},K_{i}\right) \text{;}
\label{sup}
\end{equation}

3) If $f:X\rightarrow X$ is a uniformly continuous function, $A\in
P_{b}\left( X\right) $ and $\left( A_{n}\right) _{n\in 
\mathbb{N}
}\subset P_{b}\left( X\right) $ such that $\displaystyle\lim_{n\rightarrow
\infty }h\left( A_{n},A\right) =0$, then $\displaystyle\lim_{n\rightarrow
\infty }h\left( f\left( A_{n}\right) ,f\left( A\right) \right) =0$.
\end{proposition}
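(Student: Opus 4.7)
The three claims are independent and each rests on a routine manipulation of suprema, infima, and continuity, so I would treat them one at a time. For part 1, I would first establish the pointwise equality $\inf_{y \in K} d(x,y) = \inf_{y \in \overline{K}} d(x,y)$ for every $x \in X$: the inequality $\geq$ holds because $K \subset \overline{K}$, while $\leq$ follows by approximating any $z \in \overline{K}$ by a sequence $z_n \in K$ and using the continuity of $d(x,\cdot)$. Hence the function $x \mapsto d(x,K)$ coincides with the continuous (indeed $1$-Lipschitz) function $x \mapsto d(x,\overline{K})$, so its supremum over $H$ agrees with its supremum over $\overline{H}$. This gives $d(H,K)=d(\overline{H},\overline{K})$; swapping the roles of $H$ and $K$ yields the other half, and \eqref{hpeinch} then follows by taking the maximum.

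For part 2, I would fix an arbitrary $x \in \bigcup_{i \in I} H_i$, pick $i_0 \in I$ with $x \in H_{i_0}$, and chain the obvious estimates
$$\inf_{y \in \bigcup_i K_i} d(x,y) \leq \inf_{y \in K_{i_0}} d(x,y) \leq d(H_{i_0},K_{i_0}) \leq h(H_{i_0},K_{i_0}) \leq \sup_{i \in I} h(H_i,K_i).$$
Taking the supremum on the left over $x \in \bigcup_i H_i$ yields the desired bound on $d(\bigcup_i H_i,\bigcup_i K_i)$; the symmetric bound on $d(\bigcup_i K_i,\bigcup_i H_i)$ is obtained by exchanging the two families, so \eqref{sup} follows.

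For part 3, fix $\varepsilon>0$ and use uniform continuity to produce $\delta>0$ such that $d(x,y)<\delta$ implies $d(f(x),f(y))\leq\varepsilon$. Pick $N$ so that $h(A_n,A)<\delta/2$ for $n\geq N$. For any $x \in A_n$ the estimate $\inf_{y \in A} d(x,y) \leq d(A_n,A) \leq h(A_n,A) < \delta/2$ provides a witness $y \in A$ with $d(x,y)<\delta$, hence $d(f(x),f(y))\leq\varepsilon$, and therefore $\inf_{z \in f(A)} d(f(x),z)\leq\varepsilon$; taking the supremum over $x \in A_n$ gives $d(f(A_n),f(A))\leq\varepsilon$. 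The reverse inequality follows identically from $d(A,A_n)<\delta/2$, so $h(f(A_n),f(A))\leq\varepsilon$ for $n\geq N$. The only genuinely delicate point in the whole proposition is this approximation step: because the infimum defining $d(x,A)$ need not be attained, one must insert the cushion $\delta/2<\delta$ so that a witness with \emph{strict} inequality $d(x,y)<\delta$ exists, and part 3 is where I would be most careful to spell the argument out in full.
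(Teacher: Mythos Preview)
Your argument for all three parts is correct and carefully written; the $\delta/2$ cushion in part~3 is exactly the right way to handle the possibly non-attained infimum. Note, however, that the paper does not supply its own proof of this proposition: it is stated as a preliminary fact with a citation to Secelean's monograph \cite{SeceleanN}, so there is no in-paper argument to compare against. Your direct verification is a standard and complete treatment of these folklore properties of the Hausdorff--Pompeiu pseudometric.
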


\begin{proposition}
\label{complet}[see \cite{SeceleanN}] \ If the metric space $\left(
X,d\right) $ is complete, then the metric space $\left( P_{cl,b}\left(
X\right) ,h\right) $ is complete.
\end{proposition}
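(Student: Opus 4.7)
The plan is to take an arbitrary Cauchy sequence $(A_n)_{n\in\mathbb{N}}$ in $(P_{cl,b}(X),h)$ and exhibit a limit inside $P_{cl,b}(X)$. The natural candidate is
\[
A := \bigcap_{n\geq 1}\overline{\bigcup_{k\geq n}A_k},
\]
which coincides with the set of all $x\in X$ arising as limits of sequences $(x_k)$ with $x_k\in A_{n_k}$ for some indices $n_k\to\infty$. This set is closed by construction, so the task reduces to proving that $A$ is bounded and non-empty and that $h(A_n,A)\to 0$.

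For boundedness I would use the Cauchy property to choose $N$ with $h(A_n,A_N)\leq 1$ for all $n\geq N$, so that $\bigcup_{n\geq N}A_n\subseteq B[A_N,1]$; since every point of $A$ is a limit of points drawn from this tail, $A\subseteq B[A_N,1]$, and $A_N$ being bounded forces $A$ to be bounded. For non-emptiness I would extract a subsequence $(A_{n_k})$ with $h(A_{n_k},A_{n_{k+1}})<2^{-k}$, fix an arbitrary $x_1\in A_{n_1}$, and inductively pick $x_{k+1}\in A_{n_{k+1}}$ with $d(x_k,x_{k+1})<2^{-k}$; this is possible because $d(x_k,A_{n_{k+1}})\leq h(A_{n_k},A_{n_{k+1}})$. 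The resulting sequence $(x_k)$ is Cauchy in the complete space $X$, hence converges, and its limit visibly belongs to $A$.

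The delicate step, and the main obstacle, is the convergence $h(A_n,A)\to 0$. Given $\varepsilon>0$, I would fix $N$ such that $h(A_n,A_m)<\varepsilon/2$ for all $n,m\geq N$. To bound $d(A_n,A)$ for $n\geq N$, I would take an arbitrary $x\in A_n$ and run the same inductive subsequence construction starting from $x_0=x$, producing $y\in A$ with $d(x,y)\leq\sum_{k\geq 0}2^{-k}\varepsilon/2$, so that after rescaling the tolerance one obtains $d(x,A)\leq\varepsilon$. To bound $d(A,A_n)$ for $n\geq N$, I would take $y\in A$, realise it as $y=\lim y_k$ with $y_k\in A_{m_k}$ and $m_k\geq N$ eventually, and combine $d(y_k,A_n)\leq h(A_{m_k},A_n)<\varepsilon/2$ with $y_k\to y$ to conclude $d(y,A_n)\leq\varepsilon$. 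Taking the sup over $x\in A_n$ and $y\in A$ yields $h(A_n,A)\leq\varepsilon$ for all $n\geq N$, which gives both the convergence and, combined with the earlier steps, the membership $A\in P_{cl,b}(X)$.
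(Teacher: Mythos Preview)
The paper does not supply its own proof of this proposition: it is stated with a citation to \cite{SeceleanN} and used as a black box throughout. So there is no paper proof to compare against.

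Your argument is the standard one for completeness of the hyperspace with the Hausdorff metric, and it is correct. The candidate limit $A=\bigcap_{n\ge 1}\overline{\bigcup_{k\ge n}A_k}$, its description as the set of subsequential limits, and the telescoping construction used both for non-emptiness and for bounding $d(A_n,A)$ are exactly how this result is proved in textbooks. The only place where your writeup is a little loose is the phrase ``after rescaling the tolerance'': to make that paragraph precise you should, given $\varepsilon>0$, first pick $N$ so that $h(A_m,A_p)<\varepsilon/2$ for all $m,p\ge N$, and then, for each fixed $n\ge N$ and each $x\in A_n$, choose indices $n=n_0<n_1<\cdots$ with $h(A_{n_k},A_{n_{k+1}})<2^{-(k+1)}\varepsilon$ and build the chain from $x$; the limit $y\in A$ then satisfies $d(x,y)\le\sum_{k\ge 0}2^{-(k+1)}\varepsilon=\varepsilon$. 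With that made explicit, every step goes through.
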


\textbf{Notations and terminology for the shift space}

\bigskip

$%
\mathbb{N}
$ denotes the natural numbers, $%
\mathbb{N}
^{\ast }=%
\mathbb{N}
\diagdown \left\{ 0\right\} $ and $%
\mathbb{N}
_{n}^{\ast }=\left\{ 1,2,...,n\right\} $, where $n\in 
\mathbb{N}
^{\ast }$. Given two sets $A$ and $B$, by $B^{A}$ we mean the set of all
functions from $A$ to $B$.

For a set $I$, by $\Lambda \left( I\right) $ we mean the set $I^{%
\mathbb{N}
^{\ast }}$. The elements of $\Lambda \left( I\right) $ can be written as
infinite words, namely $\omega =\omega _{1}\omega _{2}...\omega _{n}...$ .
For $\omega \in \Lambda \left( I\right) $ and $n\in 
\mathbb{N}
^{\ast }$ by $[\omega ]_{n}$ we mean the word formed with the first $n$
letters from $\omega $. For $\omega \in \Lambda _{m}\left( I\right) $ and $%
n\in 
\mathbb{N}
^{\ast }$, by $[\omega ]_{n}$ we mean the word formed with the first $n$
letters from $\omega $ if $m\geq n$, or the word $\omega $ if $m\leq n$.

By $\Lambda _{n}\left( I\right) $ we mean the set $I^{%
\mathbb{N}
_{n}^{\ast }}$. The elements of $\Lambda _{n}\left( I\right) $ are finite
words with $n$ letters: $\omega =\omega _{1}\omega _{2}...\omega _{n}$. In
this case, $n$ is called the length of $\omega $ and it is denoted by $%
|\omega |$. For two words $\alpha \in \Lambda _{n}\left( I\right) $ and $%
\beta \in \Lambda _{m}\left( I\right) $ or $\beta \in \Lambda \left(
I\right) $, by $\alpha \beta $ we mean the concatenation of $\alpha $ and $%
\beta $, i.e. $\alpha \beta =\alpha _{1}\alpha _{2}...\alpha _{n}\beta
_{1}\beta _{2}...\beta _{m}$ and $\alpha \beta =\alpha _{1}\alpha
_{2}...\alpha _{n}\beta _{1}\beta _{2}...\beta _{m}\beta _{m+1}...$
respectively.

For a family of functions $\left( f_{i}\right) _{i\in I},$ where $%
f_{i}:X\rightarrow X$ and $\omega =\omega _{1}\omega _{2}...\omega _{n}\in
\Lambda _{n}\left( I\right) $, we use the following notation: $f_{\omega
}=f_{\omega _{1}}\circ ...\circ f_{\omega _{n}}$. For a set $B$ $\subset X$
and $\omega \in \Lambda _{n}\left( I\right) $ we use the notation $B_{\omega
}=f_{\omega }\left( B\right) $. For $y,z\in B$, we say that $z$ is a child
of $y$ (or $y$ is a parent of $z$) if there exist $n\in 
\mathbb{N}
^{\ast }$, $\omega _{1},\omega _{2},...,\omega _{n+1}\in I$ and $x\in B$
such that $y=f_{\omega _{1}}\circ ...\circ f_{\omega _{n}}\left( x\right) $
and $z=f_{\omega _{1}}\circ ...\circ f_{\omega _{n}}\circ f_{\omega
_{n+1}}\left( x\right) $.

By $\Lambda ^{\ast }\left( I\right) $ we mean the set of all finite words, $%
\Lambda ^{\ast }\left( I\right) =\left( \displaystyle\cup _{n\in 
\mathbb{N}
^{\ast }}\Lambda _{n}\left( I\right) \right) \cup \left\{ \lambda \right\} $%
, where $\lambda $ is the empty word. By $\Lambda ^{t}\left( I\right) $ we
mean the set of all words with letters from $I\,$, namely the set $\Lambda
^{\ast }\left( I\right) \cup \Lambda \left( I\right) $.

\bigskip

For a fixed element $\tau \notin I$, we denote by $\widetilde{I}=I\cup
\left\{ \tau \right\} $. Let us consider $c\in \lbrack 0,1)$. We define a
function $d_{c}:\Lambda (\widetilde{I})\times \Lambda (\widetilde{I}%
)\rightarrow \lbrack 0,\infty )$ by 
\begin{equation*}
d_{c}\left( \alpha ,\beta \right) =\sum_{n\geq 1}c^{n}d\left( \alpha
_{n},\beta _{n}\right) ,
\end{equation*}%
for all $\alpha ,\beta \in \Lambda (\widetilde{I})$, where by $\alpha _{n}$
we mean the letter on position $n$ in $\alpha $ and $d\left( \alpha
_{n},\beta _{n}\right) =\left\{ 
\begin{array}{c}
1\text{, if }\alpha _{n}=\beta _{n} \\ 
0\text{, if }\alpha _{n}\neq \beta _{n}%
\end{array}%
\right. $ for all $n\in 
\mathbb{N}
^{\ast }$.

\begin{remark}
$\Lambda ^{t}\left( I\right) $ can be seen as a subset of $\Lambda (%
\widetilde{I})$, by defining the injective function $\iota :\Lambda
^{t}\left( I\right) \rightarrow \Lambda (\widetilde{I})$, $\iota \left(
\alpha \right) =\left\{ 
\begin{array}{c}
\alpha \text{, if }\alpha \in \Lambda \left( I\right) \\ 
\alpha \tau \tau ...\tau ...\text{ if }\alpha \in \Lambda ^{\ast }\left(
I\right) \text{.} \\ 
\tau \tau ...\tau ...\text{ if }\alpha =\lambda%
\end{array}%
\right. $
\end{remark}

\begin{remark}
$\left( \Lambda ^{t}\left( I\right) \text{, }d_{c}\right) $ is a complete
metric space, $\Lambda \left( I\right) $ is a closed subset of $\Lambda
^{t}\left( I\right) $ and $\Lambda ^{\ast }\left( I\right) $ contains only
isolated points.
\end{remark}

\textbf{Infinite iterated function systems}

\begin{definition}
\label{compfunc}Let $\left( X,d\right) $ be a metric space. A function $%
\varphi :[0,\infty )\rightarrow \lbrack 0,\infty )$ is called

1) comparison function if

i) $\varphi (r)<r$ for all $r>0$,

ii) $\varphi $ is an increasing function on $[0,\infty )$;

2) summable comparison function if $\varphi $ is a comparison function and $%
\displaystyle\sum_{n=0}^{\infty }\varphi ^{n}\left( r\right) $ is convergent
for every $r>0$;

3) right continuous function if $\displaystyle\lim_{\underset{r>r_{0}}{%
r\rightarrow r_{0}}}\varphi \left( r\right) =\varphi \left( r_{0}\right) $
for all $r_{0}\in \lbrack 0,\infty )$.
\end{definition}

\begin{remark}
If $\varphi :[0,\infty )\rightarrow \lbrack 0,\infty )$ is a summable or
right continuous comparison function, then%
\begin{equation*}
\varphi \left( 0\right) =0
\end{equation*}%
and%
\begin{equation}
\lim_{n\rightarrow \infty }\varphi ^{n}\left( r\right) =0  \label{filan}
\end{equation}%
for every $r>0$.
\end{remark}

\begin{definition}
\label{ficontr}Let $\left( X,d\right) $ be a metric space and $\varphi
:[0,\infty )\rightarrow \lbrack 0,\infty )$ a comparison function. A
function $f:X\rightarrow X$ is called $\varphi $-contraction if 
\begin{equation*}
d\left( f\left( x\right) ,f\left( y\right) \right) \leq \varphi \left(
d\left( x,y\right) \right)
\end{equation*}%
for every $x,y\in X$.
\end{definition}

\begin{theorem}
\label{thmficontr}Let $\left( X,d\right) $ be a complete metric space, $%
\varphi :[0,\infty )\rightarrow \lbrack 0,\infty )$ a right continuous
comparison function and $f:X\rightarrow X$ a $\varphi $-contraction. Then $f$
has a unique fixed point. If $\eta $ is the fixed point of $f$, then for
every $x_{0}\in X$\ the sequence $\left( f^{n}\left( x_{0}\right) \right)
_{n\in 
\mathbb{N}
}$ is convergent to $\eta $ and 
\begin{equation*}
d\left( f^{n}\left( x_{0}\right) ,\eta \right) \leq \varphi ^{n}\left(
d\left( x_{0},\eta \right) \right)
\end{equation*}%
for all $n\in 
\mathbb{N}
$.
\end{theorem}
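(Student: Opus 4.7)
The plan is to follow the classical Matkowski-style argument for $\varphi$-contractions, split into uniqueness, construction of the fixed point via the Picard iterates, and the error estimate.

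For uniqueness I would argue by contradiction: if $\eta_1\neq\eta_2$ were two fixed points, then $d(\eta_1,\eta_2)=d(f(\eta_1),f(\eta_2))\le\varphi(d(\eta_1,\eta_2))<d(\eta_1,\eta_2)$, which is impossible. For existence, fix $x_0\in X$ and set $x_n=f^{[n]}(x_0)$. The iterative estimate $d(x_{n+1},x_n)\le\varphi(d(x_n,x_{n-1}))$ combined with monotonicity of $\varphi$ yields $d(x_{n+1},x_n)\le\varphi^{n}(d(x_1,x_0))$, which tends to $0$ by (\ref{filan}). The crux of the proof is upgrading this to the Cauchy property without any summability assumption.

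The key step, and the main obstacle, is showing $(x_n)$ is Cauchy using only right continuity of $\varphi$. I would argue by contradiction: assuming the sequence is not Cauchy, there exists $\varepsilon>0$ such that for every $N$ one can find $m>n\ge N$ with $d(x_n,x_m)>\varepsilon$. For each $n$ large enough pick the smallest such $m=m(n)>n$, so that $d(x_n,x_{m(n)-1})\le\varepsilon$ and
\begin{equation*}
\varepsilon<d(x_n,x_{m(n)})\le d(x_n,x_{m(n)-1})+d(x_{m(n)-1},x_{m(n)})\le\varepsilon+d(x_{m(n)-1},x_{m(n)}).
\end{equation*}
Writing $t_n=d(x_n,x_{m(n)})$, the above shows $t_n\to\varepsilon$ from above. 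The triangle inequality also gives $t_n\le d(x_n,x_{n+1})+d(x_{n+1},x_{m(n)+1})+d(x_{m(n)+1},x_{m(n)})$, so
\begin{equation*}
t_n-\bigl(d(x_n,x_{n+1})+d(x_{m(n)},x_{m(n)+1})\bigr)\le d(x_{n+1},x_{m(n)+1})\le\varphi(t_n).
\end{equation*}
Letting $n\to\infty$ and invoking that $\varphi$ is increasing and right continuous at $\varepsilon$ (so $\varphi(t_n)\to\varphi(\varepsilon)$ since $t_n\downarrow\varepsilon$) gives $\varepsilon\le\varphi(\varepsilon)<\varepsilon$, a contradiction. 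Thus $(x_n)$ is Cauchy.

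By completeness of $X$, $x_n\to\eta$ for some $\eta\in X$. Continuity of $f$ (which follows from $d(f(x),f(y))\le\varphi(d(x,y))$ together with $\varphi(0)=0$ and right continuity of $\varphi$ at $0$) then gives $f(\eta)=\lim f(x_n)=\lim x_{n+1}=\eta$, so $\eta$ is a fixed point, and by uniqueness it is the unique one. Finally, the error estimate $d(f^{[n]}(x_0),\eta)\le\varphi^{n}(d(x_0,\eta))$ is obtained by a straightforward induction on $n$, starting from $d(f(x_0),\eta)=d(f(x_0),f(\eta))\le\varphi(d(x_0,\eta))$ and using the fact that $\varphi$ is increasing to propagate the inequality through the iterate.
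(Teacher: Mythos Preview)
The paper states this theorem in the Preliminaries section without proof; it is quoted as a known result (essentially Matkowski's fixed point theorem for $\varphi$-contractions), so there is no proof in the paper to compare against.

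Your argument is correct and is precisely the classical Matkowski-style proof one would expect here. Two minor cosmetic points: when you write ``for each $n$ large enough pick the smallest such $m=m(n)>n$'', strictly speaking the not-Cauchy assumption only guarantees such pairs along a subsequence of indices $n$, so the argument should be phrased along that subsequence (the estimates are unaffected). Also, $t_n$ need not be monotone, so ``$t_n\downarrow\varepsilon$'' should simply read ``$t_n\to\varepsilon$ with $t_n>\varepsilon$''; right continuity of $\varphi$ at $\varepsilon$ still yields $\varphi(t_n)\to\varphi(\varepsilon)$, so the contradiction $\varepsilon\le\varphi(\varepsilon)<\varepsilon$ goes through. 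Neither point affects the validity of the proof.
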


\begin{definition}
\label{iifs1}Let $\left( X,d\right) $ be a complete metric space and $\left(
f_{i}\right) _{i\in I}$ an infinite family of functions, where $%
f_{i}:X\rightarrow X$. The pair denoted by $\mathcal{S}=\left( \left(
X,d\right) ,\left( f_{i}\right) _{i\in I}\right) $ is called infinite
iterated function system (IIFS) if

i) $f_{i}:X\rightarrow X$ is a continuous function for every $i\in I$,

ii) the family $\left( f_{i}\right) _{i\in I}$ is equal uniformly continuous
on bounded sets, i.e. for every $B\in P_{b}\left( X\right) $ and every $%
\varepsilon >0$ there exists $\delta _{\varepsilon ,B}>0$ such that for all $%
x,y\in B$ with $d\left( x,y\right) <\delta _{\varepsilon ,B}$ we have $%
d\left( f_{i}\left( x\right) ,f_{i}\left( y\right) \right) <\varepsilon $,
for all $i\in I$,

iii) $\left( f_{i}\right) _{i\in I}$ is a bounded family of functions.

Given an IIFS\ $\mathcal{S}=\left( \left( X,d\right) ,\left( f_{i}\right)
_{i\in I}\right) $ we consider the fractal operator $F_{\mathcal{S}%
}:P_{b}\left( X\right) \rightarrow P_{b}\left( X\right) $ defined by 
\begin{equation*}
F_{\mathcal{S}}\left( B\right) =\overline{\cup _{i\in I}f_{i}\left( B\right) 
}
\end{equation*}%
for every $B\in P_{b}\left( X\right) $. The restriction of $F_{\mathcal{S}}$
to $P_{cl,b}\left( X\right) $ will still be denoted by $F_{\mathcal{S}}$.
\end{definition}

\begin{notation}
For a set $B\in P_{b}\left( X\right) $, by the orbit of $B$ we mean the set $%
\mathcal{O}\left( B\right) $ $=\displaystyle\cup _{n\in 
\mathbb{N}
}F_{\mathcal{S}}^{[n]}\left( B\right) $. If $B=\left\{ x\right\} $, we
denote its orbit by $\mathcal{O}\left( x\right) $.
\end{notation}

\begin{definition}
\label{iifs}Let $\mathcal{S}=\left( \left( X,d\right) ,\left( f_{i}\right)
_{i\in I}\right) $ be an IIFS. $\mathcal{S}$ is called

1) $\varphi $-contractive parent-child infinite iterated function system
(pcIIFS) if $\varphi :[0,\infty )\rightarrow \lbrack 0,\infty )$ is a
summable comparison function and 
\begin{equation}
d\left( f_{\omega }\left( x\right) ,f_{\omega i}\left( x\right) \right) \leq
\varphi ^{|\omega |}\left( d\left( x,f_{i}\left( x\right) \right) \right) 
\text{,}  \label{pc}
\end{equation}%
for every $i\in I$, $\omega \in \Lambda ^{\ast }\left( I\right) $ and $x\in
X $;

2) orbital $\varphi $-contractive infinite iterated function system (oIIFS)
if $\varphi :[0,\infty )\rightarrow \lbrack 0,\infty )$ is a
right-continuous comparison function and%
\begin{equation*}
d\left( f_{i}\left( y\right) ,f_{i}\left( z\right) \right) \leq \varphi
\left( d\left( y,z\right) \right)
\end{equation*}%
for every $i\in I$, $x\in X$ and $y,z\in \mathcal{O}\left( x\right) $.
\end{definition}

\begin{remark}
If $\mathcal{S}=\left( \left( X,d\right) ,\left( f_{i}\right) _{i\in
I}\right) $ is an oIIFS, then%
\begin{equation}
d\left( f_{\omega }\left( y\right) ,f_{\omega }\left( z\right) \right) \leq
\varphi ^{|\omega |}\left( d\left( y,z\right) \right) \text{,}  \label{o}
\end{equation}%
for every $\omega \in \Lambda ^{\ast }\left( I\right) $, $x\in X$ and $%
y,z\in \mathcal{O}\left( x\right) $.
\end{remark}

\begin{remark}
\label{MB}If $B\in P_{b}\left( X\right) $ and $\left( B_{n}\right) _{n\in 
\mathbb{N}
}$ $\subset P_{b}\left( X\right) $ such that $\displaystyle%
\lim_{n\rightarrow \infty }h\left( B_{n},B\right) =0$, we deduce that the
sequence $\left( B_{n}\right) _{n}$ is bounded. Therefore, there exists a
set $M\in P_{b}\left( X\right) $ such that $\left( \displaystyle\cup _{n\in 
\mathbb{N}
}B_{n}\right) \cup B\subset M$.

Let $\mathcal{S}=\left( \left( X,d\right) ,\left( f_{i}\right) _{i\in
I}\right) $ be an IIFS. Using ii) and iii) from Definition \ref{iifs1} we
deduce that the family $\left( f_{\alpha }\right) _{\alpha \in \Lambda
_{n}(I)}$ is equal uniformly continuous on $M$ and as a consequence $F_{%
\mathcal{S}}^{n}$ is uniformly continuous on $P_{b}(M)$ for every $n\in 
\mathbb{N}
$.
\end{remark}

\section{$\protect\varphi -${\protect\Large Contractive parent-child
infinite iterated function systems (pcIIFSs)}}

\begin{theorem}
Let $\mathcal{S}=\left( \left( X,d\right) ,\left( f_{i}\right) _{i\in
I}\right) $ be a pcIIFS. Then $F_{\mathcal{S}}$ is a weakly Picard operator.
More precisely, for every $B\in P_{cl,b}\left( X\right) $ there exists $%
A_{B}\in P_{cl,b}\left( X\right) $ such that $\displaystyle%
\lim_{n\rightarrow \infty }F_{\mathcal{S}}^{n}\left( B\right) =A_{B}$ and $%
F_{\mathcal{S}}\left( A_{B}\right) =A_{B}$. Moreover, 
\begin{equation}
h\left( F_{\mathcal{S}}^{n}\left( B\right) ,A_{B}\right) \leq \sum_{k\geq
n}\varphi ^{k}\left( diam\left( B\cup F_{\mathcal{S}}\left( B\right) \right)
\right)  \label{AB}
\end{equation}%
for all $n\in 
\mathbb{N}
$.
\end{theorem}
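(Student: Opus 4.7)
The plan is to show that the sequence $(F_{\mathcal{S}}^{n}(B))_n$ is Cauchy in the complete metric space $(P_{cl,b}(X),h)$ (Proposition \ref{complet}) by controlling the one-step jump $h(F_{\mathcal{S}}^{n}(B),F_{\mathcal{S}}^{n+1}(B))$ via the parent–child bound (\ref{pc}), and then to identify its limit as a fixed point through continuity of $F_{\mathcal{S}}$.

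First, combining continuity of each $f_{i}$ with the identity (\ref{hpeinch}), I would verify by induction on $n$ that
\[
F_{\mathcal{S}}^{n}(B)=\overline{\textstyle\bigcup_{\omega\in\Lambda_{n}(I)}f_{\omega}(B)}\quad\text{and}\quad F_{\mathcal{S}}^{n+1}(B)=\overline{\textstyle\bigcup_{\omega\in\Lambda_{n}(I)}\bigcup_{i\in I}f_{\omega i}(B)}.
\]
Applying (\ref{hpeinch}) and then (\ref{sup}) with indexing set $\Lambda_{n}(I)$ reduces the estimate to
\[
h\bigl(F_{\mathcal{S}}^{n}(B),F_{\mathcal{S}}^{n+1}(B)\bigr)\le\sup_{\omega\in\Lambda_{n}(I)}h\Bigl(f_{\omega}(B),\bigcup_{i\in I}f_{\omega i}(B)\Bigr).
\]
Now set $D=diam(B\cup F_{\mathcal{S}}(B))$, which is finite. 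Fix $\omega\in\Lambda_{n}(I)$. For each $x\in B$, picking any $i_{0}\in I$ one has $f_{\omega i_{0}}(x)\in\bigcup_{i}f_{\omega i}(B)$, and from (\ref{pc}) together with monotonicity of $\varphi$,
\[
d\bigl(f_{\omega}(x),f_{\omega i_{0}}(x)\bigr)\le\varphi^{|\omega|}\bigl(d(x,f_{i_{0}}(x))\bigr)\le\varphi^{n}(D),
\]
since $x\in B$ and $f_{i_{0}}(x)\in F_{\mathcal{S}}(B)$. The reverse direction is symmetric: for every $y=f_{\omega i}(x)\in\bigcup_{i}f_{\omega i}(B)$, the parent $f_{\omega}(x)$ lies in $f_{\omega}(B)$ and the same computation yields $d(y,f_{\omega}(B))\le\varphi^{n}(D)$. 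Hence $h(F_{\mathcal{S}}^{n}(B),F_{\mathcal{S}}^{n+1}(B))\le\varphi^{n}(D)$.

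By the triangle inequality, for $m>n$ one obtains $h(F_{\mathcal{S}}^{n}(B),F_{\mathcal{S}}^{m}(B))\le\sum_{k=n}^{m-1}\varphi^{k}(D)$; summability of $\varphi$ makes the sequence Cauchy, so by Proposition \ref{complet} there is $A_{B}\in P_{cl,b}(X)$ with $F_{\mathcal{S}}^{n}(B)\to A_{B}$, and letting $m\to\infty$ in the partial-sum bound delivers (\ref{AB}). For the fixed-point property, since $(F_{\mathcal{S}}^{n}(B))_{n}$ converges it is bounded, so by Remark \ref{MB} there is $M\in P_{b}(X)$ containing the orbit and $A_{B}$ on which $F_{\mathcal{S}}$ is uniformly continuous; applying $F_{\mathcal{S}}$ to $F_{\mathcal{S}}^{n}(B)\to A_{B}$ gives $F_{\mathcal{S}}^{n+1}(B)\to F_{\mathcal{S}}(A_{B})$, and uniqueness of limits forces $F_{\mathcal{S}}(A_{B})=A_{B}$.

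The main point of friction is the single-step Hausdorff bound on $h(f_{\omega}(B),\bigcup_{i}f_{\omega i}(B))$: inequality (\ref{pc}) is stated pointwise in $x$ and depends on the internal distance $d(x,f_{i}(x))$, so to extract an estimate independent of $\omega$, $x$, and $i$ one must majorize all such parent–child distances by the single constant $D=diam(B\cup F_{\mathcal{S}}(B))$ and then invoke monotonicity of $\varphi$ to move the supremum inside $\varphi^{n}$. Everything else — the Cauchy argument, identification of the limit, and passage to the bound (\ref{AB}) — is a routine consequence of Propositions \ref{h}--\ref{complet} and Remark \ref{MB}.
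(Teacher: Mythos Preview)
Your proof is correct and follows essentially the same route as the paper: bound the one-step jump $h(F_{\mathcal{S}}^{n}(B),F_{\mathcal{S}}^{n+1}(B))\le\varphi^{n}(diam(B\cup F_{\mathcal{S}}(B)))$ via the parent--child inequality (\ref{pc}) and monotonicity of $\varphi$, sum to get Cauchy, and use continuity of $F_{\mathcal{S}}$ on a bounded container for the fixed-point identification. The only cosmetic difference is that the paper first treats singletons $\{x\}$ and then takes $\sup_{x\in B}$, whereas you decompose directly over $\omega\in\Lambda_{n}(I)$; and for continuity the paper explicitly verifies boundedness of $\mathcal{O}(B)$, while you invoke Remark~\ref{MB} on the convergent sequence---both arguments are equivalent.
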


\begin{proof}
We consider $F_{\mathcal{S}}$ defined on $P_{cl,b}\left( X\right) $ but it
can be generalized to $P_{b}\left( X\right) $, as the image of a bounded set 
$B$ is a closed set.

Taking into consideration iii) from Definition \ref{iifs1} we deduce that
for each $x\in X$, $\displaystyle\sup_{i\in I}d\left( x,f_{i}\left( x\right)
\right) $ is finite. Let $B\in P_{cl,b}\left( X\right) $. We have 
\begin{equation}
\sup_{x\in B}\left( \sup_{i\in I}d\left( x,f_{i}\left( x\right) \right)
\right) \leq diam\left( B\cup F_{\mathcal{S}}\left( B\right) \right) \text{,}
\label{sup2}
\end{equation}%
so $\displaystyle\sup_{x\in B}\left( \displaystyle\sup_{i\in I}d\left(
x,f_{i}\left( x\right) \right) \right) $ is finite.

\begin{claim}
\begin{equation}
h\left( F_{\mathcal{S}}^{n}\left( \left\{ x\right\} \right) ,F_{\mathcal{S}%
}^{n+1}\left( \left\{ x\right\} \right) \right) \leq \varphi ^{n}\left(
\sup_{i\in I}d\left( x,f_{i}\left( x\right) \right) \right)  \label{cauchy1}
\end{equation}%
for all $n\in 
\mathbb{N}
$ and $x\in B$.
\end{claim}

Justification: We have%
\begin{equation*}
h\left( F_{\mathcal{S}}^{n}\left( \left\{ x\right\} \right) ,F_{\mathcal{S}%
}^{n+1}\left( \left\{ x\right\} \right) \right) =h\left( \cup _{\alpha \in
\Lambda _{n}\left( I\right) }\left\{ f_{\alpha }\left( x\right) \right\}
,\cup _{\alpha \in \Lambda _{n}\left( I\right) }f_{\alpha }\left( F_{%
\mathcal{S}}\left( \left\{ x\right\} \right) \right) \right)
\end{equation*}%
\begin{equation*}
\overset{(\ref{sup})}{\leq }\sup_{\alpha \in \Lambda _{n}\left( I\right)
}h\left( \left\{ f_{\alpha }\left( x\right) \right\} ,f_{\alpha }\left( F_{%
\mathcal{S}}\left( \left\{ x\right\} \right) \right) \right) =\sup_{\alpha
\in \Lambda _{n}\left( I\right) }h\left( \cup _{i\in I}\left\{ f_{\alpha
}\left( x\right) \right\} ,\cup _{i\in I}\left\{ f_{\alpha }\left(
f_{i}\left( x\right) \right) \right\} \right)
\end{equation*}%
\begin{equation*}
\overset{(\ref{sup})}{\leq }\sup_{\alpha \in \Lambda _{n}\left( I\right)
}\sup_{i\in I}h\left( \left\{ f_{\alpha }\left( x\right) \right\} ,\left\{
f_{\alpha }\left( f_{i}\left( x\right) \right) \right\} \right) \overset{%
\text{Definition }\ref{iifs}\text{ }1)}{\leq }\sup_{i\in I}\varphi
^{n}\left( d\left( x,f_{i}\left( x\right) \right) \right)
\end{equation*}%
\begin{equation*}
\overset{\text{Definition }\ref{compfunc}\text{ }1)}{\leq }\varphi
^{n}\left( \sup_{i\in I}d\left( x,f_{i}\left( x\right) \right) \right)
\end{equation*}%
for all $n\in 
\mathbb{N}
$. Thus,%
\begin{equation*}
h\left( F_{\mathcal{S}}^{n}\left( \left\{ x\right\} \right) ,F_{\mathcal{S}%
}^{n+1}\left( \left\{ x\right\} \right) \right) \leq \varphi ^{n}\left(
\sup_{i\in I}d\left( x,f_{i}\left( x\right) \right) \right)
\end{equation*}%
for all $n\in 
\mathbb{N}
$.

\begin{claim}
$\left( F_{\mathcal{S}}^{n}\left( B\right) \right) _{n\in 
\mathbb{N}
}$ is a Cauchy sequence. Moreover,%
\begin{equation}
h\left( F_{\mathcal{S}}^{m}\left( B\right) ,F_{\mathcal{S}}^{n}\left(
B\right) \right) \leq \sum_{k=m}^{n-1}\varphi ^{k}\left( diam\left( B\cup F_{%
\mathcal{S}}\left( B\right) \right) \right)  \label{fmn}
\end{equation}%
for all $B\in P_{cl,b}\left( X\right) $ and $m,n\in 
\mathbb{N}
$, $m<n$.\ 
\end{claim}

Justification: Let $B\in P_{cl,b}\left( X\right) $. We have 
\begin{equation*}
h\left( F_{\mathcal{S}}^{n}\left( B\right) ,F_{\mathcal{S}}^{n+1}\left(
B\right) \right) =h\left( \cup _{x\in B}F_{\mathcal{S}}^{n}\left( \left\{
x\right\} \right) ,\cup _{x\in B}F_{\mathcal{S}}^{n+1}\left( \left\{
x\right\} \right) \right)
\end{equation*}%
\begin{equation*}
\overset{\text{ }(\ref{sup})\text{ }}{\leq }\sup_{x\in B}h\left( F_{\mathcal{%
S}}^{n}\left( \left\{ x\right\} \right) ,F_{\mathcal{S}}^{n+1}\left( \left\{
x\right\} \right) \right) \overset{(\ref{cauchy1})}{\leq }\sup_{x\in
B}\varphi ^{n}\left( \sup_{i\in I}d\left( x,f_{i}\left( x\right) \right)
\right)
\end{equation*}%
\begin{equation*}
\overset{\text{Definition }\ref{compfunc}\text{ }1)}{\leq }\varphi
^{n}\left( \sup_{x\in B}\sup_{i\in I}d\left( x,f_{i}\left( x\right) \right)
\right) \overset{\text{ }(\ref{sup2})\text{ }}{\leq }\varphi ^{n}\left(
diam\left( B\cup F_{\mathcal{S}}\left( B\right) \right) \right)
\end{equation*}%
for each $n\in 
\mathbb{N}
$. So, 
\begin{equation}
h\left( F_{\mathcal{S}}^{n}\left( B\right) ,F_{\mathcal{S}}^{n+1}\left(
B\right) \right) \leq \varphi ^{n}\left( diam\left( B\cup F_{\mathcal{S}%
}\left( B\right) \right) \right)  \label{cauchy2}
\end{equation}%
for each $n\in 
\mathbb{N}
$. Let us now consider $m,n\in 
\mathbb{N}
$, $m<n$. Applying triangle inequality and (\ref{cauchy2}) we obtain%
\begin{equation*}
h\left( F_{\mathcal{S}}^{m}\left( B\right) ,F_{\mathcal{S}}^{n}\left(
B\right) \right) \leq \sum_{k=m}^{n-1}\varphi ^{k}\left( diam\left( B\cup F_{%
\mathcal{S}}\left( B\right) \right) \right)
\end{equation*}%
for all $m,n\in 
\mathbb{N}
$, $m<n$. Using the above inequality and 2) from Definition \ref{compfunc}
we deduce that $\left( F_{\mathcal{S}}^{n}\left( B\right) \right) _{n\in 
\mathbb{N}
}$ is a Cauchy sequence.

\begin{claim}
$\overline{\mathcal{O}\left( C\right) }$ is bounded for every $C\in
P_{b}\left( X\right) $.
\end{claim}

Justification: We have 
\begin{equation*}
h\left( \overline{\mathcal{O}\left( C\right) },C\right) \overset{(\ref%
{hpeinch})}{=}h\left( \cup _{n\in 
\mathbb{N}
}F_{\mathcal{S}}^{n}\left( C\right) ,\cup _{n\in 
\mathbb{N}
}C\right) \overset{(\ref{sup})}{\leq }\sup_{n\in 
\mathbb{N}
}h\left( F_{\mathcal{S}}^{n}\left( C\right) ,C\right)
\end{equation*}%
\begin{equation*}
\overset{(\ref{fmn})}{\leq }\sup_{n\in 
\mathbb{N}
}\sum_{k=0}^{n-1}\varphi ^{k}\left( diam\left( C\cup F_{\mathcal{S}}\left(
C\right) \right) \right) =\sum_{k=0}^{\infty }\varphi ^{k}\left( diam\left(
C\cup F_{\mathcal{S}}\left( C\right) \right) \right) \text{.}
\end{equation*}

We deduce that $\overline{\mathcal{O}\left( C\right) }\subset B\left[ C,%
\displaystyle\sum_{k=0}^{\infty }\varphi ^{k}\left( diam\left( C\cup F_{%
\mathcal{S}}\left( C\right) \right) \right) \right] $ and according to 2)
from Definition \ref{compfunc}, we have that $\overline{\mathcal{O}\left(
C\right) }$ is bounded.

\begin{claim}
For every $B\in P_{b}\left( X\right) $, there exists $A_{B}\in
P_{cl,b}\left( X\right) $ such that $\displaystyle\lim_{n\rightarrow \infty
}F_{\mathcal{S}}^{n}\left( B\right) =A_{B}$ and $F_{\mathcal{S}}\left(
A_{B}\right) =A_{B}$.
\end{claim}

Justification: We have seen that $\left( F_{\mathcal{S}}^{n}\left( \left(
B\right) \right) \right) _{n\in 
\mathbb{N}
}$ is a Cauchy sequence. Applying Proposition \ref{complet} and iii) from
Definition \ref{iifs1} we deduce that there exists $A_{B}\in P_{cl,b}\left(
X\right) $ such that $\displaystyle\lim_{n\rightarrow \infty }F_{\mathcal{S}%
}^{n}\left( B\right) =A_{B}$. Since the family of functions $\left(
f_{i}\right) _{i\in I}$ is equal uniformly continuous on bounded sets and $%
\overline{\mathcal{O}\left( B\right) }$ is bounded, it results that $F_{%
\mathcal{S}}$ is continuous on $\overline{\mathcal{O}\left( B\right) }$, so $%
\displaystyle\lim_{n\rightarrow \infty }F_{\mathcal{S}}\left( F_{\mathcal{S}%
}^{n}\left( B\right) \right) =F_{\mathcal{S}}\left( A_{B}\right) $. We
obtain that $F_{\mathcal{S}}\left( A_{B}\right) =A_{B}$. Hence, we conclude
that $F_{\mathcal{S}}$ is a weakly Picard operator. By passing to limit as $%
n\rightarrow \infty $ in (\ref{fmn}) we deduce 
\begin{equation*}
h\left( F_{\mathcal{S}}^{m}\left( B\right) ,A_{B}\right) \leq \sum_{k\geq
m}\varphi ^{k}\left( diam\left( B\cup F_{\mathcal{S}}\left( B\right) \right)
\right)
\end{equation*}%
for all $m\in 
\mathbb{N}
$.
\end{proof}

\begin{remark}
If we consider $B=\left\{ x\right\} $, then there exists $A_{x}\in
P_{cl,b}\left( X\right) $ such that $\displaystyle\lim_{n\rightarrow \infty
}F_{\mathcal{S}}^{n}\left( \left\{ x\right\} \right) =A_{\left\{ x\right\} }$%
. For the sake of simplicity we will denote $A_{\left\{ x\right\} }$ by $%
A_{x}$. In this case,%
\begin{equation}
h\left( F_{\mathcal{S}}^{n}\left( \left\{ x\right\} \right) ,A_{x}\right)
\leq \sum_{k\geq n}\varphi ^{k}\left( diam\left( \left\{ x\right\} \cup F_{%
\mathcal{S}}\left( \left\{ x\right\} \right) \right) \right)  \label{Ax1}
\end{equation}%
for every $n\in 
\mathbb{N}
$.
\end{remark}

\begin{proposition}
\label{ABx}Let $\mathcal{S}=\left( \left( X,d\right) ,\left( f_{i}\right)
_{i\in I}\right) $ be a pcIIFS. Then $\displaystyle A_{B}=\overline{\cup
_{x\in B}A_{x}}$ for every $B\in P_{cl,b}\left( X\right) $.
\end{proposition}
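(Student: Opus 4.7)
The plan is to show that $F_{\mathcal{S}}^{n}(B)$ converges, in the Hausdorff-Pompeiu metric, both to $A_{B}$ (which is already known) and to $\overline{\cup _{x\in B}A_{x}}$; the desired equality will then follow from uniqueness of limits in the complete metric space $(P_{cl,b}(X),h)$.

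First I would establish the identity
\[
F_{\mathcal{S}}^{n}(B)=\overline{\cup _{x\in B}F_{\mathcal{S}}^{n}(\{x\})}
\]
for every $n\in \mathbb{N}$ and every $B\in P_{cl,b}(X)$, by induction on $n$. The inclusion $\supseteq$ follows from monotonicity of $F_{\mathcal{S}}^{n}$ (which gives $F_{\mathcal{S}}^{n}(\{x\})\subseteq F_{\mathcal{S}}^{n}(B)$ for every $x\in B$) together with closedness of $F_{\mathcal{S}}^{n}(B)$. For $\subseteq$, the inductive step rewrites $F_{\mathcal{S}}^{n+1}(B)=\overline{\cup _{i\in I}f_{i}(F_{\mathcal{S}}^{n}(B))}$, applies the induction hypothesis, and uses continuity of each $f_{i}$ to push the closure through $f_{i}$ and then distribute the union over the singletons $\{x\}$, $x\in B$.

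Second, I would check that $\cup _{x\in B}A_{x}\in P_{b}(X)$, so that (\ref{sup}) and (\ref{hpeinch}) become applicable. From (\ref{Ax1}) with $n=0$ one has $\sup _{y\in A_{x}}d(y,x)\leq \sum _{k=0}^{\infty }\varphi ^{k}(diam(\{x\}\cup F_{\mathcal{S}}(\{x\})))$. A direct triangle-inequality argument gives $diam(\{x\}\cup F_{\mathcal{S}}(\{x\}))\leq 2\sup _{i\in I}d(x,f_{i}(x))$, and by (\ref{sup2}) this is bounded uniformly in $x\in B$ by $2\,diam(B\cup F_{\mathcal{S}}(B))$. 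Monotonicity of $\varphi$ together with boundedness of $B$ then forces $\cup _{x\in B}A_{x}$ to be a bounded subset of $X$.

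Third, combining the previous two steps with (\ref{hpeinch}) and (\ref{sup}),
\[
h\bigl(F_{\mathcal{S}}^{n}(B),\overline{\cup _{x\in B}A_{x}}\bigr)=h\bigl(\cup _{x\in B}F_{\mathcal{S}}^{n}(\{x\}),\cup _{x\in B}A_{x}\bigr)\leq \sup _{x\in B}h\bigl(F_{\mathcal{S}}^{n}(\{x\}),A_{x}\bigr),
\]
and (\ref{Ax1}) together with the uniform diameter bound and monotonicity of $\varphi$ bound this supremum by $\sum _{k\geq n}\varphi ^{k}(2\,diam(B\cup F_{\mathcal{S}}(B)))$, which tends to $0$ as $n\to \infty$ by summability of $\varphi$. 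Since also $F_{\mathcal{S}}^{n}(B)\to A_{B}$, uniqueness of limits in $(P_{cl,b}(X),h)$ (Proposition \ref{complet}) yields $A_{B}=\overline{\cup _{x\in B}A_{x}}$. The main obstacle is the first step: the decomposition of $F_{\mathcal{S}}^{n}(B)$ as the closure of a union over singletons is intuitive, but the induction must carry the closure operations through each application of $F_{\mathcal{S}}$, which is precisely where the continuity of the $f_{i}$ is needed.
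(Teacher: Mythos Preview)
Your proposal is correct and follows essentially the same route as the paper: both show $h\bigl(F_{\mathcal{S}}^{n}(B),\overline{\cup_{x\in B}A_{x}}\bigr)\to 0$ via the decomposition $F_{\mathcal{S}}^{n}(B)=\overline{\cup_{x\in B}F_{\mathcal{S}}^{n}(\{x\})}$ together with (\ref{sup}) and (\ref{Ax1}), and then invoke uniqueness of limits. The paper simply uses the inclusion $\{x\}\cup F_{\mathcal{S}}(\{x\})\subseteq B\cup F_{\mathcal{S}}(B)$ to bound the diameter directly (avoiding your detour through $2\sup_{i}d(x,f_{i}(x))$ and the extra factor of~$2$), and it treats the decomposition identity as immediate rather than proving it by induction, but these are cosmetic differences.
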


\begin{proof}
Let us consider $B\in P_{cl,b}\left( X\right) $. We have 
\begin{equation*}
h\left( F_{\mathcal{S}}^{n}\left( B\right) ,\overline{\cup _{x\in B}A_{x}}%
\right) \overset{(\ref{hpeinch})}{=}h\left( \cup _{x\in B}F_{\mathcal{S}%
}^{n}\left( \left\{ x\right\} \right) ,\cup _{x\in B}A_{x}\right)
\end{equation*}%
\begin{equation*}
\overset{(\ref{sup})}{\leq }\sup_{x\in B}h\left( F_{\mathcal{S}}^{n}\left(
\left\{ x\right\} \right) ,A_{x}\right) \overset{(\ref{Ax1})}{\leq }%
\sup_{x\in B}\sum_{k\geq n}\varphi ^{k}\left( diam\left( \left\{ x\right\}
\cup F_{\mathcal{S}}\left( \left\{ x\right\} \right) \right) \right)
\end{equation*}%
\begin{equation*}
\overset{\text{Definition }\ref{compfunc}\text{ }1)}{\leq }\sum_{k\geq
n}\varphi ^{k}\left( diam\left( B\cup F_{\mathcal{S}}\left( B\right) \right)
\right)
\end{equation*}%
for all $n\in 
\mathbb{N}
$. Hence, 
\begin{equation}
h\left( F_{\mathcal{S}}^{n}\left( B\right) ,\overline{\cup _{x\in B}A_{x}}%
\right) \leq \sum_{k\geq n}\varphi ^{k}\left( diam\left( B\cup F_{\mathcal{S}%
}\left( B\right) \right) \right)  \label{propA_B1}
\end{equation}%
for all $n\in 
\mathbb{N}
$. We deduce%
\begin{equation*}
h\left( A_{B},\overline{\cup _{x\in B}A_{x}}\right) \leq h\left( A_{B},F_{%
\mathcal{S}}^{n}\left( B\right) \right) +h\left( F_{\mathcal{S}}^{n}\left(
B\right) ,\overline{\cup _{x\in B}A_{x}}\right)
\end{equation*}%
\begin{equation*}
\overset{(\ref{AB}),(\ref{propA_B1})}{\leq }2\cdot \sum_{k\geq n}\varphi
^{k}\left( diam\left( B\cup F_{\mathcal{S}}\left( B\right) \right) \right) 
\text{,}
\end{equation*}%
so%
\begin{equation*}
h\left( A_{B},\overline{\cup _{x\in B}A_{x}}\right) \leq 2\cdot \sum_{k\geq
n}\varphi ^{k}\left( diam\left( B\cup F_{\mathcal{S}}\left( B\right) \right)
\right)
\end{equation*}%
for all $n\in 
\mathbb{N}
$. By passing to limit as $n\rightarrow \infty $ and applying 2) from
Definition \ref{compfunc} we conclude that $A_{B}=\overline{\displaystyle%
\cup _{x\in B}A_{x}}$.
\end{proof}

The following result shows that $F_{\mathcal{S}}$ is a continuous weakly
Picard operator.

\begin{proposition}
\label{Fcont}Let $\mathcal{S}=\left( \left( X,d\right) ,\left( f_{i}\right)
_{i\in I}\right) $ be a pcIIFS and let $F:P_{cl,b}\left( X\right)
\rightarrow P_{cl,b}\left( X\right) $ defined by $F\left( B\right) =A_{B}$,
for every $B\in P_{cl,b}\left( X\right) $. Then $F$ is continuous.
\end{proposition}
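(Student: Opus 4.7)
The plan is to exploit the quantitative estimate (\ref{AB}) proved in the previous theorem to approximate $A_B$ by a finite iterate $F_{\mathcal{S}}^m(B)$, and then invoke the continuity of $F_{\mathcal{S}}^m$ (from Remark \ref{MB}) for the finite-depth part of the argument. This is the classical split: $F = F_{\mathcal{S}}^m + \text{tail}$, where the tail is uniformly small.

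In detail, I would fix $B \in P_{cl,b}(X)$ and a sequence $(B_n) \subset P_{cl,b}(X)$ with $\displaystyle\lim_{n\rightarrow \infty} h(B_n, B) = 0$. By Remark \ref{MB} the sequence $(B_n)$ is bounded, so there exists $M \in P_b(X)$ with $B \cup \bigcup_{n} B_n \subset M$. Condition iii) of Definition \ref{iifs1} then gives that $\overline{\cup_{i \in I} f_i(M)} \in P_b(X)$, so the set $M \cup \overline{\cup_{i\in I} f_i(M)}$ has finite diameter $D$. Since each $B_n \cup F_{\mathcal{S}}(B_n)$ and $B \cup F_{\mathcal{S}}(B)$ lie in this set, we get the uniform bound $\mathrm{diam}(B_n \cup F_{\mathcal{S}}(B_n)) \leq D$ and $\mathrm{diam}(B \cup F_{\mathcal{S}}(B)) \leq D$. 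Estimate (\ref{AB}) then yields
\begin{equation*}
h(F_{\mathcal{S}}^m(B_n), A_{B_n}) \leq \sum_{k \geq m} \varphi^k(D), \qquad h(F_{\mathcal{S}}^m(B), A_B) \leq \sum_{k \geq m} \varphi^k(D),
\end{equation*}
for every $m, n \in \mathbb{N}$.

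Given $\varepsilon > 0$, I would use 2) of Definition \ref{compfunc} to pick $m$ so large that $\sum_{k \geq m} \varphi^k(D) < \varepsilon / 3$. With this $m$ now fixed, Remark \ref{MB} provides that $F_{\mathcal{S}}^m$ is uniformly continuous on $P_b(M)$, so there exists $N$ with $h(F_{\mathcal{S}}^m(B_n), F_{\mathcal{S}}^m(B)) < \varepsilon / 3$ for all $n \geq N$. The triangle inequality then gives
\begin{equation*}
h(A_{B_n}, A_B) \leq h(A_{B_n}, F_{\mathcal{S}}^m(B_n)) + h(F_{\mathcal{S}}^m(B_n), F_{\mathcal{S}}^m(B)) + h(F_{\mathcal{S}}^m(B), A_B) < \varepsilon
\end{equation*}
for $n \geq N$, which is continuity of $F$ at $B$.

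The main obstacle I anticipate is bookkeeping the uniform diameter bound $D$, since the estimate (\ref{AB}) controls the tail only in terms of a quantity depending on the initial set; without a uniform bound across the sequence $(B_n)$, the tail estimate could not be made small simultaneously. That is precisely where the boundedness assumption iii) in Definition \ref{iifs1} is used. Everything else is a routine $\varepsilon/3$ argument leveraging the summability of $\varphi$ and the continuity of finite iterates.
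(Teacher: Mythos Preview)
Your proposal is correct and follows essentially the same approach as the paper: both proofs split $h(A_{B_n},A_B)$ via the triangle inequality through $F_{\mathcal{S}}^m(B_n)$ and $F_{\mathcal{S}}^m(B)$, use estimate (\ref{AB}) together with a uniform diameter bound coming from a common $M\in P_b(X)$ (Remark \ref{MB}) to control the two tail terms, and invoke the (uniform) continuity of $F_{\mathcal{S}}^m$ on $P_b(M)$ for the middle term. Your write-up is in fact slightly more careful, making the $\varepsilon/3$ structure and the role of condition iii) in Definition \ref{iifs1} explicit, whereas the paper compresses this into a single ``passing to limit as $n,m\to\infty$'' step.
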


\begin{proof}
Let $\left( B_{n}\right) _{n\in 
\mathbb{N}
}\subset P_{cl,b}\left( X\right) $ and $B\in P_{cl,b}\left( X\right) $ such
that $\displaystyle\lim_{n\rightarrow \infty }h\left( B_{n},B\right) =0$.
Then,%
\begin{equation*}
h\left( F\left( B_{n}\right) ,F\left( B\right) \right) =h\left(
A_{B_{n}},A_{B}\right) \leq h\left( A_{B_{n}},F_{\mathcal{S}}^{m}\left(
B_{n}\right) \right)
\end{equation*}%
\begin{equation*}
+h\left( F_{\mathcal{S}}^{m}\left( B_{n}\right) ,F_{\mathcal{S}}^{m}\left(
B\right) \right) +h\left( F_{\mathcal{S}}^{m}\left( B\right) ,A_{B}\right)
\end{equation*}%
for each $m,n\in 
\mathbb{N}
$. Using (\ref{AB}) we have%
\begin{equation*}
h\left( A_{B_{n}},F_{\mathcal{S}}^{m}\left( B_{n}\right) \right) \leq
\sum_{k\geq m}\varphi ^{k}\left( diam\left( B_{n}\cup F_{\mathcal{S}}\left(
B_{n}\right) \right) \right)
\end{equation*}%
for each $m,n\in 
\mathbb{N}
$ and%
\begin{equation*}
h\left( F_{\mathcal{S}}^{m}\left( B\right) ,A_{B}\right) \leq \sum_{k\geq
m}\varphi ^{k}\left( diam\left( B\cup F_{\mathcal{S}}\left( B\right) \right)
\right)
\end{equation*}%
for every $m\in 
\mathbb{N}
$. As $\displaystyle\lim_{n\rightarrow \infty }h\left( B_{n},B\right) =0$,
using Remark \ref{MB} there exists a set $M\in P_{b}\left( X\right) $ such
that $\left( \displaystyle\cup _{n\in 
\mathbb{N}
}B_{n}\right) \cup B\subset M$. Thus, $diam\left( B_{n}\cup F_{\mathcal{S}%
}\left( B_{n}\right) \right) \leq diam\left( M\cup F_{\mathcal{S}}\left(
M\right) \right) $ for all $n\in 
\mathbb{N}
$ and $diam\left( B\cup F_{\mathcal{S}}\left( B\right) \right) \leq
diam\left( M\cup F_{\mathcal{S}}\left( M\right) \right) $. It results that%
\begin{equation*}
h\left( A_{B_{n}},A_{B}\right) \leq 2\cdot \sum_{k\geq m}\varphi ^{k}\left(
diam\left( M_{B}\cup F_{\mathcal{S}}\left( M_{B}\right) \right) \right)
+h\left( F_{\mathcal{S}}^{m}\left( B_{n}\right) ,F_{\mathcal{S}}^{m}\left(
B\right) \right)
\end{equation*}%
for every $m,n\in 
\mathbb{N}
$. Using the fact that $\displaystyle\lim_{n\rightarrow \infty }h\left(
B_{n},B\right) =0$ and $F_{\mathcal{S}}$ is continuous on bounded sets, we
infer that $\displaystyle\lim_{n\rightarrow \infty }h\left( F_{\mathcal{S}%
}^{m}\left( B_{n}\right) ,F_{\mathcal{S}}^{m}\left( B\right) \right) =0$.
From this relation and the above inequality by passing to limit as $%
n,m\rightarrow \infty $ we obtain that $\displaystyle\lim_{n\rightarrow
\infty }h\left( A_{B_{n}},A_{B}\right) =0$, so the function $F$ is
continuous.
\end{proof}

\begin{remark}
The restriction of $F$ to $P_{cl,b}\left( B\right) $ is a uniformly
continuous function, for all $B\in P_{cl,b}\left( X\right) $.
\end{remark}

\begin{proposition}
\label{aalfab}Let $\mathcal{S}=\left( \left( X,d\right) ,\left( f_{i}\right)
_{i\in I}\right) $ be a pcIIFS. Then, for every $B\in P_{b}\left( X\right) $
and $\alpha \in \Lambda \left( I\right) $, the sequence $\left( \overline{%
f_{[\alpha ]_{n}}\left( B\right) }\right) _{n}$ is convergent. If we denote
by $a_{\alpha }\left( B\right) =\displaystyle\lim_{n\rightarrow \infty }%
\overline{f_{[\alpha ]_{n}}\left( B\right) }$, then 
\begin{equation}
h\left( \overline{f_{[\alpha ]_{m}}\left( B\right) },a_{\alpha }\left(
B\right) \right) =h\left( f_{[\alpha ]_{m}}\left( B\right) ,a_{\alpha
}\left( B\right) \right) \leq \sum_{k=m}^{\infty }\varphi ^{k}\left(
diam\left( \mathcal{O}\left( B\right) \right) \right)  \label{aalfa}
\end{equation}%
for all $m\in 
\mathbb{N}
$.
\end{proposition}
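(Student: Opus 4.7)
My plan is to mimic the Cauchy-sequence argument used in Claims 1--3 of the main theorem above: establish a sharp one-step estimate $h(f_{[\alpha]_n}(B), f_{[\alpha]_{n+1}}(B)) \leq \varphi^n(diam(\mathcal{O}(B)))$, telescope it via the triangle inequality, and invoke completeness of $(P_{cl,b}(X), h)$ (Proposition \ref{complet}) to extract the limit $a_\alpha(B)$. The target bound (\ref{aalfa}) will then fall out by letting $n \to \infty$ in the telescoped estimate.

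The one-step estimate is where the parent-child condition (\ref{pc}) is exploited. Since $[\alpha]_{n+1} = [\alpha]_n\alpha_{n+1}$, applying (\ref{pc}) with $\omega = [\alpha]_n$ and $i = \alpha_{n+1}$ gives, for every $x \in B$,
\[ d(f_{[\alpha]_n}(x), f_{[\alpha]_{n+1}}(x)) \leq \varphi^n(d(x, f_{\alpha_{n+1}}(x))). \]
Writing $f_{[\alpha]_n}(B) = \cup_{x \in B}\{f_{[\alpha]_n}(x)\}$ and similarly for $f_{[\alpha]_{n+1}}(B)$, applying (\ref{sup}) and then monotonicity of $\varphi$ yields
\[ h(f_{[\alpha]_n}(B), f_{[\alpha]_{n+1}}(B)) \leq \sup_{x \in B} \varphi^n(d(x, f_{\alpha_{n+1}}(x))) \leq \varphi^n(diam(\mathcal{O}(B))), \]
where the last inequality uses that both $x$ and $f_{\alpha_{n+1}}(x)$ lie in $\mathcal{O}(B)$. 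Finiteness of $diam(\mathcal{O}(B))$ is provided by Claim 3 of the previous theorem.

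The triangle inequality now gives $h(f_{[\alpha]_m}(B), f_{[\alpha]_n}(B)) \leq \sum_{k=m}^{n-1} \varphi^k(diam(\mathcal{O}(B)))$ for $m < n$, and summability of $\sum_k \varphi^k(r)$ from Definition \ref{compfunc} shows $(f_{[\alpha]_n}(B))_n$ is Cauchy in $(P_b(X), h)$; by (\ref{hpeinch}) so is $(\overline{f_{[\alpha]_n}(B)})_n$ in $(P_{cl,b}(X), h)$. Proposition \ref{complet} then supplies a limit $a_\alpha(B) \in P_{cl,b}(X)$, and passing to the limit $n \to \infty$ in the telescoped bound yields (\ref{aalfa}); the equality $h(\overline{f_{[\alpha]_m}(B)}, a_\alpha(B)) = h(f_{[\alpha]_m}(B), a_\alpha(B))$ holds by (\ref{hpeinch}) since $a_\alpha(B)$ is already closed. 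I expect no serious obstacle: the argument is entirely parallel to the main theorem's. The one conceptual point worth emphasizing is that the pairing $x \leftrightarrow x$ (rather than matching an arbitrary pair of points in the two images) is what lets us transfer the pointwise parent-child estimate directly to the set level, bypassing any appeal to uniform continuity of $f_{[\alpha]_n}$.
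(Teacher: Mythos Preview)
Your proposal is correct and follows essentially the same route as the paper: a one-step parent-child estimate (\ref{pc}) lifted to sets via (\ref{sup}), telescoped by the triangle inequality, with completeness (Proposition \ref{complet}) yielding the limit and the tail bound (\ref{aalfa}). The only cosmetic difference is that the paper passes through the intermediate bounds $\sup_{i\in I} d(x,f_i(x))$ and $diam(\mathcal{O}(x))$ before reaching $diam(\mathcal{O}(B))$, whereas you go there directly; both are equally valid applications of the monotonicity of $\varphi$.
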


\begin{proof}
Let $B\in P_{b}\left( X\right) $. We have%
\begin{equation*}
h\left( \overline{f_{[\alpha ]_{n}}\left( B\right) },\overline{f_{[\alpha
]_{n+1}}\left( B\right) }\right) =h\left( \overline{\cup _{x\in B}\left\{
f_{[\alpha ]_{n}}\left( x\right) \right\} },\overline{\cup _{x\in B}\left\{
f_{[\alpha ]_{n+1}}\left( x\right) \right\} }\right)
\end{equation*}%
\begin{equation*}
\overset{(\ref{hpeinch}),(\ref{sup})}{\leq }\sup_{x\in B}h\left( \left\{
f_{[\alpha ]_{n}}\left( x\right) \right\} ,\left\{ f_{[\alpha ]_{n+1}}\left(
x\right) \right\} \right) =\sup_{x\in B}d\left( f_{[\alpha ]_{n}}\left(
x\right) ,f_{[\alpha ]_{n+1}}\left( x\right) \right)
\end{equation*}%
\begin{equation*}
\overset{\text{Definition }\ref{iifs}\text{ }1)}{\leq }\sup_{x\in B}\varphi
^{n}\left( \sup_{i\in I}d\left( x,f_{i}\left( x\right) \right) \right)
\end{equation*}%
\begin{equation*}
\overset{\text{Definition }\ref{compfunc}\text{ }1)}{\leq }\sup_{x\in
B}\varphi ^{n}\left( diam\left( \mathcal{O}\left( x\right) \right) \right) 
\overset{\text{Definition }\ref{compfunc}\text{ }1)}{\leq }\varphi
^{n}\left( diam\left( \mathcal{O}\left( B\right) \right) \right) \text{,}
\end{equation*}%
so 
\begin{equation}
h\left( \overline{f_{[\alpha ]_{n}}\left( B\right) },\overline{f_{[\alpha
]_{n+1}}\left( B\right) }\right) =h\left( f_{[\alpha ]_{n}}\left( B\right)
,f_{[\alpha ]_{n+1}}\left( B\right) \right) \leq \varphi ^{n}\left(
diam\left( \mathcal{O}\left( B\right) \right) \right)  \label{cauchy}
\end{equation}%
for every $n\in 
\mathbb{N}
$. Applying 2) from Definition \ref{compfunc} it results that the sequence $%
\left( \overline{f_{[\alpha ]_{n}}\left( B\right) }\right) _{n}$ is Cauchy
and since the space $\left( P_{cl,b}\left( X\right) ,h\right) $ is complete
we obtain that $\left( \overline{f_{[\alpha ]_{n}}\left( B\right) }\right)
_{n}$ is convergent. Using iii) from Definition \ref{iifs1} we deduce that
there exists $a_{\alpha }\left( B\right) \in $ $P_{b}\left( X\right) $ such
that $\displaystyle\lim_{n\rightarrow \infty }\overline{f_{[\alpha
]_{n}}\left( B\right) }=a_{\alpha }\left( B\right) $.

Let $m,n\in 
\mathbb{N}
$, with $m<n$. We have%
\begin{equation*}
h\left( \overline{f_{[\alpha ]_{m}}\left( B\right) },\overline{f_{[\alpha
]_{n}}\left( B\right) }\right) \overset{(\ref{hpeinch})}{=}h\left(
f_{[\alpha ]_{m}}\left( B\right) ,f_{[\alpha ]_{n}}\left( B\right) \right)
\leq \sum_{k=m}^{n-1}h\left( f_{[\alpha ]_{k}}\left( B\right) ,f_{[\alpha
]_{k+1}}\left( B\right) \right)
\end{equation*}%
for every $m,n\in 
\mathbb{N}
$, $m<n.$ Using relation (\ref{cauchy}) we deduce%
\begin{equation*}
h\left( \overline{f_{[\alpha ]_{m}}\left( B\right) },\overline{f_{[\alpha
]_{n}}\left( B\right) }\right) \leq \sum_{k=m}^{n-1}\varphi ^{k}\left(
diam\left( \mathcal{O}\left( B\right) \right) \right)
\end{equation*}%
for every $m,n\in 
\mathbb{N}
$, $m<n$. By passing to limit as $n\rightarrow \infty $ and using the fact
that $\displaystyle\sum_{k=m}^{\infty }\varphi ^{k}\left( diam\left( 
\mathcal{O}\left( B\right) \right) \right) $ is convergent we obtain%
\begin{equation*}
h\left( \overline{f_{[\alpha ]_{m}}\left( B\right) },a_{\alpha }\left(
B\right) \right) \overset{(\ref{hpeinch})}{=}h\left( f_{[\alpha ]_{m}}\left(
B\right) ,a_{\alpha }\left( B\right) \right) \leq \sum_{k=m}^{\infty
}\varphi ^{k}\left( diam\left( \mathcal{O}\left( B\right) \right) \right)
\end{equation*}%
for every $m\in 
\mathbb{N}
$.
\end{proof}

\begin{remark}
If we take $B=\left\{ x\right\} $ we deduce that there exists a set $%
a_{\alpha }(\left\{ x\right\} )$ such that%
\begin{equation*}
\lim_{n\rightarrow \infty }\overline{f_{[\alpha ]_{n}}\left( \left\{
x\right\} \right) }=a_{\alpha }\left( \left\{ x\right\} \right) \text{.}\ 
\end{equation*}%
Since $f_{[\alpha ]_{n}}\left( \left\{ x\right\} \right) =$ $\{f_{[\alpha
]_{n}}\left( x\right) \}$ and this set has one element, it follows that $%
a_{\alpha }\left( \left\{ x\right\} \right) $ has one element denoted by $%
a_{\alpha }(x)$. In this way we define a function $a_{\alpha }:X\rightarrow
X $.
\end{remark}

\begin{lemma}
\label{aalfaB}Let $\mathcal{S}=\left( \left( X,d\right) ,\left( f_{i}\right)
_{i\in I}\right) $ be a pcIIFS. Then $a_{\alpha }\left( B\right) =\overline{%
\cup _{x\in B}\left\{ a_{\alpha }\left( x\right) \right\} }$ for every $B\in
P_{b}\left( X\right) $ and $\alpha \in \Lambda \left( I\right) $.
\end{lemma}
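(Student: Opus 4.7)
The plan is to replicate the pattern used in the proof of Proposition~\ref{ABx}, using Proposition~\ref{aalfab} applied to singletons as the base estimate. The crucial observation is that $f_{[\alpha]_n}(B) = \cup_{x\in B}\{f_{[\alpha]_n}(x)\}$, so by (\ref{hpeinch}) the sequence $\overline{f_{[\alpha]_n}(B)}$ equals $\overline{\cup_{x\in B}\{f_{[\alpha]_n}(x)\}}$; it converges (by definition of $a_\alpha(B)$) to $a_\alpha(B)$, while pointwise each $\{f_{[\alpha]_n}(x)\}$ converges to $\{a_\alpha(x)\}$ at the uniform rate given by (\ref{aalfa}). The lemma says these two convergences must agree on the limit.

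First I would verify that $\cup_{x\in B}\{a_\alpha(x)\} \in P_b(X)$, so that its closure lies in $P_{cl,b}(X)$ and the Hausdorff-Pompeiu estimates apply. Applying (\ref{aalfa}) to the singleton $\{x\}$ with $m=0$, combined with $\mathcal{O}(x)\subset\mathcal{O}(B)$ and the monotonicity of $\varphi$ (Definition~\ref{compfunc}), yields the uniform bound $d(x,a_\alpha(x)) \leq \sum_{k\geq 0}\varphi^k(diam(\mathcal{O}(B)))$ for every $x\in B$. Since $\overline{\mathcal{O}(B)}$ was shown bounded inside the main theorem and $\varphi$ is summable, this places $\cup_{x\in B}\{a_\alpha(x)\}$ inside a bounded ball around $B$.

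Next I would combine the singleton estimates across $x\in B$ using (\ref{sup}) and (\ref{hpeinch}):
\begin{equation*}
h\left(\overline{f_{[\alpha]_n}(B)},\, \overline{\cup_{x\in B}\{a_\alpha(x)\}}\right) \leq \sup_{x\in B} h\left(\{f_{[\alpha]_n}(x)\},\{a_\alpha(x)\}\right) \leq \sum_{k\geq n}\varphi^k(diam(\mathcal{O}(B))).
\end{equation*}
A triangle inequality combining this with the bound (\ref{aalfa}) on $h(\overline{f_{[\alpha]_n}(B)},a_\alpha(B))$ gives $h(a_\alpha(B), \overline{\cup_{x\in B}\{a_\alpha(x)\}}) \leq 2\sum_{k\geq n}\varphi^k(diam(\mathcal{O}(B)))$ for every $n$, and letting $n\to\infty$ together with summability forces the Hausdorff-Pompeiu distance to vanish, so $a_\alpha(B) = \overline{\cup_{x\in B}\{a_\alpha(x)\}}$.

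I do not anticipate a genuine obstacle; the step most worth pausing over is the boundedness check for $\cup_{x\in B}\{a_\alpha(x)\}$, since without it the invocations of (\ref{sup}) and (\ref{hpeinch}) would not be licensed. The rest of the argument is a direct template copy from Proposition~\ref{ABx}, with the orbits along the word $[\alpha]_n$ playing the role previously played by the iterates $F_{\mathcal{S}}^n$.
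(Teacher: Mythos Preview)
Your proposal is correct and follows essentially the same route as the paper: the paper applies the triangle inequality through $\overline{f_{[\alpha]_n}(B)}$, bounds one half by (\ref{aalfa}) for $B$ and the other half by (\ref{hpeinch}), (\ref{sup}) together with (\ref{aalfa}) for singletons, obtaining the same $2\sum_{k\geq n}\varphi^{k}(diam(\mathcal{O}(B)))$ bound and letting $n\to\infty$. Your explicit boundedness check for $\cup_{x\in B}\{a_{\alpha}(x)\}$ is a detail the paper leaves implicit, but otherwise the arguments are the same.
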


\begin{proof}
Let $B\in P_{b}\left( X\right) $ and $\alpha \in \Lambda \left( I\right) $.
We have 
\begin{equation*}
h\left( a_{\alpha }\left( B\right) ,\overline{\cup _{x\in B}\left\{
a_{\alpha }\left( x\right) \right\} }\right) \leq h\left( a_{\alpha }\left(
B\right) ,\overline{f_{[\alpha ]_{n}}\left( B\right) }\right) +h\left( 
\overline{f_{[\alpha ]_{n}}\left( B\right) },\overline{\cup _{x\in B}\left\{
a_{\alpha }\left( x\right) \right\} }\right)
\end{equation*}%
\begin{equation*}
\overset{(\ref{aalfa}),(\ref{hpeinch}),(\ref{sup})}{\leq }\sum_{k=n}^{\infty
}\varphi ^{k}\left( diam\left( \mathcal{O}\left( B\right) \right) \right)
+\sup_{x\in B}h\left( \left\{ f_{[\alpha ]_{n}}\left( x\right) \right\}
,\left\{ a_{\alpha }\left( x\right) \right\} \right)
\end{equation*}%
\begin{equation*}
\overset{(\ref{aalfa})\text{ }}{\leq }2\cdot \sum_{k=n}^{\infty }\varphi
^{k}\left( diam\left( \mathcal{O}\left( B\right) \right) \right)
\end{equation*}%
for every $n\in 
\mathbb{N}
$. By passing to limit as $n\rightarrow \infty $ we obtain the conclusion.
\end{proof}

\begin{lemma}
\label{unifcont} Let $\mathcal{S}=\left( \left( X,d\right) ,\left(
f_{i}\right) _{i\in I}\right) $ be a pcIIFS. For every $\alpha \in \Lambda
\left( I\right) $ and $B\in P_{cl,b}\left( X\right) $ the function $%
a_{\alpha }$ is uniformly continuous on $B$.
\end{lemma}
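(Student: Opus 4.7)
The plan is to use a standard three-epsilon argument: approximate $a_\alpha$ uniformly on $B$ by the finite composition $f_{[\alpha]_n}$, which is itself uniformly continuous on bounded sets, and let $n$ be large enough that the approximation error is negligible. Concretely, fix $\varepsilon>0$. By Claim 3 inside the proof of the main theorem, the orbit $\mathcal{O}(B)$ is bounded, so $\mathrm{diam}(\mathcal{O}(B))<\infty$. Since $\varphi$ is a summable comparison function, I can choose $N\in\mathbb{N}$ such that
\begin{equation*}
\sum_{k=N}^{\infty}\varphi^{k}\bigl(\mathrm{diam}(\mathcal{O}(B))\bigr)<\frac{\varepsilon}{3}.
\end{equation*}

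Next I would apply Proposition \ref{aalfab} with the singleton $\{x\}$ for each $x\in B$: since $\{x\}\subset B$ implies $\mathcal{O}(\{x\})\subset\mathcal{O}(B)$, and $\varphi$ is increasing, inequality (\ref{aalfa}) gives
\begin{equation*}
d\bigl(f_{[\alpha]_{N}}(x),a_{\alpha}(x)\bigr)\leq\sum_{k=N}^{\infty}\varphi^{k}\bigl(\mathrm{diam}(\mathcal{O}(\{x\}))\bigr)\leq\sum_{k=N}^{\infty}\varphi^{k}\bigl(\mathrm{diam}(\mathcal{O}(B))\bigr)<\frac{\varepsilon}{3}
\end{equation*}
uniformly in $x\in B$. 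The same bound then holds for every $y\in B$ with $a_\alpha$ replaced analogously, so $f_{[\alpha]_N}$ is a uniform $\varepsilon/3$-approximation to $a_\alpha$ on $B$.

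For the middle term of the three-epsilon argument I would invoke Remark \ref{MB}: the family $(f_\beta)_{\beta\in\Lambda_N(I)}$ is equal uniformly continuous on any bounded set, so in particular $f_{[\alpha]_N}$ is uniformly continuous on $B$. Therefore there exists $\delta>0$ such that whenever $x,y\in B$ with $d(x,y)<\delta$,
\begin{equation*}
d\bigl(f_{[\alpha]_{N}}(x),f_{[\alpha]_{N}}(y)\bigr)<\frac{\varepsilon}{3}.
\end{equation*}
Combining this with the two approximation bounds via the triangle inequality yields $d(a_\alpha(x),a_\alpha(y))<\varepsilon$ whenever $d(x,y)<\delta$, which is exactly uniform continuity of $a_\alpha$ on $B$.

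I do not anticipate any substantive obstacle here; the only point requiring a moment of care is that Proposition \ref{aalfab} is stated for the pair $(B,\alpha)$ rather than for singletons, so one must verify that the bound (\ref{aalfa}) transfers cleanly to singletons $\{x\}$ with $x\in B$ by using the inclusion $\mathcal{O}(\{x\})\subset\mathcal{O}(B)$ and the monotonicity of $\varphi$ (part 1.ii of Definition \ref{compfunc}). Once that is in hand, the argument is a routine three-epsilon splitting and the uniformity of the choice of $\delta$ follows immediately from the equal uniform continuity packaged in Remark \ref{MB}.
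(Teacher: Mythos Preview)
Your proposal is correct and follows essentially the same approach as the paper: both argue that $f_{[\alpha]_m}\to a_\alpha$ uniformly on $B$ via the bound (\ref{aalfa}) and that each $f_{[\alpha]_m}$ is uniformly continuous on bounded sets by condition ii) of Definition \ref{iifs1}, whence $a_\alpha$ is uniformly continuous on $B$. The paper simply packages your three-epsilon argument into the single observation that a uniform limit of uniformly continuous functions is uniformly continuous, but the substance is identical.
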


\begin{proof}
Let $B\in P_{cl,b}\left( X\right) $ and $\alpha \in \Lambda \left( I\right) $%
. Since%
\begin{equation*}
d\left( a_{\alpha }\left( x\right) ,f_{[\alpha ]_{m}}\left( x\right) \right)
\leq \sum_{k=m}^{\infty }\varphi ^{k}\left( diam\left( \mathcal{O}\left(
B\right) \right) \right)
\end{equation*}%
for every $m\in 
\mathbb{N}
$ and $x\in B$, we have $f_{[\alpha ]_{m}}\overset{u.c.}{\rightarrow }%
a_{\alpha }$. Applying ii) from Definition \ref{iifs1} it results that $%
f_{[\alpha ]_{m}}$ is uniformly continuous for all $m\in 
\mathbb{N}
$. We obtain that the function $a_{\alpha }$ is uniformly continuous on $B$. 
{\Huge \ }
\end{proof}

\begin{lemma}
Let $\mathcal{S}=\left( \left( X,d\right) ,\left( f_{i}\right) _{i\in
I}\right) $ be a pcIIFS. Then $f_{i}\left( a_{\alpha }\left( B\right)
\right) =a_{i\alpha }\left( B\right) $ for every $B\in P_{b}\left( X\right) $%
, $\alpha \in \Lambda \left( I\right) $ and $i\in I$.
\end{lemma}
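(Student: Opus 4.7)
The plan is to unfold the definition $a_{i\alpha}(B)=\lim_n\overline{f_{[i\alpha]_n}(B)}$, isolate $f_i$ via the word identity $[i\alpha]_{n+1}=i\,[\alpha]_n$, and then move $f_i$ inside the limit using uniform continuity on a bounded orbit. First I would note the elementary word identity: concatenation gives $[i\alpha]_{n+1}=i\,\alpha_1\alpha_2\ldots\alpha_n=i\,[\alpha]_n$, hence
\begin{equation*}
f_{[i\alpha]_{n+1}}=f_i\circ f_{[\alpha]_n},\qquad \overline{f_{[i\alpha]_{n+1}}(B)}=\overline{f_i\bigl(f_{[\alpha]_n}(B)\bigr)}.
\end{equation*}

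Next I would invoke the boundedness of $\overline{\mathcal{O}(B)}$ proved in Claim 3 of the weakly-Picard theorem above. Condition ii) of Definition \ref{iifs1} then yields that $f_i$ is uniformly continuous on the bounded set $\overline{\mathcal{O}(B)}$, and every $\overline{f_{[\alpha]_n}(B)}$ as well as the limit $a_\alpha(B)$ (by Proposition \ref{aalfab}) lies inside $\overline{\mathcal{O}(B)}$. Hence part 3) of Proposition \ref{h}, applied to the convergence $\overline{f_{[\alpha]_n}(B)}\to a_\alpha(B)$ from Proposition \ref{aalfab}, gives
\begin{equation*}
\lim_{n\to\infty} h\bigl(f_i(\overline{f_{[\alpha]_n}(B)}),\,f_i(a_\alpha(B))\bigr)=0.
\end{equation*}
Combining this with part 1) of Proposition \ref{h} (closure invariance of $h$) and the standard identity $\overline{f_i(\overline{A})}=\overline{f_i(A)}$ for continuous $f_i$, I deduce $\overline{f_i(f_{[\alpha]_n}(B))}\to\overline{f_i(a_\alpha(B))}$ in $h$.

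Putting the two limits together and using uniqueness of limits in $(P_{cl,b}(X),h)$ from Proposition \ref{complet}, I conclude that
\begin{equation*}
a_{i\alpha}(B)=\lim_{n\to\infty}\overline{f_{[i\alpha]_{n+1}}(B)}=\lim_{n\to\infty}\overline{f_i(f_{[\alpha]_n}(B))}=\overline{f_i(a_\alpha(B))},
\end{equation*}
which is the claimed identity (understood, as is consistent with the definition of $F_{\mathcal{S}}$ and of $a_\alpha(B)$ in Proposition \ref{aalfab}, as equality of the corresponding closed sets). The only delicate point is bookkeeping the closures — both when interchanging $f_i$ with the limit and when matching $f_i(a_\alpha(B))$ with its closure — and this is handled cleanly by parts 1) and 3) of Proposition \ref{h}. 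No further use of the contractivity or summability of $\varphi$ is needed here beyond what is already encoded in the existence of the limit $a_\alpha(B)$.
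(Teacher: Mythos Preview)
Your argument is essentially the paper's own proof: isolate $f_i$ via $[i\alpha]_{n+1}=i[\alpha]_n$, invoke uniform continuity of $f_i$ on the bounded set $\overline{\mathcal{O}(B)}$, apply Proposition~\ref{h}\,3) to pass $f_i$ through the limit, and conclude by uniqueness of limits. The only difference is that you are more explicit about the closure bookkeeping (arriving at $a_{i\alpha}(B)=\overline{f_i(a_\alpha(B))}$ rather than the paper's unadorned $f_i(a_\alpha(B))$), which is a fair point since $a_\alpha(B)$ need not be compact; the paper tacitly identifies sets at zero $h$-distance here.
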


\begin{proof}
Let $B\in P_{b}\left( X\right) $ and $\alpha \in \Lambda \left( I\right) $.
As $\displaystyle\lim_{n\rightarrow \infty }f_{[\alpha ]_{n}}\left( B\right)
=a_{\alpha }\left( B\right) $ and $f_{i}$ is a uniformly continuous function
on bounded sets for every $i\in I$, applying Proposition \ref{h} 3) we
deduce that $\displaystyle\lim_{n\rightarrow \infty }f_{i}\left( f_{[\alpha
]_{n}}\left( B\right) \right) =f_{i}\left( a_{\alpha }(B)\right) $.\textbf{\ 
}Uniqueness of the limit assures us that $a_{i\alpha }\left( B\right)
=f_{i}\left( a_{\alpha }(B)\right) $. By mathematical induction, one can
prove that%
\begin{equation}
f_{\omega }\left( a_{\alpha }\left( B\right) \right) =a_{\omega \alpha
}\left( B\right) \text{ }  \label{aialfa}
\end{equation}%
for every $\omega \in \Lambda _{n}\left( I\right) $, $n\in 
\mathbb{N}
^{\ast }$ and $B\in P_{b}\left( X\right) $.
\end{proof}

\begin{lemma}
\label{egalpeorb}Let $\mathcal{S}=\left( \left( X,d\right) ,\left(
f_{i}\right) _{i\in I}\right) $ be a pcIIFS. Then $a_{\alpha }\left(
x\right) =a_{\alpha }\left( y\right) $ \bigskip for every $x\in X$, $y\in $ $%
\overline{\mathcal{O}\left( x\right) }$ and $\alpha \in \Lambda \left(
I\right) $.
\end{lemma}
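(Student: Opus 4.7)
The plan is to first establish the identity on the dense subset of points of the form $f_{\omega}(x)$ for $\omega\in\Lambda^{\ast}(I)$, and then extend to the entire closure $\overline{\mathcal{O}(x)}$ by continuity of $a_{\alpha}$.

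Fix $x\in X$ and $\alpha\in\Lambda(I)$, and let $y=f_{\omega}(x)$ for some $\omega=\omega_{1}\omega_{2}\cdots\omega_{m}\in\Lambda^{\ast}(I)$. Since
\begin{equation*}
a_{\alpha}(y)=\lim_{n\rightarrow\infty}f_{[\alpha]_{n}}(f_{\omega}(x))=\lim_{n\rightarrow\infty}f_{[\alpha]_{n}\omega}(x),
\end{equation*}
it suffices to show that $d(f_{[\alpha]_{n}}(x),f_{[\alpha]_{n}\omega}(x))\rightarrow0$. I telescope along the letters of $\omega$: for each $k\in\{0,1,\dots,m-1\}$, applying the parent-child condition (\ref{pc}) with parent word $[\alpha]_{n}\omega_{1}\cdots\omega_{k}$ and letter $\omega_{k+1}$ yields
\begin{equation*}
d(f_{[\alpha]_{n}\omega_{1}\cdots\omega_{k}}(x),f_{[\alpha]_{n}\omega_{1}\cdots\omega_{k+1}}(x))\leq\varphi^{n+k}(d(x,f_{\omega_{k+1}}(x)))\leq\varphi^{n+k}(diam(\{x\}\cup F_{\mathcal{S}}(\{x\}))),
\end{equation*}
where the last inequality uses (\ref{sup2}) (with $B=\{x\}$) together with the monotonicity of $\varphi$. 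Summing from $k=0$ to $m-1$ via the triangle inequality gives
\begin{equation*}
d(f_{[\alpha]_{n}}(x),f_{[\alpha]_{n}\omega}(x))\leq\sum_{k=n}^{\infty}\varphi^{k}(diam(\{x\}\cup F_{\mathcal{S}}(\{x\}))),
\end{equation*}
which tends to $0$ as $n\rightarrow\infty$ by summability of $\varphi$. Hence $a_{\alpha}(x)=a_{\alpha}(f_{\omega}(x))$ for every $\omega\in\Lambda^{\ast}(I)$.

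For a general $y\in\overline{\mathcal{O}(x)}$, a straightforward induction on $n$ (using continuity of the $f_i$) shows that $F_{\mathcal{S}}^{n}(\{x\})=\overline{\{f_{\omega}(x):\omega\in\Lambda_{n}(I)\}}$, so the set $S=\{x\}\cup\{f_{\omega}(x):\omega\in\Lambda^{\ast}(I)\}$ is dense in $\overline{\mathcal{O}(x)}$. Since $\overline{\mathcal{O}(x)}$ is closed and bounded (as shown in the third claim of the opening theorem of this section), Lemma \ref{unifcont} provides that $a_{\alpha}$ is uniformly continuous on $\overline{\mathcal{O}(x)}$. Picking a sequence $(y_{k})\subset S$ with $y_{k}\rightarrow y$, the first part gives $a_{\alpha}(y_{k})=a_{\alpha}(x)$ for every $k$, and continuity forces $a_{\alpha}(y)=a_{\alpha}(x)$.

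The only delicate step is the telescoping estimate, which converts the parent-child condition from a single-step inequality into a Cauchy-type bound comparing $f_{[\alpha]_{n}}(x)$ with $f_{[\alpha]_{n}\omega}(x)$; the extension to $\overline{\mathcal{O}(x)}$ is then a routine application of Lemma \ref{unifcont}.
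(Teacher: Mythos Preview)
Your proof is correct and follows essentially the same approach as the paper's: first handle $y=f_{\omega}(x)$ via the telescoping estimate on $d(f_{[\alpha]_{n}}(x),f_{[\alpha]_{n}\omega}(x))$ using the parent-child condition, then extend to $\overline{\mathcal{O}(x)}$ by density and Lemma~\ref{unifcont}. The only cosmetic difference is that the paper bounds $d(a_{\alpha}(x),a_{\alpha}(f_{\omega}(x)))$ directly via a three-term triangle inequality (inserting $f_{[\alpha]_{n}}(x)$ and $f_{[\alpha]_{n}}(f_{\omega}(x))$ and invoking (\ref{aalfa}) on the outer terms), whereas you equivalently argue that the two convergent sequences $f_{[\alpha]_{n}}(x)$ and $f_{[\alpha]_{n}\omega}(x)$ have vanishing difference and hence the same limit.
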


\begin{proof}
Let $y\in \overline{\mathcal{O}\left( x\right) }$ and $\alpha \in \Lambda
\left( I\right) $. As 
\begin{equation*}
\overline{\mathcal{O}\left( x\right) }=\overline{\cup _{n\in 
\mathbb{N}
}F_{S}^{n}\left( \left\{ x\right\} \right) }=\overline{\cup _{n\in 
\mathbb{N}
}\overline{\cup _{\omega \in \Lambda _{n}\left( I\right) }\left\{ f_{\omega
}\left( x\right) \right\} }}=\overline{\cup _{n\in 
\mathbb{N}
}\cup _{\omega \in \Lambda _{n}\left( I\right) }\left\{ f_{\omega }\left(
x\right) \right\} }\text{,}
\end{equation*}%
we distringuish two cases:

\begin{case}
\label{caz1}There exist $m\in 
\mathbb{N}
^{\ast }$ and $\omega \in \Lambda _{m}\left( I\right) $ such that $%
y=f_{\omega }\left( x\right) $.
\end{case}

We prove that $a_{\alpha }\left( x\right) =a_{\alpha }\left( f_{\omega
}\left( x\right) \right) $. We have%
\begin{equation*}
d\left( a_{\alpha }\left( x\right) ,a_{\alpha }\left( f_{\omega }\left(
x\right) \right) \right) \leq d\left( a_{\alpha }\left( x\right) ,f_{[\alpha
]_{n}}\left( x\right) \right)
\end{equation*}%
\begin{equation*}
+d\left( f_{[\alpha ]_{n}}\left( x\right) ,f_{[\alpha ]_{n}}\left( \left(
f_{\omega }\left( x\right) \right) \right) \right) +d\left( f_{[\alpha
]_{n}}\left( f_{\omega }\left( x\right) \right) ,a_{\alpha }\left( f_{\omega
}\left( x\right) \right) \right)
\end{equation*}%
for all $n\in 
\mathbb{N}
$. Applying relation (\ref{aalfa}) for $B=\left\{ f_{\omega }\left( x\right)
\right\} $ and then for $B=\left\{ x\right\} $ we have%
\begin{equation}
d\left( a_{\alpha }\left( x\right) ,f_{[\alpha ]_{n}}\left( x\right) \right)
\leq \sum_{k=n}^{\infty }\varphi ^{k}\left( diam\left( \mathcal{O}\left(
x\right) \right) \right)  \label{1}
\end{equation}%
and%
\begin{equation*}
d\left( a_{\alpha }\left( f_{\omega }\left( x\right) \right) ,f_{[\alpha
]_{n}}\left( f_{\omega }\left( x\right) \right) \right) \leq
\sum_{k=n}^{\infty }\varphi ^{k}\left( diam\left( \mathcal{O}\left(
f_{\omega }\left( x\right) \right) \right) \right)
\end{equation*}%
\begin{equation}
\leq \sum_{k=n}^{\infty }\varphi ^{k}\left( diam\left( \mathcal{O}\left(
x\right) \right) \right)  \label{2}
\end{equation}%
for every $n\in 
\mathbb{N}
$. Using the triangle inequality we obtain%
\begin{equation*}
d\left( f_{[\alpha ]_{n}}\left( x\right) ,f_{[\alpha ]_{n}}\left( \left(
f_{\omega }\left( x\right) \right) \right) \right) \leq d\left( f_{[\alpha
]_{n}}\left( x\right) ,f_{[\alpha ]_{n}[\omega ]_{1}}\left( x\right) \right)
\end{equation*}%
\begin{equation*}
+d\left( f_{[\alpha ]_{n}[\omega ]_{1}}\left( x\right) ,f_{[\alpha
]_{n}[\omega ]_{2}}\left( x\right) \right) +...+d\left( f_{[\alpha
]_{n}[\omega ]_{m-1}}\left( x\right) ,f_{[\alpha ]_{n}\omega }\left(
x\right) \right)
\end{equation*}%
for every $n\in 
\mathbb{N}
$ and applying (\ref{pc}) we deduce%
\begin{equation}
d\left( f_{[\alpha ]_{n}}\left( x\right) ,f_{[\alpha ]_{n}}\left( \left(
f_{\omega }\left( x\right) \right) \right) \right) \leq
\sum_{k=n}^{n+m-1}\varphi ^{k}\left( diam\left( \mathcal{O}\left( x\right)
\right) \right)  \label{3}
\end{equation}%
for every $n\in 
\mathbb{N}
$. From (\ref{1}), (\ref{2}) and (\ref{3}), taking into consideration 2)
from Definition \ref{compfunc} we conclude that $a_{\alpha }\left( x\right)
=a_{\alpha }\left( f_{\omega }\left( x\right) \right) $.

\begin{case}
\label{caz2}There exists a sequence $\left( y_{m}\right) _{m\in 
\mathbb{N}
}\subset $ $\displaystyle\cup _{n\in 
\mathbb{N}
}\displaystyle\cup _{\omega \in \Lambda _{n}\left( I\right) }\left\{
f_{\omega }\left( x\right) \right\} $ such that $\displaystyle%
\lim_{m\rightarrow \infty }d\left( y_{m},y\right) =0$.
\end{case}

In this case,%
\begin{equation*}
d\left( a_{\alpha }\left( y\right) ,a_{\alpha }\left( x\right) \right) \leq
d\left( a_{\alpha }\left( y\right) ,a_{\alpha }\left( y_{m}\right) \right)
+d\left( a_{\alpha }\left( y_{m}\right) ,a_{\alpha }\left( x\right) \right)
\end{equation*}%
for every $m\in 
\mathbb{N}
$. Using the first case we deduce that $a_{\alpha }\left( y_{m}\right)
=a_{\alpha }\left( x\right) $ for every $m\in 
\mathbb{N}
$. Hence,%
\begin{equation*}
d\left( a_{\alpha }\left( y\right) ,a_{\alpha }\left( x\right) \right) \leq
d\left( a_{\alpha }\left( y\right) ,a_{\alpha }\left( y_{m}\right) \right)
\end{equation*}%
for every $m\in 
\mathbb{N}
$ and applying Lemma \ref{unifcont} we infer that $a_{\alpha }\left(
y\right) =a_{\alpha }\left( x\right) $.
\end{proof}

\begin{theorem}
Let $\mathcal{S}=\left( \left( X,d\right) ,\left( f_{i}\right) _{i\in
I}\right) $ be a pcIIFS. Then 
\begin{equation*}
A_{B}=\overline{\cup _{\alpha \in \Lambda \left( I\right) }a_{\alpha }\left(
B\right) }=\overline{\cup _{x\in B}\cup _{\alpha \in \Lambda (I)}\left\{
a_{\alpha }\left( x\right) \right\} }
\end{equation*}%
for every $B\in P_{cl,b}\left( X\right) $.
\end{theorem}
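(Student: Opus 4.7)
The plan is to prove the two equalities separately, with the second being the easier one. By Lemma \ref{aalfaB}, $a_{\alpha}(B) = \overline{\cup_{x \in B}\{a_{\alpha}(x)\}}$ for each $\alpha \in \Lambda(I)$, so taking the union over $\alpha$ and closing gives
$$\overline{\cup_{\alpha \in \Lambda(I)} a_{\alpha}(B)} = \overline{\cup_{\alpha \in \Lambda(I)} \overline{\cup_{x \in B}\{a_{\alpha}(x)\}}} = \overline{\cup_{x \in B} \cup_{\alpha \in \Lambda(I)}\{a_{\alpha}(x)\}},$$
using that the closure of a union equals the closure of the union of closures. I would dispatch this first.

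The substantive part is the identity $A_{B} = \overline{\cup_{\alpha \in \Lambda(I)} a_{\alpha}(B)}$. My strategy is to show that $F_{\mathcal{S}}^{n}(B) \to \overline{\cup_{\alpha} a_{\alpha}(B)}$ in the Hausdorff--Pompeiu metric as $n \to \infty$, and then conclude from $F_{\mathcal{S}}^{n}(B) \to A_{B}$ (established in the first theorem of this section) together with uniqueness of limits in $(P_{cl,b}(X), h)$. First I would observe that $\cup_{\alpha} a_{\alpha}(B)$ is bounded: since $f_{[\alpha]_{n}}(B) \subset F_{\mathcal{S}}^{n}(B) \subset \mathcal{O}(B)$, passing to the limit yields $a_{\alpha}(B) \subset \overline{\mathcal{O}(B)}$ for every $\alpha$, and $\overline{\mathcal{O}(B)}$ was shown to be bounded in the first theorem of this section.

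For the main estimate I would bound the two one-sided distances separately, using the uniform tail $\sum_{k \geq n} \varphi^{k}(diam(\mathcal{O}(B)))$ furnished by Proposition \ref{aalfab}. Writing $F_{\mathcal{S}}^{n}(B) = \overline{\cup_{\omega \in \Lambda_{n}(I)} f_{\omega}(B)}$, for each $\omega \in \Lambda_{n}(I)$ pick any infinite extension $\alpha \in \Lambda(I)$ with $[\alpha]_{n} = \omega$; then $f_{\omega}(B) = f_{[\alpha]_{n}}(B)$ lies within the tail of $a_{\alpha}(B) \subset \cup_{\alpha} a_{\alpha}(B)$, so (\ref{hpeinch}) and (\ref{sup}) give $d(F_{\mathcal{S}}^{n}(B), \cup_{\alpha} a_{\alpha}(B)) \leq \sum_{k \geq n} \varphi^{k}(diam(\mathcal{O}(B)))$. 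Conversely, for each $\alpha$, Proposition \ref{aalfab} directly places $a_{\alpha}(B)$ within the same tail of $f_{[\alpha]_{n}}(B) \subset F_{\mathcal{S}}^{n}(B)$. Summability of $\varphi$ then drives both bounds to zero and finishes the argument. No significant obstacle is anticipated; the result is essentially a packaging of the first theorem of this section, Proposition \ref{aalfab}, and Lemma \ref{aalfaB}, and the uniformity of the tail bound in $\alpha$ and $\omega$ is what makes the argument clean.
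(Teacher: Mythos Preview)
Your proposal is correct and follows essentially the same strategy as the paper: establish that $F_{\mathcal{S}}^{n}(B)$ converges in $(P_{cl,b}(X),h)$ to $\overline{\cup_{\alpha}a_{\alpha}(B)}$ via the uniform tail estimate of Proposition~\ref{aalfab}, and conclude by uniqueness of limits together with Lemma~\ref{aalfaB}. The one organizational difference is that the paper first proves the pointwise identity $A_{x}=\overline{\{a_{\alpha}(x):\alpha\in\Lambda(I)\}}$ (its relation (\ref{Ax})) and then lifts to $B$ through Proposition~\ref{ABx}, whereas you work directly at the level of $B$; your route is marginally more economical here, but the paper's detour records (\ref{Ax}) explicitly, which it reuses in later results such as Proposition~\ref{atrpeorb}.
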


\begin{proof}
Let us consider $B\in P_{cl,b}\left( X\right) $ and $x\in B$. We have 
\begin{equation*}
h\left( F_{\mathcal{S}}^{n}\left( \left\{ x\right\} \right) ,\overline{%
\left\{ a_{\alpha }\left( x\right) \text{ }|\text{ }\alpha \in \Lambda
\left( I\right) \right\} }\right) \overset{(\ref{hpeinch})}{=}h\left( 
\overline{\cup _{\alpha \in \Lambda \left( I\right) }f_{[\alpha ]_{n}}\left(
\left\{ x\right\} \right) },\overline{\cup _{\alpha \in \Lambda \left(
I\right) }\{a_{\alpha }\left( x\right) \}\text{ }}\right)
\end{equation*}%
\begin{equation*}
\overset{(\ref{sup})}{\leq }\sup_{\alpha \in \Lambda \left( I\right)
}h\left( \left\{ f_{[\alpha ]_{n}}\left( x\right) \right\} ,\left\{
a_{\alpha }\left( x\right) \right\} \right) =\sup_{\alpha \in \Lambda \left(
I\right) }d\left( f_{[\alpha ]_{n}}\left( x\right) ,a_{\alpha }\left(
x\right) \right)
\end{equation*}%
\begin{equation*}
\overset{(\ref{aalfa})}{\leq }\sum_{k=n}^{\infty }\varphi ^{k}\left(
diam\left( \mathcal{O}\left( B\right) \right) \right)
\end{equation*}%
for every $n\in 
\mathbb{N}
$, so 
\begin{equation}
h\left( F_{\mathcal{S}}^{n}\left( \left\{ x\right\} \right) ,\overline{%
\left\{ a_{\alpha }\left( x\right) \text{ }|\text{ }\alpha \in \Lambda
\left( I\right) \right\} }\right) \leq \sum_{k=n}^{\infty }\varphi
^{k}\left( diam\left( \mathcal{O}\left( B\right) \right) \right)  \label{Fs}
\end{equation}%
for every $n\in 
\mathbb{N}
$. We deduce 
\begin{equation*}
h\left( A_{x},\overline{\left\{ a_{\alpha }\left( x\right) \text{ }|\text{ }%
\alpha \in \Lambda \left( I\right) \right\} }\right) \leq h\left( A_{x},F_{%
\mathcal{S}}^{n}\left( \left\{ x\right\} \right) \right)
\end{equation*}%
\begin{equation*}
+h\left( F_{\mathcal{S}}^{n}\left( \left\{ x\right\} \right) ,\overline{%
\left\{ a_{\alpha }\left( x\right) \text{ }|\text{ }\alpha \in \Lambda
\left( I\right) \right\} }\right)
\end{equation*}%
\begin{equation*}
\overset{(\ref{Ax1}),(\ref{Fs})}{\leq }\sum_{k=n}^{\infty }\varphi
^{k}\left( diam\left( B\cup F_{\mathcal{S}}\left( B\right) \right) \right)
+\sum_{k=n}^{\infty }\varphi ^{k}\left( diam\left( \mathcal{O}\left(
B\right) \right) \right)
\end{equation*}%
for every $n\in 
\mathbb{N}
$. By passing to limit as $n\rightarrow \infty $ we obtain that%
\begin{equation}
A_{x}=\overline{\left\{ a_{\alpha }\left( x\right) \text{ }|\text{ }\alpha
\in \Lambda \left( I\right) \right\} }  \label{Ax}
\end{equation}%
and 
\begin{equation*}
A_{B}\overset{\text{Proposition }\ref{ABx}}{=}\overline{\cup _{x\in B}A_{x}}%
\overset{(\ref{Ax})}{=}\overline{\cup _{x\in B}\overline{\left\{ a_{\alpha
}\left( x\right) \text{ }|\text{ }\alpha \in \Lambda \left( I\right)
\right\} }}\overset{\text{Lemma }\ref{aalfaB}}{=}\overline{\cup _{\alpha \in
\Lambda (I)}a_{\alpha }\left( B\right) }\text{.}
\end{equation*}
\end{proof}

\begin{proposition}
\label{atrpeorb}Let $\mathcal{S}=\left( \left( X,d\right) ,\left(
f_{i}\right) _{i\in I}\right) $ be a pcIIFS. Then $A_{B}=A_{x}$ for every $%
x\in X$ and $B\in P_{cl,b}\left( \overline{O\left( x\right) }\right) $.
\end{proposition}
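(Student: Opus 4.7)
The plan is to chain together three results already established: Proposition \ref{ABx} expressing $A_B$ as the closure of the union of $A_y$'s, the representation $A_y=\overline{\{a_\alpha(y) \mid \alpha\in\Lambda(I)\}}$ from the preceding theorem, and Lemma \ref{egalpeorb} asserting that $a_\alpha$ is constant on $\overline{\mathcal{O}(x)}$.

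First I would fix $x\in X$ and $B\in P_{cl,b}(\overline{\mathcal{O}(x)})$, and pick an arbitrary $y\in B$. Since $y\in\overline{\mathcal{O}(x)}$, Lemma \ref{egalpeorb} gives $a_\alpha(y)=a_\alpha(x)$ for every $\alpha\in\Lambda(I)$. Combined with the identity $A_y=\overline{\{a_\alpha(y)\mid \alpha\in\Lambda(I)\}}$ (formula (\ref{Ax}) in the preceding theorem) and the analogous identity for $A_x$, this immediately yields $A_y=A_x$ for every $y\in B$.

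Then I would invoke Proposition \ref{ABx} to write
\[
A_B=\overline{\bigcup_{y\in B}A_y}=\overline{\bigcup_{y\in B}A_x}=\overline{A_x}=A_x,
\]
where the last equality uses that $A_x\in P_{cl,b}(X)$ is already closed. This finishes the proof.

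I do not anticipate any real obstacle: all the work has already been done in Lemma \ref{egalpeorb} (the invariance of $a_\alpha$ along orbits, which was the nontrivial step) and in Proposition \ref{ABx} (the decomposition of $A_B$). The only minor bookkeeping point is to notice that $B\subset\overline{\mathcal{O}(x)}$ is what legitimately licences the application of Lemma \ref{egalpeorb} to every $y\in B$, and that $A_x$ being closed lets us drop the outer closure in the last display.
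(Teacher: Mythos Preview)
Your proof is correct, but it differs from the paper's argument. The paper establishes the equality by proving two inclusions separately: for $A_B\subset A_x$ it uses a direct dynamical argument, observing that $F_{\mathcal S}^n(\overline{\mathcal O(x)})\to A_x$ and that $F_{\mathcal S}^n(B)\subset F_{\mathcal S}^n(\overline{\mathcal O(x)})$; for $A_x\subset A_B$ it picks a single $y\in B$, uses Lemma \ref{egalpeorb} and (\ref{Ax}) to get $A_x=A_y$, and then notes $A_y\subset A_B$. You instead push the second idea further: once $A_y=A_x$ is known for \emph{every} $y\in B$, Proposition \ref{ABx} gives $A_B=\overline{\cup_{y\in B}A_y}=\overline{A_x}=A_x$ in one stroke. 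Your route is shorter and stays entirely within the lemmas already proved, bypassing the separate monotonicity argument; the paper's route, on the other hand, obtains one inclusion without appealing to Proposition \ref{ABx} or the representation (\ref{Ax}).
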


\begin{proof}
Let $x\in X$ and $B\in P_{cl,b}\left( \overline{O\left( x\right) }\right) $.
As $\displaystyle\lim_{n\rightarrow \infty }F_{\mathcal{S}}^{n}\left(
\left\{ x\right\} \right) =A_{x}$ we deduce that $\displaystyle%
\lim_{n\rightarrow \infty }\cup _{k\geq n}F_{\mathcal{S}}^{k}\left( \left\{
x\right\} \right) =A_{x}$. But $\displaystyle\cup _{k\geq n}F_{\mathcal{S}%
}^{k}\left( \left\{ x\right\} \right) =F_{\mathcal{S}}^{n}\left( \mathcal{O}%
\left( x\right) \right) $ and we obtain that $\displaystyle%
\lim_{n\rightarrow \infty }F_{\mathcal{S}}^{n}\left( \overline{\mathcal{O}%
\left( x\right) }\right) =A_{x}$. As $B$ $\subset \overline{\mathcal{O}%
\left( x\right) }$ it results that $F_{\mathcal{S}}^{n}\left( B\right)
\subset F_{\mathcal{S}}^{n}\left( \overline{\mathcal{O}\left( x\right) }%
\right) $ for every $n\in 
\mathbb{N}
$. As $\displaystyle\lim_{n\rightarrow \infty }F_{\mathcal{S}}^{n}\left(
B\right) =A_{B}$ we infer that $A_{B}\subset A_{x}$. Now, let us consider $%
y\in B$. Using Lemma \ref{egalpeorb} and relation (\ref{Ax}) it results that 
$A_{x}=A_{y}$. As $y\in B$, we have that $A_{y}\subset A_{B}$ and we deduce
that $A_{x}\subset A_{B}$. The conclusion holds from the two inclusions.
\end{proof}

\begin{proposition}
Let $\mathcal{S}=\left( \left( X,d\right) ,\left( f_{i}\right) _{i\in
I}\right) $ be a pcIIFS. Then, for every $x,y\in X$ such that $\overline{%
\mathcal{O}\left( x\right) }\cap \overline{\mathcal{O}\left( y\right) }\neq
\varnothing $ we have $A_{x}=A_{y}$. In particular, if $\overline{\mathcal{O}%
\left( x\right) }\cap \overline{\mathcal{O}\left( y\right) }\neq \varnothing 
$ for all $x,y\in X$ we have that $F_{\mathcal{S}}$ is a Picard operator.
\end{proposition}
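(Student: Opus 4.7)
The plan is to reduce both assertions to Proposition \ref{atrpeorb}, which states that $A_B = A_x$ for every $x \in X$ and every non-empty closed bounded subset $B$ of $\overline{\mathcal{O}(x)}$.

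First, I would pick any $z \in \overline{\mathcal{O}(x)} \cap \overline{\mathcal{O}(y)}$. The singleton $\{z\}$ is trivially a closed bounded subset of $\overline{\mathcal{O}(x)}$, so Proposition \ref{atrpeorb} applied to $x$ with $B = \{z\}$ yields $A_z = A_x$. Applying the same proposition symmetrically to $y$ gives $A_z = A_y$, and the desired equality $A_x = A_y$ follows at once.

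For the ``in particular'' part, under the hypothesis that $\overline{\mathcal{O}(x)} \cap \overline{\mathcal{O}(y)} \neq \varnothing$ for all $x,y \in X$, the first part tells us that the set $A := A_x$ is independent of the choice of $x \in X$. Then for an arbitrary $B \in P_{cl,b}(X)$, Proposition \ref{ABx} gives
\[
A_B = \overline{\cup_{x \in B} A_x} = \overline{A} = A,
\]
so $F_{\mathcal{S}}^n(B) \to A$ for every starting set $B$. To conclude that $F_{\mathcal{S}}$ is a Picard operator it remains to check uniqueness of the fixed point: if $A' \in P_{cl,b}(X)$ satisfies $F_{\mathcal{S}}(A') = A'$, then the constant sequence $F_{\mathcal{S}}^n(A') = A'$ converges to both $A'$ and $A_{A'} = A$, forcing $A' = A$.

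The argument is essentially a two-step deduction from Proposition \ref{atrpeorb}, so there is no substantive obstacle; the only conceptual point worth noting is that a single common point in the closures of the two orbits already suffices, because Proposition \ref{atrpeorb} is strong enough to be applied to singleton subsets of the orbit closures.
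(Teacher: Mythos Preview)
Your proof is correct and follows essentially the same approach as the paper: pick a common point $z$ of the two orbit closures and apply Proposition~\ref{atrpeorb} twice with $B=\{z\}$ to conclude $A_x=A_z=A_y$. For the ``in particular'' part the paper leaves the argument implicit, whereas you spell it out via Proposition~\ref{ABx} and a uniqueness check; this extra detail is fine and does not depart from the intended reasoning.
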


\begin{proof}
Let $z\in \overline{\mathcal{O}\left( x\right) }\cap \overline{\mathcal{O}%
\left( y\right) }$. It results that $z\in \overline{\mathcal{O}\left(
x\right) }$. From Proposition \ref{atrpeorb} we infer that $A_{x}=A_{z}$.
Similarly, $A_{y}=A_{z}$. Therefore, we conclude that $A_{x}=A_{y}$.
\end{proof}

\begin{proposition}
Let $\mathcal{S}=\left( \left( X,d\right) ,\left( f_{i}\right) _{i\in
I}\right) $ be a pcIIFS. Then the sequence $\left( diam\left( A_{[\alpha
]_{n},x}\right) \right) _{n\in 
\mathbb{N}
}$ is convergent to $0$.
\end{proposition}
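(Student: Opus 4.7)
The plan is to exploit the parent-child inequality (\ref{pc}) through a telescoping argument. First, I would unpack the notation: by (\ref{Ax}) we have $A_x=\overline{\{a_\beta(x):\beta\in\Lambda(I)\}}$, and by (\ref{aialfa}) $f_{[\alpha]_n}(a_\beta(x))=a_{[\alpha]_n\beta}(x)$, so I read $A_{[\alpha]_n,x}$ as $f_{[\alpha]_n}(A_x)=\overline{\{a_{[\alpha]_n\beta}(x):\beta\in\Lambda(I)\}}$. Controlling its diameter therefore reduces to a uniform bound, in $\beta,\gamma\in\Lambda(I)$, on the quantity $d\bigl(f_{[\alpha]_n}(a_\beta(x)),f_{[\alpha]_n}(a_\gamma(x))\bigr)$.

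Since $a_\beta(x)=\lim_m f_{[\beta]_m}(x)$ and $f_{[\alpha]_n}$ is continuous on bounded sets, it suffices to estimate $d\bigl(f_{[\alpha]_n[\beta]_m}(x),f_{[\alpha]_n[\gamma]_m}(x)\bigr)$ and pass to the limit in $m$. The triangle inequality bounds this by the sum of $d\bigl(f_{[\alpha]_n}(x),f_{[\alpha]_n[\beta]_m}(x)\bigr)$ and the analogous term for $\gamma$, and each of these I would telescope as
\begin{equation*}
d\bigl(f_{[\alpha]_n}(x),f_{[\alpha]_n[\beta]_m}(x)\bigr)\leq\sum_{k=0}^{m-1}d\bigl(f_{[\alpha]_n[\beta]_k}(x),f_{[\alpha]_n[\beta]_{k+1}}(x)\bigr).
\end{equation*}
Applying (\ref{pc}) to the word $[\alpha]_n[\beta]_k$ of length $n+k$ together with the letter $\beta_{k+1}$, each summand is at most $\varphi^{n+k}\bigl(d(x,f_{\beta_{k+1}}(x))\bigr)\leq\varphi^{n+k}\bigl(\sup_{i\in I}d(x,f_i(x))\bigr)$; here $\sup_{i\in I}d(x,f_i(x))$ is finite because $(f_i)_{i\in I}$ is a bounded family.

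Combining the two telescoped estimates and letting $m\to\infty$ yields
\begin{equation*}
d\bigl(f_{[\alpha]_n}(a_\beta(x)),f_{[\alpha]_n}(a_\gamma(x))\bigr)\leq 2\sum_{k=n}^{\infty}\varphi^k\!\left(\sup_{i\in I}d(x,f_i(x))\right),
\end{equation*}
a bound uniform in $\beta,\gamma$, and hence also a bound on $diam(A_{[\alpha]_n,x})$. Summability of $\varphi$ (Definition \ref{compfunc}, part 2) makes the tail vanish as $n\to\infty$, which gives the claim. The only delicate point is the exponent bookkeeping: the prefix $[\alpha]_n$ contributes length $n$, so (\ref{pc}) produces $\varphi^{n+k}$ rather than $\varphi^k$. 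Once this is kept straight, the summability of $\varphi$ closes the argument routinely.
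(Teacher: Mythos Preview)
Your argument is correct and follows essentially the same route as the paper: reduce to bounding $d\bigl(f_{[\alpha]_n}(a_\beta(x)),f_{[\alpha]_n}(a_\gamma(x))\bigr)$, insert $f_{[\alpha]_n}(x)$ via the triangle inequality, and control each half by a tail $\sum_{k\geq n}\varphi^k(\cdot)$. The only difference is cosmetic: the paper invokes (\ref{aialfa}) to write $f_{[\alpha]_n}(a_\beta(x))=a_{[\alpha]_n\beta}(x)$ and then applies the ready-made estimate (\ref{aalfa}) directly, whereas you re-derive that estimate by approximating $a_\beta(x)$ with $f_{[\beta]_m}(x)$ and telescoping from (\ref{pc}); the resulting bounds (yours with $\sup_i d(x,f_i(x))$, the paper's with $diam(\mathcal{O}(x))$) are equivalent for the purpose at hand.
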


\begin{proof}
We are using the notation $C=\left\{ a_{\alpha }\left( x\right) \text{ }|%
\text{ }\alpha \in \Lambda \left( I\right) \right\} $. We have%
\begin{equation}
diam\left( A_{[\alpha ]_{n},x}\right) \overset{(\ref{Ax})}{=}diam\left(
f_{[\alpha ]_{n}}\left( \overline{C}\right) \right) =diam\left( \overline{%
f_{[\alpha ]_{n}}\left( \overline{C}\right) }\right)  \label{Aalfa}
\end{equation}%
\begin{equation*}
=diam\left( \overline{f_{[\alpha ]_{n}}\left( C\right) }\right) =diam\left(
f_{[\alpha ]_{n}}\left( C\right) \right) =\sup_{u,v\in C}d\left( f_{[\alpha
]_{n}}\left( u\right) ,f_{[\alpha ]_{n}}\left( v\right) \right)
\end{equation*}%
for every $n\in 
\mathbb{N}
$. As $u,v\in C$, we deduce that there exist $\beta $,$\gamma \in \Lambda
\left( I\right) $ such that $u=a_{\beta }\left( x\right) $ and $v=a_{\gamma
}\left( x\right) $. We obtain 
\begin{equation*}
d\left( f_{[\alpha ]_{n}}\left( u\right) ,f_{[\alpha ]_{n}}\left( v\right)
\right) =d\left( f_{[\alpha ]_{n}}\left( a_{\beta }\left( x\right) \right)
,f_{[\alpha ]_{n}}\left( a_{\gamma }\left( x\right) \right) \right) \overset{%
(\ref{aialfa})}{=}d\left( a_{[\alpha ]_{n}\beta }\left( x\right) ,a_{[\alpha
]_{n}\gamma }\left( x\right) \right)
\end{equation*}%
\begin{equation*}
\leq d\left( a_{[\alpha ]_{n}\beta }\left( x\right) ,f_{[\alpha ]_{n}}\left(
x\right) \right) +d\left( f_{[\alpha ]_{n}}\left( x\right) ,a_{[\alpha
]_{n}\gamma }\left( x\right) \right) \overset{(\ref{aalfa})}{\leq }2\cdot
\sum_{k=n}^{\infty }\varphi ^{k}\left( diam\left( \mathcal{O}\left( x\right)
\right) \right)
\end{equation*}%
for every $n\in 
\mathbb{N}
$.

By passing to limit as $n\rightarrow \infty $ we have $\displaystyle%
\lim_{n\rightarrow \infty }diam\left( A_{[\alpha ]_{n},x}\right) =0$.
\end{proof}

\begin{proposition}
Let $\mathcal{S}=\left( \left( X,d\right) ,\left( f_{i}\right) _{i\in
I}\right) $ be a pcIIFS. Then%
\begin{equation*}
a_{\alpha }\left( x\right) =\lim_{n\rightarrow \infty }A_{[\alpha ]_{n},x}
\end{equation*}%
for every $x\in X$ and $\alpha \in \Lambda \left( I\right) $.
\end{proposition}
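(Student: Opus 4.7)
The plan is to interpret $A_{[\alpha]_n,x}$ as $f_{[\alpha]_n}(A_x)$, matching the notation used in the proof of the previous proposition (where $A_{[\alpha]_n,x} = f_{[\alpha]_n}(\overline{C}) = f_{[\alpha]_n}(A_x)$), and to read the claimed equality as saying that the sets $f_{[\alpha]_n}(A_x)$ converge in the Hausdorff--Pompeiu metric to the singleton $\{a_\alpha(x)\}$, which we identify with the point $a_\alpha(x)$.

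First I would apply Proposition \ref{aalfab} with $B = A_x$, which belongs to $P_{cl,b}(X)$ by the weakly-Picard theorem of this section. This immediately yields $\lim_{n \to \infty} \overline{f_{[\alpha]_n}(A_x)} = a_\alpha(A_x)$ in $(P_{cl,b}(X),h)$. Then Lemma \ref{aalfaB} rewrites the limit as $a_\alpha(A_x) = \overline{\bigcup_{y \in A_x}\{a_\alpha(y)\}}$, so the problem reduces to showing that $a_\alpha$ is constant on $A_x$ with value $a_\alpha(x)$.

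For this, the key observation is that $A_x \subset \overline{\mathcal{O}(x)}$: any $z \in A_x$ can be approximated by points of $F_{\mathcal{S}}^n(\{x\}) \subset \mathcal{O}(x)$ since $h(F_{\mathcal{S}}^n(\{x\}), A_x) \to 0$. Invoking Lemma \ref{egalpeorb} then gives $a_\alpha(y) = a_\alpha(x)$ for every $y \in A_x$, so
\begin{equation*}
a_\alpha(A_x) = \overline{\{a_\alpha(x)\}} = \{a_\alpha(x)\},
\end{equation*}
and combining with the first step yields $\lim_{n \to \infty} A_{[\alpha]_n,x} = \{a_\alpha(x)\}$, as required.

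The argument is essentially a synthesis of already established results (Proposition \ref{aalfab}, Lemmas \ref{aalfaB} and \ref{egalpeorb}) plus the elementary inclusion $A_x \subset \overline{\mathcal{O}(x)}$, so I do not foresee any serious obstacle; the only minor point worth stating explicitly is the identification of the set-valued limit $\{a_\alpha(x)\}$ with the point $a_\alpha(x)$.
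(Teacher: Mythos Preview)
Your argument is correct, but it follows a genuinely different route from the paper's. The paper argues via a Cantor-type nested intersection: it first notes that the sets $\overline{A_{[\alpha]_{n},x}}$ are nested decreasing and, by the previous proposition, have diameters tending to $0$, so their intersection is a singleton $\{c_\alpha(x)\}$; it then identifies $c_\alpha(x)$ with $a_\alpha(x)$ by writing $\alpha=[\alpha]_n\omega_n$ and using relation~(\ref{aialfa}) to show $a_\alpha(x)=f_{[\alpha]_n}(a_{\omega_n}(x))\in A_{[\alpha]_n,x}$ for every $n$. Your proof bypasses both the nesting and the decomposition of $\alpha$: applying Proposition~\ref{aalfab} directly to $B=A_x$ gives the limit $a_\alpha(A_x)$ at once, and Lemma~\ref{egalpeorb} together with the inclusion $A_x\subset\overline{\mathcal{O}(x)}$ collapses it to $\{a_\alpha(x)\}$. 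Your approach is a clean synthesis of earlier lemmas (and incidentally does not even use the preceding diameter proposition), while the paper's argument is more explicit about the geometric picture of shrinking nested images and makes visible the identity $a_\alpha(x)\in A_{[\alpha]_n,x}$ for all $n$.
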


\begin{proof}
Let $x\in X$ and $\alpha \in \Lambda \left( I\right) $. Easily, one can
prove that $\overline{A_{[\alpha ]_{n+1},x}}\subset \overline{A_{[\alpha
]_{n},x}}$ for every $n\in 
\mathbb{N}
$. Using the fact that $\displaystyle\lim_{n\rightarrow \infty }diam\left(
A_{[\alpha ]_{n},x}\right) =0$ it results that there exists an element $%
c_{\alpha }\left( x\right) \in X$ such that $\displaystyle\cap _{n\geq 1}%
\overline{A_{[\alpha ]_{n},x}}=\left\{ c_{\alpha }\left( x\right) \right\} $%
. $\displaystyle$Thus, $\displaystyle\lim_{n\rightarrow \infty }\overline{%
f_{[\alpha ]_{n}}\left( A_{x}\right) }$ $=\left\{ c_{\alpha }\left( x\right)
\right\} $. We prove that $c_{\alpha }\left( x\right) =a_{\alpha }\left(
x\right) $. We consider $\alpha =\alpha _{1}\alpha _{2}...\alpha _{n}\alpha
_{n+1}...$ and $\omega _{n}=\alpha _{n+1}\alpha _{n+2}....$ $\in \Lambda
\left( I\right) $ for every $n\in 
\mathbb{N}
^{\ast }$. As $a_{\omega _{n}}\left( x\right) \in \left\{ a_{\alpha }\left(
x\right) \text{ }|\text{ }\alpha \in \Lambda \left( I\right) \right\} $ for
every $n\in 
\mathbb{N}
^{\ast }$, taking into account relation (\ref{Ax}) we deduce $f_{[\alpha
]_{n}}\left( a_{\omega _{n}}\left( x\right) \right) \in A_{[\alpha ]_{n},x}$
for every $n\in 
\mathbb{N}
^{\ast }$. But $f_{[\alpha ]_{n}}\left( a_{\omega _{n}}\left( x\right)
\right) =a_{\alpha }\left( x\right) $ for every $n\in 
\mathbb{N}
^{\ast }$ and we obtain $a_{\alpha }\left( x\right) \in A_{[\alpha ]_{n},x}$
for all $n\in 
\mathbb{N}
^{\ast }$. Therefore, $\left\{ a_{\alpha }\left( x\right) \right\} \subset %
\displaystyle\cap _{n\geq 1}\overline{A_{[\alpha ]_{n},x}}=\left\{ c_{\alpha
}\left( x\right) \right\} $ and in conclusion $c_{\alpha }\left( x\right)
=a_{\alpha }\left( x\right) $.
\end{proof}

\begin{theorem}
\label{prcan}Let $\mathcal{S}=\left( \left( X,d\right) ,\left( f_{i}\right)
_{i\in I}\right) $ be a pcIIFS. Then the function $\Theta :\Lambda
^{t}\left( I\right) \times P_{cl,b}\left( X\right) \rightarrow
P_{cl,b}\left( X\right) $ defined by 
\begin{equation*}
\Theta \left( \alpha ,B\right) =\left\{ 
\begin{array}{c}
a_{\alpha }\left( B\right) \text{, if }\alpha \in \Lambda \left( I\right) \\ 
f_{\alpha }\left( B\right) \text{, if }\alpha \in \Lambda ^{\ast }\left(
I\right) \setminus \left\{ \lambda \right\} \\ 
B\text{, if }\alpha =\lambda%
\end{array}%
\right.
\end{equation*}%
for all $\left( \alpha ,B\right) \in \Lambda ^{t}\left( I\right) \times
P_{cl,b}\left( X\right) $ is uniformly continuous on bounded sets. In
particular, $\Theta $ is continuous.
\end{theorem}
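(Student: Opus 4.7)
The strategy is to show that $\Theta(\alpha,B)$ is uniformly well-approximated by a ``truncated'' composition $f_{[\alpha]_{m}}(B)$ when $m$ is large, and then to exploit the equal uniform continuity of the finite family of iterates of length at most $m$. I would first reduce to uniform continuity on a fixed bounded set $\mathcal{B}\subset\Lambda^{t}(I)\times P_{cl,b}(X)$: since $(\Lambda^{t}(I),d_{c})$ has diameter at most $c/(1-c)$, boundedness of $\mathcal{B}$ reduces to boundedness in the second coordinate, so there exists $M\in P_{b}(X)$ with $B\subset M$ for every $(\alpha,B)\in\mathcal{B}$. By Claim 3 of the preceding theorem, $\overline{\mathcal{O}(M)}$ is bounded; set $D=diam(\mathcal{O}(M))$.

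The key approximation I would prove is: for every $\gamma\in\Lambda^{t}(I)$ with $|\gamma|\geq m$ and every $B\in P_{cl,b}(M)$,
$$h(\Theta(\gamma,B),f_{[\gamma]_{m}}(B))\leq\sum_{k\geq m}\varphi^{k}(D).$$
For $\gamma\in\Lambda(I)$ this is immediate from (\ref{aalfa}) combined with $diam(\mathcal{O}(B))\leq D$. For $\gamma\in\Lambda^{\ast}(I)\setminus\{\lambda\}$ with $|\gamma|\geq m$, I would iterate the parent-child condition (\ref{pc}): for each $b\in B$ a telescoping sum along $[\gamma]_{m},[\gamma]_{m+1},\ldots,\gamma$ gives
$$d(f_{[\gamma]_{m}}(b),f_{\gamma}(b))\leq\sum_{k=m}^{|\gamma|-1}\varphi^{k}(d(b,f_{\gamma_{k+1}}(b)))\leq\sum_{k\geq m}\varphi^{k}(D),$$
and the pointwise correspondence $b\mapsto(f_{[\gamma]_{m}}(b),f_{\gamma}(b))$ controls both suprema in the definition of $h$.

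Given $\varepsilon>0$, I would pick $m_{0}$ with $\sum_{k\geq m_{0}}\varphi^{k}(D)<\varepsilon/3$ (summability of $\varphi$), and use Remark \ref{MB} to select $\delta_{1}>0$ such that $B,C\subset\overline{\mathcal{O}(M)}$ with $h(B,C)<\delta_{1}$ forces $h(f_{\omega}(B),f_{\omega}(C))<\varepsilon/3$ for every $\omega\in\Lambda_{k}(I)$, $0\leq k\leq m_{0}$ (and, trivially, for $\omega=\lambda$). Then set $\delta=\min(\delta_{1},c^{m_{0}+1})$. Inspecting the $\tau$-padding in $\iota$, the constraint $d_{c}(\alpha,\beta)<c^{m_{0}+1}$ splits cleanly into a dichotomy: either $\min(|\alpha|,|\beta|)<m_{0}$, in which case any discrepancy would contribute at least $c^{m_{0}}>c^{m_{0}+1}$ to $d_{c}$, forcing $\beta=\alpha$ and reducing the estimate to the choice of $\delta_{1}$; or $|\alpha|,|\beta|\geq m_{0}$, in which case the same argument on positions $\leq m_{0}$ forces $[\alpha]_{m_{0}}=[\beta]_{m_{0}}$. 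In the latter case the triangle inequality
$$h(\Theta(\alpha,B),\Theta(\beta,C))\leq h(\Theta(\alpha,B),f_{[\alpha]_{m_{0}}}(B))+h(f_{[\alpha]_{m_{0}}}(B),f_{[\alpha]_{m_{0}}}(C))+h(f_{[\beta]_{m_{0}}}(C),\Theta(\beta,C))$$
bounds each term by $\varepsilon/3$ via the approximation lemma (first and third) and the equal uniform continuity (middle), yielding the total $\varepsilon$.

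The main obstacle is reconciling the two quite different descriptions of $\Theta$ — the limit $a_{\gamma}(B)$ on $\Lambda(I)$ versus the composition $f_{\gamma}(B)$ on $\Lambda^{\ast}(I)$ — into a single uniform estimate. The approximation lemma is the bridge: it compares both to the common approximant $f_{[\gamma]_{m}}(B)$ with the same summable tail bound. Crucially, the estimate for finite $\gamma$ requires the full strength of the parent-child contractivity (\ref{pc}) rather than merely the equal uniform continuity from Definition \ref{iifs1}, so the argument is genuinely pcIIFS-specific. The ``in particular, continuous'' clause is then immediate since every point of $\Lambda^{t}(I)\times P_{cl,b}(X)$ has a bounded neighborhood.
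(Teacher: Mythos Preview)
Your proof is correct and follows essentially the same strategy as the paper's: both prove a uniform approximation $h(\Theta(\gamma,B),f_{[\gamma]_m}(B))\leq\sum_{k\geq m}\varphi^{k}(diam(\mathcal{O}(M)))$ valid for all $\gamma\in\Lambda^{t}(I)$, then combine a triangle inequality through $f_{[\alpha]_m}(B)$ and $f_{[\beta]_m}(C)$ with the equal uniform continuity of finite compositions on $M$. The only cosmetic differences are that the paper uses a four-term triangle inequality (carrying an explicit term $h(f_{[\alpha]_m}(B),f_{[\beta]_m}(B))$ that vanishes once $[\alpha]_m=[\beta]_m$) and absorbs your short-word dichotomy into the approximation estimate itself (for $|\alpha|\leq m$ it records $h=0$ directly), whereas you split into the two cases explicitly; your treatment of the uniform choice of $\delta_1$ via Remark~\ref{MB} for all $|\omega|\leq m_0$ is in fact a bit more carefully stated than the paper's, which invokes uniform continuity of a single $f_{[\alpha]_m}$ without spelling out the uniformity over $\alpha$.
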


\begin{proof}
For $\alpha \in \Lambda ^{\ast }(I)$ we make the notation $a_{\alpha }\left(
B\right) :=f_{\alpha }\left( B\right) $. Let us consider $M\in P_{cl,b}(X)$, 
$B$,$C\in P_{cl,b}(M)\subset P_{cl,b}(X)$, $\alpha $,$\beta \in \Lambda
^{t}(I)$ and $\varepsilon >0$.

Remarks:

1) If $\alpha \in \Lambda (I)$ then using (\ref{aalfa}) we have%
\begin{equation*}
h\left( \Theta \left( \alpha ,B\right) ,f_{[\alpha ]_{m}}\left( B\right)
\right) =h\left( a_{\alpha }\left( B\right) ,f_{[\alpha ]_{m}}\left(
B\right) \right) \leq \sum_{k=m}^{\infty }\varphi ^{k}\left( diam\left( 
\mathcal{O}\left( M\right) \right) \right)
\end{equation*}%
for every $m\in 
\mathbb{N}
$.

2) If $\alpha \in \Lambda ^{\ast }(I)$ then using again (\ref{aalfa}) we have%
\begin{equation*}
h\left( \Theta \left( \alpha ,B\right) ,f_{[\alpha ]_{m}}\left( B\right)
\right) =h\left( f_{\alpha }\left( B\right) ,f_{[\alpha ]_{m}}\left(
B\right) \right)
\end{equation*}%
\begin{equation*}
\leq \left\{ 
\begin{array}{c}
0\text{ if }|\alpha |\leq m \\ 
\displaystyle\sum_{k=m}^{|\alpha |}\varphi ^{k}\left( diam\left( \mathcal{O}%
\left( M\right) \right) \right)%
\end{array}%
\right. \leq \sum_{k=m}^{\infty }\varphi ^{k}\left( diam\left( \mathcal{O}%
\left( M\right) \right) \right)
\end{equation*}%
for every $m\in 
\mathbb{N}
$. Therefore, for $\alpha ,\beta \in \Lambda ^{t}(I)$ and $B,C\in
P_{cl,b}(M) $, using triangle inequality we obtain%
\begin{equation*}
h\left( \Theta \left( \alpha ,B\right) ,\Theta \left( \beta ,C\right)
\right) =h\left( a_{\alpha }\left( B\right) ,a_{\beta }\left( C\right)
\right) \leq h\left( a_{\alpha }\left( B\right) ,f_{[\alpha ]_{m}}\left(
B\right) \right)
\end{equation*}%
\begin{equation*}
+h\left( f_{[\alpha ]_{m}}\left( B\right) ,f_{[\beta ]_{m}}\left( B\right)
\right) +h\left( f_{[\beta ]_{m}}\left( B\right) ,f_{[\beta ]_{m}}\left(
C\right) \right) +h\left( f_{[\beta ]_{m}}\left( C\right) ,a_{\beta }\left(
C\right) \right)
\end{equation*}%
for every $m\in 
\mathbb{N}
$. Applying the above remarks we deduce%
\begin{equation*}
h\left( \Theta \left( \alpha ,B\right) ,\Theta \left( \beta ,C\right)
\right) \leq \sum_{k=m}^{\infty }\varphi ^{k}\left( diam\left( \mathcal{O}%
\left( M\right) \right) \right) +h\left( f_{[\alpha ]_{m}}\left( B\right)
,f_{[\beta ]_{m}}\left( B\right) \right)
\end{equation*}%
\begin{equation}
+h\left( f_{[\beta ]_{m}}\left( B\right) ,f_{[\beta ]_{m}}\left( C\right)
\right) +\sum_{k=m}^{\infty }\varphi ^{k}\left( diam\left( \mathcal{O}\left(
M\right) \right) \right)  \label{ineg1}
\end{equation}%
for every $m\in 
\mathbb{N}
$. We take $m_{\varepsilon }$ such that 
\begin{equation}
\sum_{k=m}^{\infty }\varphi ^{k}\left( diam\left( \mathcal{O}\left( M\right)
\right) \right) <\frac{\varepsilon }{3}  \label{ineg2}
\end{equation}%
for every $m\geq m_{\varepsilon }$. Let us fix $m\geq m_{\varepsilon }$. If $%
d_{c}(\alpha ,\beta )<c^{m}$ we have $[\alpha ]_{m}=[\beta ]_{m}$ which
implies 
\begin{equation}
h\left( f_{[\alpha ]_{m}}\left( B\right) ,f_{[\beta ]_{m}}\left( B\right)
\right) =0\text{.}  \label{ineg3}
\end{equation}%
As $M$ is bounded, the function $f_{[\alpha ]_{m}}$ is uniformly continuous
on $M$. So, there exists $\delta _{\varepsilon }>0$ such that for all $%
x,y\in M$ with $d(x,y)<\delta _{\varepsilon }$ we have%
\begin{equation*}
d(f_{[\beta ]_{m}}(x),f_{[\beta ]_{m}}(y))<\frac{\varepsilon }{3}\text{.}
\end{equation*}%
Hence, for $h(B,C)<\delta _{\varepsilon }$ we deduce that%
\begin{equation}
h(f_{[\beta ]_{m}}(B),f_{[\beta ]_{m}}(C))\leq \frac{\varepsilon }{3}\text{.}
\label{ineg4}
\end{equation}%
Therefore, if $d_{c}(\alpha ,\beta )<c^{m}$ and $h(B,C)<\delta _{\varepsilon
}$, using relations (\ref{ineg1}), (\ref{ineg2}), (\ref{ineg3}) and (\ref%
{ineg4}) it results 
\begin{equation*}
h\left( \Theta \left( \alpha ,B\right) ,\Theta \left( \beta ,C\right)
\right) <\frac{\varepsilon }{3}+0+\frac{\varepsilon }{3}+\frac{\varepsilon }{%
3}=\varepsilon \text{.}
\end{equation*}%
In conclusion, $\Theta $ is uniformly continuous on $M$.
\end{proof}

\begin{corollary}
\label{cor1}Let $\mathcal{S}=\left( \left( X,d\right) ,\left( f_{i}\right)
_{i\in I}\right) $ be a pcIIFS. We consider the functions $g:X\rightarrow
P_{cl,b}\left( X\right) $ defined by $g\left( x\right) =\left\{ x\right\} $
for every $x\in X$ and $h:g\left( X\right) \rightarrow X$ defined by $%
h\left( \left\{ x\right\} \right) =x.$ We now consider a function $\pi
:\Lambda \left( I\right) \times X\rightarrow $ $P_{cl,b}\left( X\right) $
given by $\pi =h\circ \Theta \circ \left( Id_{\Lambda \left( I\right)
}\times g\right) $. This function is uniformly continuous on bounded sets.
\end{corollary}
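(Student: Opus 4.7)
The plan is to exhibit $\pi$ as a composition of three maps, each uniformly continuous on bounded sets, and then use that such maps are closed under composition.

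First, I would observe that the map $g:X\to P_{cl,b}(X)$, $g(x)=\{x\}$, is an isometric embedding, since $h(\{x\},\{y\})=d(x,y)$ for all $x,y\in X$; its set-theoretic inverse on the image, namely $h:g(X)\to X$, is therefore also an isometry, hence uniformly continuous. Equipping products with $d_{\max}$, it follows that $Id_{\Lambda(I)}\times g$ is an isometry from $(\Lambda(I)\times X,d_{\max})$ onto $(\Lambda(I)\times g(X),d_{\max})$. In particular, if $K\subseteq\Lambda(I)\times X$ is bounded and $B\in P_b(X)$ contains its projection onto $X$, then $(Id_{\Lambda(I)}\times g)(K)\subseteq\Lambda(I)\times g(B)$, and $g(B)\subseteq P_{cl,b}(B)$ is bounded in $(P_{cl,b}(X),h)$.

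Next, by Theorem \ref{prcan} the map $\Theta$ is uniformly continuous on bounded subsets of $\Lambda^t(I)\times P_{cl,b}(X)$. For inputs of the form $(\alpha,\{x\})$ with $\alpha\in\Lambda(I)$ one has $\Theta(\alpha,\{x\})=a_\alpha(\{x\})=\{a_\alpha(x)\}$ by the remark following Proposition \ref{aalfab}, so $\Theta$ sends $\Lambda(I)\times g(X)$ into $g(X)$, which ensures that the outer map $h$ is well defined on the image of $\Theta\circ(Id_{\Lambda(I)}\times g)$ and that $\pi(\alpha,x)=a_\alpha(x)\in X$.

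Given a bounded $K\subseteq\Lambda(I)\times X$, we combine these facts: $Id_{\Lambda(I)}\times g$ is a global isometry (so it preserves both boundedness and uniform moduli), $\Theta$ is uniformly continuous on the bounded set $(Id_{\Lambda(I)}\times g)(K)\subseteq\Lambda(I)\times P_{cl,b}(B)$, and $h$ is a global isometry on $g(X)$. Composing the three uniform moduli yields uniform continuity of $\pi$ on $K$. The only delicate bookkeeping is checking that $\Theta$ really takes singletons to singletons (so that the composition lands in the domain of $h$) and that boundedness on the product is transferred correctly under $g$; no substantive obstacle arises because Theorem \ref{prcan} has already done the analytic work.
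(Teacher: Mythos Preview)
Your proposal is correct and is exactly the argument the paper has in mind: the corollary is stated without proof because it follows immediately from Theorem~\ref{prcan} via the composition you describe, using that $g$ and $h$ are isometries and that $\Theta$ takes singleton inputs to singleton outputs. Your care in checking that boundedness is preserved through $Id_{\Lambda(I)}\times g$ and that the image lands in $g(X)$ makes the implicit reasoning explicit, but there is no difference in approach.
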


\begin{remark}
1) We note that 
\begin{equation*}
\pi \left( \alpha ,x\right) =h\circ \Theta \circ \left( Id_{\lambda }\times
g\right) \left( \alpha ,x\right) =h\circ \Theta \left( \alpha ,\left\{
x\right\} \right) =h\left( \left\{ a_{\alpha }\left( x\right) \right\}
\right) =a_{\alpha }\left( x\right) \text{.}
\end{equation*}

2) If $S$ has only one attractor, then $\pi $ is independent of $x$ and it
represents the canonical projection for a classical IIFS.
\end{remark}

\begin{corollary}
\label{cor2}Let $\mathcal{S}=\left( \left( X,d\right) ,\left( f_{i}\right)
_{i\in I}\right) $ be a pcIIFS. Let us consider the function $Id_{\Lambda
^{t}\left( I\right) }\times F:\Lambda ^{t}\left( I\right) \times
P_{cl,b}\left( X\right) \rightarrow \Lambda ^{t}\left( I\right) \times
P_{cl,b}\left( X\right) $ defined by%
\begin{equation*}
Id_{\lambda }\times F\left( \alpha ,B\right) =\left( \alpha ,A_{B}\right)
\end{equation*}%
for all $\left( \alpha ,B\right) \in \Lambda ^{t}\left( I\right) \times
P_{cl,b}\left( X\right) $. We define another function $\Psi :\Lambda
^{t}\left( I\right) \times P_{cl,b}\left( X\right) \rightarrow
P_{cl,b}\left( X\right) $ given by $\Psi =\Theta \circ Id_{\lambda }\times F$%
. The function $\Psi $ is continuous.
\end{corollary}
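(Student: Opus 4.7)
The plan is to write $\Psi$ explicitly as a composition of two maps whose continuity is already known, so that continuity of $\Psi$ follows from the standard fact that the composition of continuous maps is continuous.

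First, observe that the map $Id_{\Lambda^{t}(I)}\times F : \Lambda^{t}(I)\times P_{cl,b}(X) \to \Lambda^{t}(I)\times P_{cl,b}(X)$, where the domain and codomain are equipped with the product metric $d_{\max}$ induced by $d_{c}$ and $h$, is continuous. Indeed, if $(\alpha_{n},B_{n})\to (\alpha,B)$ in $d_{\max}$, then $d_{c}(\alpha_{n},\alpha)\to 0$ and $h(B_{n},B)\to 0$. Proposition \ref{Fcont} then yields $h(A_{B_{n}},A_{B})=h(F(B_{n}),F(B))\to 0$, so $(\alpha_{n},A_{B_{n}})\to (\alpha,A_{B})$ in $d_{\max}$.

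Next, by Theorem \ref{prcan}, $\Theta$ is uniformly continuous on bounded sets and in particular continuous on $\Lambda^{t}(I)\times P_{cl,b}(X)$.

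Since $\Psi = \Theta \circ (Id_{\Lambda^{t}(I)}\times F)$ is the composition of two continuous maps, it is continuous.
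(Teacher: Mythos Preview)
Your proof is correct and follows exactly the approach implicit in the paper: the corollary is stated without proof precisely because it is an immediate consequence of composing the continuous map $Id_{\Lambda^{t}(I)}\times F$ (continuity from Proposition~\ref{Fcont}) with the continuous map $\Theta$ (Theorem~\ref{prcan}).
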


\begin{remark}
We note that%
\begin{equation*}
\Psi \left( \alpha ,B\right) =\left( \Theta \circ Id_{\lambda }\times
F\right) \left( \alpha ,B\right) =\Theta \left( \alpha ,A_{B}\right)
\end{equation*}%
\begin{equation*}
=\left\{ 
\begin{array}{c}
a_{\alpha }(A_{B})\text{ if }\alpha \in \Lambda \left( I\right) \\ 
f_{\alpha }\left( A_{B}\right) \text{ if }\alpha \in \Lambda ^{\ast }\left(
I\right) \diagdown \left\{ \lambda \right\} \\ 
A_{B}\text{ if }\alpha =\lambda%
\end{array}%
\right.
\end{equation*}
for all $\left( \alpha ,B\right) \in \Lambda ^{t}\left( I\right) \times
P_{cl,b}\left( X\right) $.
\end{remark}

\begin{corollary}
\label{cor3}Let $\mathcal{S}=\left( \left( X,d\right) ,\left( f_{i}\right)
_{i\in I}\right) $ be a pcIIFS. We consider the functions $g:X\rightarrow
P_{cl,b}\left( X\right) $ defined by $g\left( x\right) =\left\{ x\right\} $
for every $x\in X$ and $h:g\left( X\right) \rightarrow X$ defined by $%
h\left( \left\{ x\right\} \right) =x.$ Then the function $\pi ^{t}:\Lambda
^{t}\left( I\right) \times X\rightarrow $ $P_{cl,b}\left( X\right) $ given
by $\pi ^{t}=h\circ \Theta \circ \left( Id_{\Lambda ^{t}\left( I\right)
}\times g\right) $ is continuous.
\end{corollary}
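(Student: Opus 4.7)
The plan is to deduce this corollary directly from Theorem \ref{prcan}, where continuity of $\Theta$ has already been established; the remaining factors $Id_{\Lambda^t(I)}\times g$ and $h$ are of a trivial nature, so the main content of the corollary lies in its well-definedness.

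First I would observe that $g:X\to P_{cl,b}(X)$, $g(x)=\{x\}$, is an isometric embedding, since $h(\{x\},\{y\})=d(x,y)$ for every $x,y\in X$. Therefore $Id_{\Lambda^t(I)}\times g$ is uniformly continuous from $\Lambda^t(I)\times X$ to $\Lambda^t(I)\times P_{cl,b}(X)$ with respect to the product metric $d_{\max}$. Similarly, $h:g(X)\to X$ is an isometry and hence uniformly continuous on its domain.

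Next I would check that the composition $\Theta\circ (Id_{\Lambda^t(I)}\times g)$ actually takes values in $g(X)$, which is what makes the postcomposition with $h$ meaningful. Using the definition of $\Theta$ from Theorem \ref{prcan}, for each $(\alpha,x)\in \Lambda^t(I)\times X$ we have $\Theta(\alpha,\{x\})=\{x\}$ if $\alpha=\lambda$, $\Theta(\alpha,\{x\})=f_\alpha(\{x\})=\{f_\alpha(x)\}$ if $\alpha\in \Lambda^\ast(I)\setminus\{\lambda\}$, and, by the remark following Proposition \ref{aalfab}, $\Theta(\alpha,\{x\})=a_\alpha(\{x\})=\{a_\alpha(x)\}$ if $\alpha\in \Lambda(I)$. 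In every case the output is a singleton, so $h$ may be applied and the composition is well-defined.

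Finally, since $\Theta$ is continuous by Theorem \ref{prcan}, the map $\pi^t=h\circ \Theta\circ (Id_{\Lambda^t(I)}\times g)$ is a composition of continuous maps between metric spaces and is therefore continuous; on each bounded subset of $\Lambda^t(I)\times X$ one in fact gets uniform continuity, mirroring the conclusion of Corollary \ref{cor1}. The only delicate point is the case analysis guaranteeing that $h$ is defined on the image; no genuine obstacle arises, as the heavy lifting has been carried out in Theorem \ref{prcan}.
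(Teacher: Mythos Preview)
Your proposal is correct and matches the paper's approach: the paper states this result as a corollary of Theorem~\ref{prcan} with no explicit proof, treating it as immediate from the continuity of $\Theta$ together with the trivially isometric maps $g$ and $h$. Your added verification that $\Theta(\alpha,\{x\})$ is always a singleton (so that $h$ is applicable) fills in the well-definedness detail the paper leaves implicit.
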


\begin{remark}
We note that 
\begin{equation*}
\pi ^{t}\left( \alpha ,x\right) =h\circ \Theta \circ \left( Id_{\Lambda
^{t}\left( I\right) }\times g\right) \left( \alpha ,x\right) =h\circ \Theta
\left( \alpha ,\left\{ x\right\} \right)
\end{equation*}%
\begin{equation*}
=\left\{ 
\begin{array}{c}
a_{\alpha }\left( x\right) \text{, if }\alpha \in \Lambda \left( I\right) \\ 
f_{\alpha }\left( x\right) \text{, if }\alpha \in \Lambda ^{\ast }\left(
I\right) \setminus \left\{ \lambda \right\} \\ 
x\text{, if }\alpha =\lambda%
\end{array}%
\right.
\end{equation*}%
$\ \ $ \ \ \ \ for all $\left( \alpha ,x\right) \in \Lambda ^{t}\left(
I\right) \times X$.
\end{remark}

\begin{remark}
\label{rem}Let us consider $\mathcal{S}=\left( \left( X,d\right) ,\left(
f_{i}\right) _{i\in I}\right) $ a pcIIFS such that the fractal operator
associated to $\mathcal{S}$ is a Picard operator. We denote its fixed point
by $A$. For every $i\in I$ we consider the functions $F_{i}:\Lambda \left(
I\right) \rightarrow \Lambda \left( I\right) $ defined by $F_{i}(\alpha
)=i\alpha $, for all $\alpha \in \Lambda ^{t}\left( I\right) $ and $%
F_{i}^{t}:\Lambda ^{t}\left( I\right) \times X\rightarrow \Lambda ^{t}\left(
I\right) \times X$ given by $F_{i}^{t}\left( \alpha ,x\right) =\left(
i\alpha ,x\right) $ for all $\left( \alpha ,x\right) \in \Lambda ^{t}\left(
I\right) \times X$. Then the following diagrams are commutative:

\ \ \ \ \ $\ \ \ \ \ \ \ \Lambda \left( I\right) \QDATOPD. . {%
\underrightarrow{\hspace{0.29in}{\LARGE F}_{i}\hspace{0.29in}}}{{}}\Lambda
\left( I\right) $ \ \ \ \ \ \ \ \ \ \ \ \ \ \ \ \ $\Lambda ^{t}\left(
I\right) \times X\QDATOPD. . {\underrightarrow{\hspace{0.29in}{\LARGE F}%
_{i}^{t}\hspace{0.29in}}}{{}}\Lambda ^{t}\left( I\right) \times X$

\ \ $\ \ \ \ \ \ \left. 
\begin{tabular}{l}
\\ 
\\ 
\end{tabular}%
\right\downarrow \pi \qquad \hspace{0.2in}$ $\ \ \ \left. 
\begin{tabular}{l}
\\ 
\\ 
\end{tabular}%
\right\downarrow \pi $ \ \ \ \ \ \ \ \ \ \ \ \ \ \ \ \ \ $\ \ \left. 
\begin{tabular}{l}
\\ 
\\ 
\end{tabular}%
\right\downarrow \pi ^{t}\qquad \ \ \hspace{0.2in}\left. 
\begin{tabular}{l}
\\ 
\\ 
\end{tabular}%
\right\downarrow \pi ^{t}$

$\ \ \ \ \ \ \ \ \ \ \ \ A\QDATOPD. . {\text{ \ \ }\underrightarrow{\hspace{%
0.29in}{\Large f}_{i}\hspace{0.29in}}\text{ }}{{}}A\ \ \ \ \ \ \ \ \ \ \ \ \
\ \ \ \ \ \ \ \ \ \ \ \ X\QDATOPD. . {\text{ \ }\underrightarrow{\hspace{%
0.29in}{\Large f}_{i}\hspace{0.29in}\text{ }}}{{}}~X$ $\ \ $

This fact reveals that as $S_{\Lambda \left( I\right) }=\left( \Lambda
\left( I\right) ,\left( F_{i}\right) _{i\in I}\right) $ is a universal model
for the pcIIFS $S$ restricted to its fixed point, the system $S_{\Lambda
^{t}\left( I\right) }=(\Lambda ^{t}\left( I\right) \times X,\left(
F_{i}^{t}\right) _{i\in I})$ is a universal model for the pcIIFS $S$.
\end{remark}

\section{{\protect\Large Orbital }$\protect\varphi -${\protect\Large %
contractive infinite iterated function systems (oIIFSs)}}

\begin{proposition}
\label{proph}[see \cite{Str}] Let $\mathcal{S}=\left( \left( X,d\right)
,\left( f_{i}\right) _{i\in I}\right) $ be an oIIFS and for every $x\in X$
we consider the IIFS denoted by $\mathcal{S}_{x}=\left( \left( \overline{%
\mathcal{O}\left( x\right) },d\right) ,\left( f_{i}\right) _{i\in I}\right) $%
. Then $\mathcal{S}_{x}$ is a system consisting of $\varphi -$contractions
and 
\begin{equation*}
h\left( F_{\mathcal{S}_{x}}\left( B\right) ,F_{\mathcal{S}_{x}}\left(
C\right) \right) \leq \varphi \left( h\left( B,C\right) \right)
\end{equation*}%
for every $B$,$C\in P_{cl,b}\left( \overline{\mathcal{O}\left( x\right) }%
\right) $, where $F_{\mathcal{S}_{x}}$ is the fractal operator associated to
the system $\mathcal{S}_{x}$.
\end{proposition}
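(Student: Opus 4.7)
My plan is to prove the three assertions of the proposition in turn: first that $\mathcal{S}_x$ really is an IIFS on the subspace $\overline{\mathcal{O}(x)}$; then that each $f_i$ restricted to this subspace is a $\varphi$-contraction; and finally the Hausdorff--Pompeiu estimate for $F_{\mathcal{S}_x}$.

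For the first step, I would observe that $f_i(\mathcal{O}(x)) \subset \mathcal{O}(x)$: if $y = f_\omega(x)$ with $\omega \in \Lambda^{\ast}(I)$, then $f_i(y) = f_{i\omega}(x) \in \mathcal{O}(x)$. Continuity of $f_i$ then propagates this invariance to $\overline{\mathcal{O}(x)}$, and the remaining IIFS axioms from Definition~\ref{iifs1} (continuity, equal uniform continuity on bounded sets, boundedness of the family) are inherited from $\mathcal{S}$ by restriction to a bounded subset.

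For the second step, the orbital hypothesis gives $d(f_i(y), f_i(z)) \le \varphi(d(y,z))$ for $y, z \in \mathcal{O}(x)$; I extend this to $y, z \in \overline{\mathcal{O}(x)}$ by density. Picking $y_n \to y$, $z_n \to z$ in $\mathcal{O}(x)$, continuity of $f_i$ gives $d(f_i(y_n), f_i(z_n)) \to d(f_i(y), f_i(z))$ while $d(y_n,z_n) \to d(y,z)$. Then $\limsup_n \varphi(d(y_n,z_n)) \le \varphi(d(y,z))$: split the sequence according to whether $d(y_n,z_n)$ lies above or below $d(y,z)$, using monotonicity of $\varphi$ on the lower part and right-continuity on the upper part.

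For the third step, using (\ref{hpeinch}) and (\ref{sup}) reduces matters to showing $h(f_i(B), f_i(C)) \le \varphi(h(B,C))$ uniformly in $i$. I estimate
\begin{equation*}
d(f_i(B), f_i(C)) \;=\; \sup_{y \in B} \inf_{z \in C} d(f_i(y), f_i(z)) \;\le\; \sup_{y \in B} \inf_{z \in C} \varphi(d(y,z)).
\end{equation*}
Choosing $z_n \in C$ with $d(y,z_n) \downarrow \inf_{z \in C} d(y,z)$ and invoking right-continuity of $\varphi$, one obtains $\inf_{z \in C} \varphi(d(y,z)) \le \varphi(\inf_{z \in C} d(y,z))$; monotonicity then allows pulling $\varphi$ outside the outer supremum, yielding the bound $\varphi(d(B,C)) \le \varphi(h(B,C))$. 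The symmetric estimate for $d(f_i(C), f_i(B))$ completes the Hausdorff--Pompeiu inequality.

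The main obstacle throughout is the handling of $\varphi$: since it is only increasing and right-continuous, not continuous, every passage to a limit (in the density argument of step two, and in the swap of $\varphi$ with the infimum in step three) requires a careful splitting of sequences into parts approaching the limit from above and from below, with monotonicity controlling one side and right-continuity the other.
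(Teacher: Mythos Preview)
The paper does not give its own proof of this proposition; it simply records the statement with the attribution ``[see \cite{Str}]'' and moves on to use it. So there is nothing to compare against, and the relevant question is whether your argument stands on its own.

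It does. Your three-step decomposition is the natural one, and the delicate points are handled correctly. In step~2 the passage from $\mathcal{O}(x)$ to $\overline{\mathcal{O}(x)}$ via $\limsup_n \varphi(d(y_n,z_n)) \le \varphi(d(y,z))$ is exactly right: monotonicity controls the terms with $d(y_n,z_n) \le d(y,z)$, right-continuity the terms strictly above. In step~3 the swap $\inf_{z\in C}\varphi(d(y,z)) \le \varphi(\inf_{z\in C} d(y,z))$ is justified by choosing $z_n$ with $d(y,z_n)$ descending to the infimum and invoking right-continuity, after which monotonicity lets you pull $\varphi$ past the outer supremum; then (\ref{hpeinch}) and (\ref{sup}) finish the job. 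One cosmetic remark: in step~1 you write ``restriction to a bounded subset'', but at this stage of the paper the boundedness of $\overline{\mathcal{O}(x)}$ has not yet been established (it is derived \emph{from} this proposition in the remark that follows). Fortunately the IIFS axioms of Definition~\ref{iifs1} concern behaviour on bounded subsets of the ambient space and are inherited by restriction to \emph{any} closed subspace, bounded or not, so the argument is unaffected; just adjust the wording.
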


\begin{remark}
Applying Proposition \ref{proph} we obtain that $F_{\mathcal{S}_{x}}$ is a $%
\varphi $-contraction on $P_{cl,b}\left( \overline{\mathcal{O}\left(
x\right) }\right) $. As $\left( X,d\right) $ is a complete metric space it
results that $\left( \overline{\mathcal{O}\left( x\right) },d\right) $ is a
complete metric space. From Proposition \ref{complet} and Theorem \ref%
{thmficontr} we deduce that $F_{\mathcal{S}_{x}}$ has a unique fixed point
(which will be denoted by $A_{x}$). Moreover, the sequence $\left( F_{%
\mathcal{S}_{x}}^{n}\left( B\right) \right) _{n\in 
\mathbb{N}
}$ is convergent to $A_{x}$ and 
\begin{equation}
h\left( F_{\mathcal{S}_{x}}^{n}\left( B\right) ,A_{x}\right) \leq \varphi
^{n}\left( h\left( B,A_{x}\right) \right)  \label{aprox}
\end{equation}%
for every $n\in 
\mathbb{N}
$ and $B\in P_{cl,b}\left( \overline{\mathcal{O}\left( x\right) }\right) $.
In particular, for every $x\in X$ the sequence $\left( F_{\mathcal{S}%
_{x}}^{n}\left( \left\{ x\right\} \right) \right) _{n\in 
\mathbb{N}
}$ is convergent to $A_{x}$ and it results that $\mathcal{O}\left( x\right) $
is bounded.
\end{remark}

\begin{remark}
If $\mathcal{S}=\left( \left( X,d\right) ,\left( f_{i}\right) _{i\in
I}\right) $ is an oIIFS then $\left( diam\left( \overline{f_{[\alpha
]_{n}}\left( \mathcal{O}\left( x\right) \right) }\right) \right) _{n\in 
\mathbb{N}
}$ is convergent to $0$ for every $x\in X$. We obtain that there exists an
element $a_{\alpha }\left( x\right) \in X$ such that $\displaystyle%
\lim_{n\rightarrow \infty }\overline{f_{[\alpha ]_{n}}\left( \mathcal{O}%
\left( x\right) \right) }=\left\{ a_{\alpha }\left( x\right) \right\} $. In
this case, we have 
\begin{equation}
h\left( f_{[\alpha ]_{n}}\left( \mathcal{O}\left( x\right) \right) ,\left\{
a_{\alpha }\left( x\right) \right\} \right) \leq \varphi ^{n}\left(
diam\left( \mathcal{O}\left( x\right) \right) \right)  \label{ineq}
\end{equation}%
for all $n\in 
\mathbb{N}
$.
\end{remark}

\begin{lemma}
\lbrack see \cite{Str}] Let $\mathcal{S}=\left( \left( X,d\right) ,\left(
f_{i}\right) _{i\in I}\right) $ be an oIIFS, $x\in X$ and $a_{\alpha }\left(
x\right) \in X$ such that $\displaystyle\lim_{n\rightarrow \infty }\overline{%
f_{[\alpha ]_{n}}\left( \mathcal{O}\left( x\right) \right) }=\left\{
a_{\alpha }\left( x\right) \right\} $. Then 
\begin{equation*}
\lim_{n\rightarrow \infty }f_{[\alpha ]_{n}}\left( B\right) =\left\{
a_{\alpha }\left( x\right) \right\}
\end{equation*}%
for all $\alpha \in \Lambda \left( I\right) $ and $B\in P_{b}\left( 
\overline{\mathcal{O}\left( x\right) }\right) $. In particular, if $%
B=\left\{ y\right\} \subset \mathcal{O}\left( x\right) $, we have $%
\displaystyle\lim_{n\rightarrow \infty }f_{[\alpha ]_{n}}\left( y\right)
=a_{\alpha }\left( x\right) $ and 
\begin{equation}
d\left( f_{[\alpha ]_{n}}\left( y\right) ,a_{\alpha }\left( x\right) \right)
\leq \varphi ^{n}\left( diam\left( \mathcal{O}\left( x\right) \right) \right)
\label{aalfa2}
\end{equation}%
for every $n\in 
\mathbb{N}
$.
\end{lemma}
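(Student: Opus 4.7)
The plan is to derive the lemma directly from inequality (\ref{ineq}) by a short inclusion-and-continuity argument, reducing the case of an arbitrary bounded subset $B$ of $\overline{\mathcal{O}(x)}$ to the case of the full orbit $\mathcal{O}(x)$. The key elementary observation I would use throughout is that for any $A\in P_{b}(X)$ and any single point $p\in X$ one has $h(A,\{p\})=\sup_{a\in A}d(a,p)$, so $h(\cdot,\{p\})$ is monotone non-decreasing in the first argument with respect to set inclusion.

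First I would invoke that each $f_i$ is continuous (Definition \ref{iifs1}(i)), so the finite composition $f_{[\alpha]_n}$ is continuous on $X$; hence $f_{[\alpha]_n}(\overline{\mathcal{O}(x)})\subset \overline{f_{[\alpha]_n}(\mathcal{O}(x))}$. Consequently, for any $B\in P_b(\overline{\mathcal{O}(x)})$,
\begin{equation*}
f_{[\alpha]_n}(B)\subset f_{[\alpha]_n}(\overline{\mathcal{O}(x)})\subset \overline{f_{[\alpha]_n}(\mathcal{O}(x))}.
\end{equation*}
Combining the monotonicity observation above with (\ref{hpeinch}) and (\ref{ineq}), I would chain
\begin{equation*}
h(f_{[\alpha]_n}(B),\{a_\alpha(x)\})\leq h(\overline{f_{[\alpha]_n}(\mathcal{O}(x))},\{a_\alpha(x)\})=h(f_{[\alpha]_n}(\mathcal{O}(x)),\{a_\alpha(x)\})\leq \varphi^n(diam(\mathcal{O}(x))).
\end{equation*}
Since $\varphi$ is a right-continuous comparison function, $\varphi^n(diam(\mathcal{O}(x)))\to 0$ by (\ref{filan}), and $diam(\mathcal{O}(x))$ is finite by the remark preceding the lemma; thus $\lim_{n\to\infty}f_{[\alpha]_n}(B)=\{a_\alpha(x)\}$ in the Hausdorff--Pompeiu sense, as claimed.

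The particular case $B=\{y\}$ with $y\in\mathcal{O}(x)$ is then an immediate specialisation: the chain collapses to
\begin{equation*}
d(f_{[\alpha]_n}(y),a_\alpha(x))=h(\{f_{[\alpha]_n}(y)\},\{a_\alpha(x)\})\leq \varphi^n(diam(\mathcal{O}(x))),
\end{equation*}
which simultaneously delivers the pointwise convergence $f_{[\alpha]_n}(y)\to a_\alpha(x)$ and the explicit estimate stated in the lemma. The only mildly nontrivial ingredient is the continuity passage from $\mathcal{O}(x)$ to its closure, which is handled by a single application of continuity of $f_{[\alpha]_n}$; I do not anticipate a genuine obstacle here, as every remaining step is a direct application of material already available in the preliminaries and the preceding remark.
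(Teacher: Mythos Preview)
Your proof is correct. The paper does not actually supply its own proof of this lemma; it simply attributes the result to \cite{Str} and moves on, so there is nothing in the paper to compare your argument against. Your inclusion-and-monotonicity route --- using continuity of $f_{[\alpha]_n}$ to pass from $\overline{\mathcal{O}(x)}$ back to $\mathcal{O}(x)$, then exploiting the elementary identity $h(A,\{p\})=\sup_{a\in A}d(a,p)$ together with (\ref{ineq}) --- is a clean, self-contained derivation that fits well within the paper's framework.
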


We consider $B\in P_{b}\left( X\right) $ and we define 
\begin{equation*}
\hat{C}_{B}=\left\{ f:B\rightarrow X\text{ }|\text{ }f\text{ is continuous
and bounded}\right\} \text{.}
\end{equation*}
\ 

On $\hat{C}_{B}$ we consider the metric $d_{u}$ defined by $d_{u}\left(
f,g\right) =\displaystyle\sup_{x\in B}d\left( f\left( x\right) ,g\left(
x\right) \right) $. For every $i\in I$ we define the function $\hat{F}_{i}:%
\hat{C}_{B}\rightarrow \hat{C}_{B}$ given by $\hat{F}_{i}\left( f\right)
=f_{i}\circ f$ \ for every $f\in \hat{C}_{B}$. The orbit of a function $h\in 
\hat{C}_{B}$ is defined by $\mathcal{O}\left( \left\{ h\right\} \right) =%
\displaystyle\cup _{n\geq 0}\hat{F}_{\mathcal{S}}^{n}\left( \left\{
h\right\} \right) $, where $\hat{F}_{\mathcal{S}}$ is the fractal operator
associated to the system $\hat{S}=\left( (\hat{C}_{B},d_{u}),(\hat{F}%
_{i})_{i\in I}\right) $.

\begin{proposition}
Let $\mathcal{S}=\left( \left( X,d\right) ,\left( f_{i}\right) _{i\in
I}\right) $ be an oIIFS. Then the system $\hat{S}=\left( (\hat{C}%
_{B},d_{u}),(\hat{F}_{i})_{i\in I}\right) $ is an oIIFS.
\end{proposition}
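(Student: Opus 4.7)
The plan is to verify in turn the four ingredients required of an oIIFS for the system $\hat{S}$: completeness of the ambient metric space $(\hat{C}_B,d_u)$, each $\hat{F}_i$ being a well-defined continuous self-map, equal uniform continuity together with boundedness of the family $(\hat{F}_i)_{i\in I}$ on bounded sets, and finally the orbital $\varphi$-contractive condition. Completeness of $(\hat{C}_B,d_u)$ is routine: if $(g_n)$ is $d_u$-Cauchy then $(g_n(x))$ is Cauchy in the complete space $(X,d)$ for every $x\in B$, the uniform limit of continuous maps is continuous, and a uniform limit of bounded maps is bounded. That $\hat{F}_i$ sends $\hat{C}_B$ into itself also follows immediately: $f_i\circ g$ is continuous (composition), and its image $f_i(g(B))$ is bounded because $g(B)\in P_b(X)$ and the family $(f_j)_{j\in I}$ is bounded.

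For continuity of each $\hat{F}_i$ and equal uniform continuity of the family $(\hat{F}_i)_{i\in I}$ on bounded subsets of $\hat{C}_B$, the key observation I would use is the following: if $\mathcal{B}\subset\hat{C}_B$ is $d_u$-bounded, then $K_{\mathcal{B}}:=\cup_{g\in\mathcal{B}} g(B)$ is bounded in $X$. Indeed, fixing $g_0\in\mathcal{B}$, for every $g\in\mathcal{B}$ and every $x\in B$ one has $d(g(x),g_0(x))\leq d_u(g,g_0)\leq\operatorname{diam}(\mathcal{B})$, so $K_{\mathcal{B}}$ is contained in a bounded neighborhood of the bounded set $g_0(B)$. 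Given this, condition (ii) of Definition \ref{iifs1} applied to $(f_i)_{i\in I}$ on $K_{\mathcal{B}}$ transfers pointwise, and hence uniformly on $B$, to the family $(\hat{F}_i)_{i\in I}$ acting on $\mathcal{B}$. Similarly, condition (iii) for $(\hat{F}_i)_{i\in I}$ follows from the fact that $\cup_{i\in I} f_i(K_{\mathcal{B}})$ is bounded in $X$, which bounds $d_u(\hat{F}_i(g),\hat{F}_{i_0}(g_0))$ uniformly in $i\in I$ and $g\in\mathcal{B}$.

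The substantive step, and what I expect to be the main obstacle, is verifying the orbital $\varphi$-contractive condition: for every $h\in\hat{C}_B$, every $i\in I$, and every $g_1,g_2\in\mathcal{O}(\{h\})$, that
\[ d_u(\hat{F}_i(g_1),\hat{F}_i(g_2))\leq \varphi(d_u(g_1,g_2)). \]
The idea is to unfold the orbit: any element of $\mathcal{O}(\{h\})$ has the form $f_\omega\circ h$ for some $\omega\in\Lambda^{\ast}(I)\cup\{\lambda\}$, so write $g_j=f_{\omega_j}\circ h$ for $j=1,2$. Fixing $x\in B$ and setting $y=h(x)$, the points $g_1(x)=f_{\omega_1}(y)$ and $g_2(x)=f_{\omega_2}(y)$ both lie in $\mathcal{O}(y)$ in $X$. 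The orbital condition for $\mathcal{S}$ then gives
\[ d(f_i(g_1(x)),f_i(g_2(x)))\leq \varphi(d(g_1(x),g_2(x)))\leq \varphi(d_u(g_1,g_2)), \]
where the second inequality uses that $\varphi$ is increasing. Taking the supremum over $x\in B$ yields the desired inequality.

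The real delicacy, as suggested above, is the bookkeeping: one must check that every element of the orbit of $h$ under $\hat{F}_{\mathcal{S}}$ in $\hat{C}_B$ is of the form $f_\omega\circ h$ pointwise (a straightforward induction on $|\omega|$), so that the pointwise values $g_1(x),g_2(x)$ indeed land in the orbit $\mathcal{O}(h(x))$ in $X$ where the hypothesis on $\mathcal{S}$ is available. Once this is done, monotonicity of $\varphi$ does the rest, and combining the four verified items gives the conclusion that $\hat{S}$ is an oIIFS.
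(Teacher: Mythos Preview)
Your verification of the IIFS axioms (completeness of $(\hat{C}_B,d_u)$, well-definedness, equal uniform continuity on bounded sets, boundedness of the family) is more explicit than the paper's, which takes these for granted and focuses only on the orbital $\varphi$-contractivity. That part of your write-up is fine.

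There is, however, a genuine gap in your treatment of the orbital condition. You assert that ``any element of $\mathcal{O}(\{h\})$ has the form $f_\omega\circ h$ for some $\omega\in\Lambda^{\ast}(I)\cup\{\lambda\}$'' and call the verification ``a straightforward induction on $|\omega|$''. But this is not true as stated: the fractal operator $\hat{F}_{\mathcal{S}}$ involves a closure, $\hat{F}_{\mathcal{S}}(A)=\overline{\cup_{i\in I}\hat{F}_i(A)}$, so
\[
\mathcal{O}(\{h\})=\bigcup_{n\ge 0}\hat{F}_{\mathcal{S}}^{\,n}(\{h\})=\bigcup_{n\ge 0}\overline{\bigcup_{\alpha\in\Lambda_n(I)}\{f_\alpha\circ h\}}.
\]
When $I$ is infinite, an element $g\in\mathcal{O}(\{h\})$ may be only a $d_u$-limit of maps $f_{\alpha_m}\circ h$ without itself having that form, so you cannot write $g_j=f_{\omega_j}\circ h$ as you do.

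The paper repairs this by proving the weaker but sufficient claim that for every $g\in\mathcal{O}(\{h\})$ and every $x\in B$ one has $g(x)\in\mathcal{O}(\{h(x)\})$. The point is that if $g$ is a $d_u$-limit of $g_m=f_{\alpha_m}\circ h$ with each $\alpha_m\in\Lambda_{n_0}(I)$, then $g_m(x)\in F_{\mathcal{S}}^{\,n_0}(\{h(x)\})$; this set is closed (again by definition of $F_{\mathcal{S}}$), so the pointwise limit $g(x)$ lies in it as well, hence in $\mathcal{O}(\{h(x)\})$. Once you know $g_1(x),g_2(x)\in\mathcal{O}(\{h(x)\})$ for each $x\in B$, your remaining computation --- applying the orbital hypothesis for $\mathcal{S}$ at the point $h(x)$, then monotonicity of $\varphi$, then taking the supremum over $x$ --- goes through verbatim.
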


\begin{proof}
Let $h\in \hat{C}_{B}$ and $g$ $\in $ $\mathcal{O}\left( \left\{ h\right\}
\right) $.

\begin{claim}
$g\left( x\right) \in \mathcal{O}\left( \left\{ h\left( x\right) \right\}
\right) $ for every $g$ $\in $ $\mathcal{O}\left( \left\{ h\right\} \right) $
and every $x\in B$.
\end{claim}

Justification: As $g\in \mathcal{O}\left( \left\{ h\right\} \right) $, we
have 
\begin{equation*}
g\in \displaystyle\cup _{n\geq 0}\hat{F}_{\mathcal{S}}^{n}\left( \left\{
h\right\} \right) =\displaystyle\cup _{n\geq 0}\overline{\cup _{\alpha \in
\Lambda _{n}\left( I\right) }\hat{F}_{\alpha }\left( \left\{ h\right\}
\right) }=\displaystyle\cup _{n\geq 0}\overline{\cup _{\alpha \in \Lambda
_{n}\left( I\right) }\left\{ f_{\alpha }\circ h\right\} }\text{.}
\end{equation*}%
Thus, there exists $n_{0}\in 
\mathbb{N}
$ such that $g\in \overline{\cup _{\alpha \in \Lambda _{n_{0}}\left(
I\right) }\left\{ f_{\alpha }\circ h\right\} }$. Here we distinguish two
cases:

1) $g\in \cup _{\alpha \in \Lambda _{n_{0}}\left( I\right) }\left\{
f_{\alpha }\circ h\right\} $. It results that there exists $\alpha \in
\Lambda _{n_{0}}\left( I\right) $ such that $g=f_{\alpha }\circ h$. Hence, $%
g\left( x\right) =f_{\alpha }\left( h\left( x\right) \right) \subset
F_{S}^{n_{0}}\left( \left\{ h\left( x\right) \right\} \right) \in \mathcal{O}%
\left( \left\{ h\left( x\right) \right\} \right) $. So, $g\left( x\right)
\in \mathcal{O}\left( \left\{ h\left( x\right) \right\} \right) $.

2) $g\notin \cup _{\alpha \in \Lambda _{n_{0}}\left( I\right) }\left\{
f_{\alpha }\circ h\right\} $. In this case there exists a sequence $\left(
g_{m}\right) _{m\in 
\mathbb{N}
}\subset \cup _{\alpha \in \Lambda _{n_{0}}\left( I\right) }\left\{
f_{\alpha }\circ h\right\} $ such that $\displaystyle\lim_{m\rightarrow
\infty }d_{u}\left( g_{m},g\right) =0$. We have that for every $m\in 
\mathbb{N}
$, there is $\alpha _{m}\in \Lambda _{n_{0}}\left( I\right) $ such that $%
g_{m}=f_{\alpha _{m}}\circ h$. We have $g_{m}\left( x\right) =f_{\alpha
_{m}}\left( h\left( x\right) \right) \in F_{S}^{n_{0}}\left( \left\{ h\left(
x\right) \right\} \right) $ and using the fact that $F_{S}^{n_{0}}\left(
\left\{ h\left( x\right) \right\} \right) $ is a closed set, we deduce that $%
g\left( x\right) \in F_{S}^{n_{0}}\left( \left\{ h\left( x\right) \right\}
\right) \in \mathcal{O}\left( \left\{ h\left( x\right) \right\} \right) $.
So, $g\left( x\right) \in \mathcal{O}\left( \left\{ h\left( x\right)
\right\} \right) $.

\begin{claim}
$\hat{S}=\left( (\hat{C}_{B},d_{u}),(\hat{F}_{i})_{i\in I}\right) $ is an
oIIFS.
\end{claim}

Justification: Let us consider $h\in \hat{C}_{B}$ \ and $f,g\in \mathcal{O}%
\left( \left\{ h\right\} \right) $. Then $f\left( x\right) ,g\left( x\right)
\in \mathcal{O}\left( \left\{ h\left( x\right) \right\} \right) $ for all $%
x\in B$. Using this we have%
\begin{equation*}
d_{u}\left( \hat{F}_{i_{1}...i_{n}}\left( f\right) ,\hat{F}%
_{i_{1}...i_{n}}\left( g\right) \right) =d_{u}\left( f_{i_{1}...i_{n}}\left(
f\right) ,f_{i_{1}...i_{n}}\left( g\right) \right)
\end{equation*}%
\begin{equation*}
=\sup_{x\in B}d\left( f_{i_{1}...i_{n}}\left( f\left( x\right) \right)
,f_{i_{1}...i_{n}}\left( g\left( x\right) \right) \right) \overset{(\ref{o})}%
{\leq }\sup_{x\in B}\varphi ^{n}\left( d\left( f\left( x\right) ,g\left(
x\right) \right) \right)
\end{equation*}%
\begin{equation*}
\overset{\text{Definition }\ref{compfunc}\text{ }1)}{\leq }\varphi
^{n}\left( \sup_{x\in B}d\left( f\left( x\right) ,g\left( x\right) \right)
\right) =\varphi ^{n}\left( d_{u}\left( f,g\right) \right)
\end{equation*}%
for every $n\in 
\mathbb{N}
$. We deduce that $\hat{S}=\left( (\hat{C}_{B},d_{u}),(\hat{F}_{i})_{i\in
I}\right) $ is an oIIFS.
\end{proof}

\begin{remark}
The set $\mathcal{O}\left( \left\{ h\right\} \right) $ is bounded for every $%
h\in \hat{C}_{B}$.
\end{remark}

\begin{lemma}
Let $\mathcal{S}=\left( \left( X,d\right) ,\left( f_{i}\right) _{i\in
I}\right) $ be an oIIFS. Then $\mathcal{O}\left( B\right) $ is bounded for
every $B\in P_{b}\left( X\right) $.
\end{lemma}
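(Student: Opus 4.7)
The key idea is to apply the two preceding results (the function-space system $\hat{S}=((\hat{C}_B,d_u),(\hat{F}_i)_{i\in I})$ is an oIIFS, and each single-element orbit in it is bounded) to a canonical choice of test function, namely the inclusion $\iota:B\to X$, $\iota(b)=b$. Since $B\in P_b(X)$, the map $\iota$ is continuous and has bounded image, hence $\iota\in\hat{C}_B$. By the remark immediately above the statement, $\mathcal{O}(\{\iota\})$ is bounded in $(\hat{C}_B,d_u)$, so there exists $M>0$ with $d_u(g,\iota)\leq M$ for every $g\in\mathcal{O}(\{\iota\})$.

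I would then exploit the identity $\hat{F}_\alpha(\iota)=f_\alpha\circ\iota$, valid for every $\alpha\in\Lambda^{\ast}(I)\setminus\{\lambda\}$; since this function belongs to $\mathcal{O}(\{\iota\})$, one obtains
\[
d(f_\alpha(b),b)=d((f_\alpha\circ\iota)(b),\iota(b))\leq d_u(f_\alpha\circ\iota,\iota)\leq M
\]
for every $b\in B$ and every finite word $\alpha$. Fixing an arbitrary $b_0\in B$ and applying the triangle inequality gives $d(f_\alpha(b),b_0)\leq M+diam(B)$ for all such $\alpha$ and $b$, whence $\cup_{\alpha\in\Lambda_n(I)}f_\alpha(B)\subset B[\{b_0\},M+diam(B)]$ for every $n\in\mathbb{N}^{\ast}$. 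Taking closures (which does not affect diameter) yields $F_\mathcal{S}^{[n]}(B)\subset B[\{b_0\},M+diam(B)]$ for all $n\in\mathbb{N}$, and hence $\mathcal{O}(B)=\cup_{n\in\mathbb{N}}F_\mathcal{S}^{[n]}(B)$ sits inside a closed ball of finite radius and is therefore bounded.

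The only genuine difficulty is conceptual: recognizing that $\hat{S}$, built on $\hat{C}_B$, is precisely the device that lets one upgrade the pointwise statement $\mathcal{O}(x)\in P_b(X)$ (already available from the $\varphi$-contraction fixed-point theory applied on $\overline{\mathcal{O}(x)}$) to the set-valued statement $\mathcal{O}(B)\in P_b(X)$, by collapsing all of $B$ into the single test object $\iota$. Once this identification is made the proof reduces essentially to a triangle inequality.
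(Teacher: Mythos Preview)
Your proposal is correct and follows essentially the same approach as the paper: both use the inclusion/identity map $Id_B\in\hat{C}_B$, invoke the boundedness of its orbit $\mathcal{O}(\{Id_B\})$ in $(\hat{C}_B,d_u)$ established in the preceding remark, and then translate the uniform bound $d_u(f_\alpha\circ Id_B,Id_B)\leq M$ into the boundedness of $\cup_{\alpha\in\Lambda^\ast(I)}f_\alpha(B)$ in $X$. Your write-up is in fact more explicit than the paper's (which omits the triangle-inequality step with the basepoint $b_0$), but the idea is identical.
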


\begin{proof}
Let $B\in P_{b}\left( X\right) $ and $\alpha \in \Lambda ^{\ast }\left(
I\right) $. We have $\hat{F}_{\alpha }\left( Id_{B}\right) =f_{\alpha }\circ
Id_{B}\in \mathcal{O}\left( \left\{ Id_{B}\right\} \right) $. As $\mathcal{O}%
\left( \left\{ Id_{B}\right\} \right) $ is bounded we deduce that $%
\displaystyle\cup _{\alpha \in \Lambda ^{\ast }\left( I\right) }\left\{
f_{\alpha }\circ Id_{B}\right\} $ is bounded, so $\displaystyle\cup _{\alpha
\in \Lambda ^{\ast }\left( I\right) }\left\{ f_{\alpha }\circ Id_{B}\left(
B\right) \right\} $ is bounded. But $\overline{\cup _{\alpha \in \Lambda
^{\ast }\left( I\right) }\left\{ f_{\alpha }\circ Id_{B}\left( B\right)
\right\} }=\overline{\mathcal{O}\left( B\right) }$ and we infer that $%
\mathcal{O}\left( B\right) $ is bounded.
\end{proof}

\begin{theorem}
Let $\mathcal{S}=\left( \left( X,d\right) ,\left( f_{i}\right) _{i\in
I}\right) $ be an oIIFS and $F_{\mathcal{S}}$ the associated fractal
operator. Then $F_{\mathcal{S}}$ is a weakly Picard operator. Moreover,%
\begin{equation}
h\left( F_{\mathcal{S}}^{n}\left( B\right) ,A_{B}\right) \leq \varphi
^{n}\left( diam\left( \overline{\mathcal{O}\left( B\right) }\right) \right)
\label{AB2}
\end{equation}%
for all $n\in 
\mathbb{N}
$ and $B\in P_{cl,b}\left( X\right) $, where $A_{B}=\overline{\cup _{x\in
B}A_{x}}$.
\end{theorem}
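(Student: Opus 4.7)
The plan is to reduce everything to the per-orbit analysis provided by Proposition~\ref{proph}. The structural observation I would establish first is the decomposition
\begin{equation*}
F_{\mathcal{S}}^{n}(B) = \overline{\bigcup_{x \in B} F_{\mathcal{S}}^{n}(\{x\})}
\end{equation*}
for all $B \in P_{cl,b}(X)$ and $n \in \mathbb{N}$. This lets us exploit orbital contractivity globally, even though the contraction bound $d(f_{i}(y),f_{i}(z)) \leq \varphi(d(y,z))$ is available only when $y$ and $z$ lie in a common orbit $\mathcal{O}(x)$.

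I would prove the decomposition by induction on $n$, starting from $F_{\mathcal{S}}(B) = \overline{\cup_{i \in I} f_{i}(B)}$ and propagating through iteration by absorbing inner closures via continuity of each $f_{i}$ (since $f_{i}(\overline{A}) \subset \overline{f_{i}(A)}$, the outer closure swallows them at no cost); equivalently one obtains $F_{\mathcal{S}}^{n}(B) = \overline{\cup_{\alpha \in \Lambda_{n}(I)} f_{\alpha}(B)}$, which then splits over $x \in B$. Next, for fixed $x \in B$, the iterates $F_{\mathcal{S}}^{n}(\{x\})$ remain in the closed set $\overline{\mathcal{O}(x)}$ and therefore coincide with $F_{\mathcal{S}_{x}}^{n}(\{x\})$. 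Applying (\ref{aprox}) with the singleton $\{x\}$, using $h(\{x\}, A_{x}) \leq diam(\overline{\mathcal{O}(x)}) \leq diam(\overline{\mathcal{O}(B)})$, and invoking monotonicity of $\varphi$ yields
\begin{equation*}
h(F_{\mathcal{S}}^{n}(\{x\}), A_{x}) \leq \varphi^{n}(diam(\overline{\mathcal{O}(B)}))
\end{equation*}
uniformly in $x \in B$.

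Combining the decomposition with (\ref{hpeinch}) and (\ref{sup}) then gives
\begin{equation*}
h(F_{\mathcal{S}}^{n}(B), A_{B}) = h\!\left(\overline{\cup_{x \in B} F_{\mathcal{S}}^{n}(\{x\})},\; \overline{\cup_{x \in B} A_{x}}\right) \leq \sup_{x \in B} h(F_{\mathcal{S}}^{n}(\{x\}), A_{x}) \leq \varphi^{n}(diam(\overline{\mathcal{O}(B)})),
\end{equation*}
which is (\ref{AB2}); the hypotheses of (\ref{sup}) are met because both unions sit inside $\overline{\mathcal{O}(B)}$, which is bounded by the lemma immediately preceding the theorem. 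Since $\varphi^{n}(r) \to 0$ this gives $F_{\mathcal{S}}^{n}(B) \to A_{B}$ in $(P_{cl,b}(X), h)$, and $A_{B}$ is closed by definition and bounded by the same containment. The identity $F_{\mathcal{S}}(A_{B}) = A_{B}$ then follows from Remark~\ref{MB} (which provides uniform continuity of $F_{\mathcal{S}}$ on $P_{b}(M)$ for any bounded $M$ containing the orbit), via $F_{\mathcal{S}}(A_{B}) = \lim_{n} F_{\mathcal{S}}^{n+1}(B) = A_{B}$.

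The main obstacle I anticipate is the decomposition $F_{\mathcal{S}}^{n}(B) = \overline{\cup_{x \in B} F_{\mathcal{S}}^{n}(\{x\})}$: every iteration of $F_{\mathcal{S}}$ introduces an outer closure, and these must be commuted carefully past the compositions $f_{i} \circ (\,\cdot\,)$ using continuity. Once that identity is in hand, the remainder is routine bookkeeping with the monotonicity of $\varphi$ and the supremum property of the Hausdorff--Pompeiu semidistance.
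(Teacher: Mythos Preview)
Your proposal is correct and follows essentially the same route as the paper: both arguments reduce to the per-point estimate $h(F_{\mathcal{S}}^{n}(\{x\}),A_{x})\leq \varphi^{n}(diam(\mathcal{O}(x)))$ obtained from (\ref{aprox}), then pass to $B$ via the decomposition over $x\in B$ together with (\ref{hpeinch}), (\ref{sup}) and monotonicity of $\varphi$, and finally invoke uniform continuity of $F_{\mathcal{S}}$ on bounded sets for the fixed-point identity. Your treatment of the closure in the decomposition $F_{\mathcal{S}}^{n}(B)=\overline{\cup_{x\in B}F_{\mathcal{S}}^{n}(\{x\})}$ is in fact more careful than the paper's, which writes the equality without the outer closure and relies implicitly on (\ref{hpeinch}).
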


\begin{proof}
Let us consider $B\in P_{cl,b}\left( X\right) $ and $A_{B}=\overline{\cup
_{x\in B}A_{x}}$. We have%
\begin{equation*}
h\left( F_{\mathcal{S}}^{n}\left( \left\{ x\right\} \right) ,A_{x}\right) 
\overset{(\ref{aprox})}{\leq }\varphi ^{n}\left( h\left( \left\{ x\right\}
,A_{x}\right) \right) \leq \varphi ^{n}\left( diam\left( \mathcal{O}\left(
x\right) \right) \right) \text{,}
\end{equation*}%
so%
\begin{equation}
h\left( F_{\mathcal{S}}^{n}\left( \left\{ x\right\} \right) ,A_{x}\right)
\leq \varphi ^{n}\left( diam\left( \mathcal{O}\left( x\right) \right) \right)
\label{Ax2}
\end{equation}%
for every $n\in 
\mathbb{N}
$. We deduce%
\begin{equation*}
h\left( F_{\mathcal{S}}^{n}\left( B\right) ,A_{B}\right) =h\left( \cup
_{x\in B}F_{\mathcal{S}}^{n}\left( \left\{ x\right\} \right) ,\cup _{x\in
B}A_{x}\right) \overset{(\ref{sup})}{\leq }\sup_{x\in B}h\left( F_{\mathcal{S%
}}^{n}\left( \left\{ x\right\} \right) ,A_{x}\right)
\end{equation*}%
\begin{equation*}
\overset{(\ref{Ax2})}{\leq }\sup_{x\in B}\varphi ^{n}\left( diam\left( 
\overline{\mathcal{O}\left( x\right) }\right) \right) \overset{\text{%
Definition }\ref{compfunc}\text{ 1)}}{\leq }\varphi ^{n}\left( diam\left( 
\overline{\mathcal{O}\left( B\right) }\right) \right)
\end{equation*}%
for every $n\in 
\mathbb{N}
$. Therefore, 
\begin{equation*}
h\left( F_{\mathcal{S}}^{n}\left( B\right) ,A_{B}\right) \leq \varphi
^{n}\left( diam\left( \overline{\mathcal{O}\left( B\right) }\right) \right)
\end{equation*}%
for every $n\in 
\mathbb{N}
$.

By passing to limit as $n\rightarrow \infty $ we obtain that $\displaystyle%
\lim_{n\rightarrow \infty }h\left( F_{\mathcal{S}}^{n}\left( B\right)
,A_{B}\right) =0$. Using ii) from Definition \ref{iifs1} we deduce that $F_{%
\mathcal{S}}$ is uniformly continuous on bounded subsets of $X$. Hence, $F_{%
\mathcal{S}}\left( A_{B}\right) =A_{B}$.
\end{proof}

\begin{remark}
\label{atrpeorb2}If $\mathcal{S}=\left( \left( X,d\right) ,\left(
f_{i}\right) _{i\in I}\right) $ is an oIIFS, then $A_{B}=A_{x}$ for every $%
x\in X$ and $B\in P_{cl,b}\left( \overline{\mathcal{O}\left( x\right) }%
\right) $.
\end{remark}

\begin{proposition}
Let $\mathcal{S}=\left( \left( X,d\right) ,\left( f_{i}\right) _{i\in
I}\right) $ be an oIIFS. Then the function $F:P_{cl,b}\left( X\right)
\rightarrow P_{cl,b}\left( X\right) $ defined by $F\left( B\right) =A_{B}$
for all $B\in P_{cl,b}\left( X\right) $ is continuous.
\end{proposition}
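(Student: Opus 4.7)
The plan is to follow the same triangle-inequality scheme as in the proof of Proposition \ref{Fcont} for pcIIFSs, but with the sharper single-term bound (\ref{AB2}) replacing the tail-of-series bound (\ref{AB}). The key structural ingredients that make this work for oIIFSs are: (i) the estimate (\ref{AB2}); (ii) the lemma just proved, which guarantees that $\mathcal{O}(M)$ is bounded whenever $M \in P_b(X)$; and (iii) the fact that each iterate $F_{\mathcal{S}}^m$ is uniformly continuous on $P_b(M)$ (Remark \ref{MB}).

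First I would fix a sequence $(B_n)_{n\in\mathbb{N}} \subset P_{cl,b}(X)$ with $\lim_{n\to\infty} h(B_n,B) = 0$. By Remark \ref{MB}, there exists $M \in P_b(X)$ containing $B$ and every $B_n$. The preceding lemma then gives that $\overline{\mathcal{O}(M)}$ is bounded, so $D := diam\!\left(\overline{\mathcal{O}(M)}\right) < \infty$. Moreover $diam(\overline{\mathcal{O}(B_n)}) \le D$ for every $n$ and $diam(\overline{\mathcal{O}(B)}) \le D$.

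Next, for arbitrary $m,n \in \mathbb{N}$, apply the triangle inequality
\begin{equation*}
h(A_{B_n},A_B) \le h\!\left(A_{B_n}, F_{\mathcal{S}}^m(B_n)\right) + h\!\left(F_{\mathcal{S}}^m(B_n), F_{\mathcal{S}}^m(B)\right) + h\!\left(F_{\mathcal{S}}^m(B), A_B\right),
\end{equation*}
and bound the two outer terms by (\ref{AB2}), obtaining in each case at most $\varphi^m(D)$. Given $\varepsilon > 0$, I would use the fact that $\lim_{m\to\infty} \varphi^m(D) = 0$ (the right-continuous comparison function property, relation (\ref{filan})) to fix $m$ with $\varphi^m(D) < \varepsilon/3$. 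For this fixed $m$, Remark \ref{MB} yields that $F_{\mathcal{S}}^m$ is uniformly continuous on $P_b(M)$, so the middle term $h(F_{\mathcal{S}}^m(B_n), F_{\mathcal{S}}^m(B))$ tends to $0$ as $n \to \infty$. Letting $n \to \infty$ gives $\limsup_{n\to\infty} h(A_{B_n}, A_B) \le 2\varepsilon/3 < \varepsilon$, and since $\varepsilon$ was arbitrary we conclude $\lim_{n\to\infty} h(A_{B_n}, A_B) = 0$.

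The only genuine obstacle is the enveloping step: without a bounded common container $M$ (and without the lemma ensuring $\mathcal{O}(M)$ is bounded), the diameters $diam(\overline{\mathcal{O}(B_n)})$ could blow up and the bound $\varphi^m(D)$ would be useless. Once this uniform boundedness is in place, everything else is a direct transcription of the pcIIFS argument.
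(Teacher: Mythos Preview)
Your proposal is correct and follows exactly the approach the paper indicates: it reproduces the proof of Proposition \ref{Fcont} with the bound (\ref{AB2}) in place of (\ref{AB}). You also correctly identify the one extra ingredient needed in the oIIFS setting, namely the lemma guaranteeing that $\mathcal{O}(M)$ is bounded so that the diameters $diam(\overline{\mathcal{O}(B_n)})$ are uniformly controlled.
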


\begin{proof}
The proof is similar with the one used for a pcIIFS, taking into
consideration relation (\ref{AB2}).
\end{proof}

\begin{proposition}
Let $\mathcal{S}=\left( \left( X,d\right) ,\left( f_{i}\right) _{i\in
I}\right) $ be an oIIFS. Then for all $B\in P_{b}\left( X\right) $ and $%
\alpha \in \Lambda \left( I\right) $ the sequence $\left( f_{[\alpha
]_{n}}\left( B\right) \right) _{n\in 
\mathbb{N}
}$ is convergent. If we denote its limit by $a_{\alpha }\left( B\right) $ we
have%
\begin{equation}
h\left( f_{[\alpha ]_{n}}\left( B\right) ,a_{\alpha }\left( B\right) \right)
\leq \varphi ^{n}\left( diam\left( \mathcal{O}\left( B\right) \right) \right)
\label{aalfab2}
\end{equation}%
for all $n\in 
\mathbb{N}
$.
\end{proposition}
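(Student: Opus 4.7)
The plan is to reduce this to the pointwise estimate (\ref{aalfa2}) already established in the preceding lemma, and then to package it into a Hausdorff--Pompeiu bound using properties (\ref{hpeinch}) and (\ref{sup}).

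First, I would fix $B\in P_{b}(X)$ and $\alpha\in\Lambda(I)$. The earlier lemma guarantees that $\mathcal{O}(B)$ is bounded, and the inclusion $\{y\}\subset B$ yields $\mathcal{O}(y)\subset\mathcal{O}(B)$ for every $y\in B$, so monotonicity of $\varphi$ gives $\varphi^{n}(diam(\mathcal{O}(y)))\leq\varphi^{n}(diam(\mathcal{O}(B)))$. Applying (\ref{aalfa2}) with $x=y$ and the singleton $\{y\}\subset\mathcal{O}(y)$ then produces the uniform (in $y$) estimate $d(f_{[\alpha]_{n}}(y),a_{\alpha}(y))\leq\varphi^{n}(diam(\mathcal{O}(B)))$ for every $y\in B$ and $n\in\mathbb{N}$.

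Second, I would define $a_{\alpha}(B):=\overline{\cup_{y\in B}\{a_{\alpha}(y)\}}$. Each $a_{\alpha}(y)$ lies in $\overline{\mathcal{O}(y)}\subset\overline{\mathcal{O}(B)}$, and the latter is closed and bounded, so $a_{\alpha}(B)\in P_{cl,b}(X)$. Combining (\ref{hpeinch}) and (\ref{sup}) with the uniform estimate just obtained, I would compute
\begin{equation*}
h(f_{[\alpha]_{n}}(B),a_{\alpha}(B))\leq\sup_{y\in B}h(\{f_{[\alpha]_{n}}(y)\},\{a_{\alpha}(y)\})=\sup_{y\in B}d(f_{[\alpha]_{n}}(y),a_{\alpha}(y))\leq\varphi^{n}(diam(\mathcal{O}(B))),
\end{equation*}
which yields both the claimed bound (\ref{aalfab2}) and, via (\ref{filan}), the convergence of $(f_{[\alpha]_{n}}(B))_{n}$ to $a_{\alpha}(B)$.

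I do not anticipate a serious obstacle: essentially all the analytic work was carried out in establishing (\ref{aalfa2}) and the boundedness of $\mathcal{O}(B)$. The one point that requires some care is the correct use of the $\sup$-form (\ref{sup}) on unions of singleton sets indexed by $y\in B$, together with the fact that closure does not affect $h$; both are routine consequences of Proposition \ref{h}, and they are what makes it possible to obtain the single-term bound $\varphi^{n}(diam(\mathcal{O}(B)))$ rather than a tail sum as in the pcIIFS analogue.
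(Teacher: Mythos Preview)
Your proposal is correct and follows essentially the same route as the paper: define $a_{\alpha}(B)=\overline{\cup_{y\in B}\{a_{\alpha}(y)\}}$, use the pointwise estimate $d(f_{[\alpha]_{n}}(y),a_{\alpha}(y))\leq\varphi^{n}(diam(\mathcal{O}(y)))$, and lift it via (\ref{sup}) and monotonicity of $\varphi$ to the Hausdorff--Pompeiu bound. The only cosmetic difference is that the paper quotes (\ref{ineq}) rather than its corollary (\ref{aalfa2}) to get the pointwise estimate, which amounts to the same thing.
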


\begin{proof}
Let us consider $B\in P_{b}\left( X\right) $ and $\alpha \in \Lambda \left(
I\right) $. We use the notation $a_{\alpha }\left( B\right) =\overline{\cup
_{x\in B}\left\{ a_{\alpha }\left( x\right) \right\} }$. Then%
\begin{equation*}
h\left( f_{[\alpha ]_{n}}\left( B\right) ,a_{\alpha }\left( B\right) \right)
=h\left( \cup _{x\in B}\left\{ f_{[\alpha ]_{n}}\left( x\right) \right\}
,\cup _{x\in B}\left\{ a_{\alpha }\left( x\right) \right\} \right)
\end{equation*}%
\begin{equation*}
\overset{(\ref{sup})}{\leq }\sup_{x\in B}d\left( f_{[\alpha ]_{n}}\left(
x\right) ,a_{\alpha }\left( x\right) \right) \leq \sup_{x\in B}h\left(
f_{[\alpha ]_{n}}\left( \mathcal{O}\left( x\right) \right) ,\left\{
a_{\alpha }\left( x\right) \right\} \right)
\end{equation*}%
for all $n\in 
\mathbb{N}
$. \ We deduce%
\begin{equation*}
h\left( f_{[\alpha ]_{n}}\left( B\right) ,a_{\alpha }\left( B\right) \right) 
\overset{(\ref{ineq})}{\leq }\sup_{x\in B}\varphi ^{n}\left( diam\left( 
\mathcal{O}\left( x\right) \right) \right) \overset{\text{Definition }\ref%
{compfunc}\text{ 1)}}{\leq }\varphi ^{n}\left( diam\left( \mathcal{O}\left(
B\right) \right) \right)
\end{equation*}%
for all $n\in 
\mathbb{N}
$. So,%
\begin{equation*}
h\left( f_{[\alpha ]_{n}}\left( B\right) ,a_{\alpha }\left( B\right) \right)
\leq \varphi ^{n}\left( diam\left( \mathcal{O}\left( B\right) \right) \right)
\end{equation*}%
for all $n\in 
\mathbb{N}
$. Therefore, we obtain that the sequence $\left( f_{[\alpha ]_{n}}\left(
B\right) \right) _{n\in 
\mathbb{N}
}$ is convergent to $a_{\alpha }\left( B\right) $.
\end{proof}

\begin{lemma}
\label{unifcont2}Let $\mathcal{S}=\left( \left( X,d\right) ,\left(
f_{i}\right) _{i\in I}\right) $ be an oIIFS. For every $\alpha \in \Lambda
\left( I\right) $ and $B\in P_{cl,b}\left( X\right) $, the function $%
a_{\alpha }$ is uniformly continuous on $B$.
\end{lemma}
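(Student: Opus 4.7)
The plan is to mirror verbatim the strategy of Lemma \ref{unifcont}: write $a_{\alpha}$ as the uniform limit on $B$ of the maps $f_{[\alpha]_{n}}$, each of which is uniformly continuous on $B$, and conclude that $a_{\alpha}$ inherits uniform continuity.

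First, I would extract a uniform-in-$x$ bound from the proof of the preceding proposition. In the chain of inequalities leading to (\ref{aalfab2}) we already established, for every $x\in B$,
\begin{equation*}
d\bigl(f_{[\alpha]_{n}}(x),a_{\alpha}(x)\bigr)\leq \varphi^{n}\bigl(diam(\mathcal{O}(x))\bigr)\leq \varphi^{n}\bigl(diam(\mathcal{O}(B))\bigr),
\end{equation*}
using monotonicity of $\varphi$ (Definition \ref{compfunc} 1)) and the inclusion $\mathcal{O}(x)\subset\mathcal{O}(B)$. By the previous lemma $\mathcal{O}(B)$ is bounded, and by (\ref{filan}) we have $\varphi^{n}\bigl(diam(\mathcal{O}(B))\bigr)\to 0$. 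Therefore $f_{[\alpha]_{n}}\to a_{\alpha}$ uniformly on $B$.

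Second, each $f_{[\alpha]_{n}}$ is uniformly continuous on $B$. Since $B\in P_{cl,b}(X)$, Remark \ref{MB} (with $M:=B$) applies and yields that the family $(f_{\omega})_{\omega\in \Lambda_{n}(I)}$ is equal uniformly continuous on $B$; in particular $f_{[\alpha]_{n}}|_{B}$ is uniformly continuous for every $n\in\mathbb{N}$.

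Finally, I would close with the standard $\varepsilon/3$ argument: given $\varepsilon>0$, fix $n$ so large that $\varphi^{n}(diam(\mathcal{O}(B)))<\varepsilon/3$ and pick the modulus of uniform continuity $\delta_{\varepsilon}>0$ of $f_{[\alpha]_{n}}$ on $B$; for $x,y\in B$ with $d(x,y)<\delta_{\varepsilon}$, the triangle inequality through $f_{[\alpha]_{n}}(x)$ and $f_{[\alpha]_{n}}(y)$ gives $d(a_{\alpha}(x),a_{\alpha}(y))<\varepsilon$. There is no substantive obstacle: all the real content (summability estimates, boundedness of $\mathcal{O}(B)$, equal uniform continuity of iterates) has already been assembled, and this lemma is a direct corollary in the spirit of the pcIIFS analogue.
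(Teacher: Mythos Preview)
Your proposal is correct and follows exactly the approach the paper intends: the paper's own proof simply says ``similar to the pcIIFS case, taking into account relation (\ref{aalfa2}),'' and you have spelled out precisely that argument---uniform convergence of $f_{[\alpha]_n}$ to $a_\alpha$ on $B$ via the bound $\varphi^n(diam(\mathcal{O}(B)))$, uniform continuity of each $f_{[\alpha]_n}$ on $B$ from the IIFS axioms, and the standard $\varepsilon/3$ conclusion.
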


\begin{proof}
The proof is similar with the one used in the case of a pcIIFS, taking into
account relation (\ref{aalfa2}).
\end{proof}

\begin{lemma}
Let $\mathcal{S}=\left( \left( X,d\right) ,\left( f_{i}\right) _{i\in
I}\right) $ be an oIIFS. Then%
\begin{equation*}
f_{i}\left( a_{\alpha }\left( B\right) \right) =a_{i\alpha }\left( B\right)
\end{equation*}%
for every $B\in P_{b}\left( X\right) $, $\alpha \in \Lambda \left( I\right) $
and $i\in I$.
\end{lemma}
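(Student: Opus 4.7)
The plan is to mirror the argument that was used for the analogous lemma in the pcIIFS setting, namely to pass $f_i$ through the limit defining $a_\alpha(B)$ and then identify what remains with $a_{i\alpha}(B)$ by uniqueness of limits.

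First I would invoke the preceding proposition to get $\lim_{n\to\infty} f_{[\alpha]_n}(B)=a_\alpha(B)$ in the Hausdorff--Pompeiu metric (using the quantitative bound (\ref{aalfab2})). Next I would apply $f_i$ to this convergent sequence of sets. Since $\mathcal{S}$ is an IIFS, the family $(f_i)_{i\in I}$ is equal uniformly continuous on bounded sets; in particular $f_i$ is uniformly continuous on any bounded set containing $(f_{[\alpha]_n}(B))_n$ and $a_\alpha(B)$ (such a set exists by Remark \ref{MB}, since the sequence is Cauchy, hence bounded, and similarly for the limit since $P_{cl,b}(X)$ is complete). Then Proposition \ref{h} 3) (applied to the restriction of $f_i$ to this bounded set) gives
\begin{equation*}
\lim_{n\to\infty} f_i\bigl(f_{[\alpha]_n}(B)\bigr) = f_i\bigl(a_\alpha(B)\bigr).
\end{equation*}

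The second step is the bookkeeping identity $f_i\circ f_{[\alpha]_n}=f_{i[\alpha]_n}=f_{[i\alpha]_{n+1}}$, so the left-hand side equals $f_{[i\alpha]_{n+1}}(B)$. Applying the preceding proposition again, this time with the infinite word $i\alpha\in\Lambda(I)$, yields $\lim_{n\to\infty} f_{[i\alpha]_{n+1}}(B) = a_{i\alpha}(B)$. Uniqueness of the limit in $(P_{cl,b}(X),h)$ then forces $f_i(a_\alpha(B)) = a_{i\alpha}(B)$, which is the desired identity.

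I do not expect any genuine obstacle here; the only point requiring a moment of care is that Proposition \ref{h} 3) is stated for a uniformly continuous function on $X$, whereas in an IIFS the $f_i$ are only equal uniformly continuous on bounded sets. This is harmless because the entire sequence $(f_{[\alpha]_n}(B))_n$ together with $a_\alpha(B)$ lies in a common bounded set $M$, and on such an $M$ the function $f_i$ is uniformly continuous, so the proposition applies to the restriction (equivalently, one could just invoke continuity of $F_{\mathcal{S}}^{[1]}$ on $P_{cl,b}(M)$, as in Remark \ref{MB}). Once that observation is made, the proof is essentially a two-line argument, exactly parallel to the pcIIFS case.
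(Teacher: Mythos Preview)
Your proposal is correct and follows essentially the same approach as the paper: the paper's own proof simply states that it is similar to the pcIIFS case, which in turn uses exactly the argument you describe---pass $f_i$ through the limit $\lim_{n\to\infty} f_{[\alpha]_n}(B)=a_\alpha(B)$ via Proposition~\ref{h}~3) (using uniform continuity of $f_i$ on bounded sets), and conclude by uniqueness of the limit. Your additional remark about restricting to a common bounded set $M$ is a welcome clarification of a point the paper leaves implicit.
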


\begin{proof}
The proof is similar with the one used in the case of a pcIIFS.
\end{proof}

\begin{theorem}
Let $\mathcal{S}=\left( \left( X,d\right) ,\left( f_{i}\right) _{i\in
I}\right) $ be an oIIFS. Then%
\begin{equation*}
A_{B}=\overline{\cup _{\alpha \in \Lambda \left( I\right) }a_{\alpha }\left(
B\right) }=\overline{\cup _{x\in B}\cup _{\alpha \in \Lambda (I)}\left\{
a_{\alpha }\left( x\right) \right\} }
\end{equation*}%
for every $B\in P_{cl,b}\left( X\right) $.
\end{theorem}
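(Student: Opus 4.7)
The plan is to mimic the pcIIFS proof of the analogous theorem, replacing the summable tails $\sum_{k \geq n}\varphi^k(\cdot)$ with the single term $\varphi^n(\cdot)$ furnished by the oIIFS bounds. The natural decomposition is to first verify the statement when $B$ is a singleton, then assemble both displayed equalities by taking the closed union over $x \in B$.

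First, I would prove that for every $x \in X$,
\begin{equation*}
A_x = \overline{\{a_\alpha(x) \mid \alpha \in \Lambda(I)\}}.
\end{equation*}
Since $F_{\mathcal{S}}^{n}(\{x\}) = \overline{\cup_{\alpha \in \Lambda(I)}\{f_{[\alpha]_n}(x)\}}$, applying (\ref{hpeinch}) and (\ref{sup}) gives
\begin{equation*}
h\bigl(F_{\mathcal{S}}^n(\{x\}), \overline{\{a_\alpha(x) \mid \alpha \in \Lambda(I)\}}\bigr) \leq \sup_{\alpha \in \Lambda(I)} d\bigl(f_{[\alpha]_n}(x), a_\alpha(x)\bigr),
\end{equation*}
which by (\ref{aalfa2}) is at most $\varphi^n(diam(\mathcal{O}(x)))$. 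Combined with the estimate (\ref{Ax2}) for $h(F_{\mathcal{S}}^n(\{x\}), A_x)$, the triangle inequality yields
\begin{equation*}
h\bigl(A_x, \overline{\{a_\alpha(x) \mid \alpha \in \Lambda(I)\}}\bigr) \leq 2\varphi^n(diam(\mathcal{O}(x))).
\end{equation*}
Letting $n \to \infty$ and using (\ref{filan}) gives the singleton identity.

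Next, I would pass from singletons to arbitrary $B \in P_{cl,b}(X)$ by invoking the identity $A_B = \overline{\cup_{x \in B} A_x}$ (which was established in the statement of the weakly Picard theorem for oIIFSs). Substituting the singleton formula gives
\begin{equation*}
A_B = \overline{\cup_{x \in B} \overline{\{a_\alpha(x) \mid \alpha \in \Lambda(I)\}}} = \overline{\cup_{x \in B} \cup_{\alpha \in \Lambda(I)} \{a_\alpha(x)\}},
\end{equation*}
which is the second of the two desired equalities. For the first, I would use the defining description $a_\alpha(B) = \overline{\cup_{x \in B}\{a_\alpha(x)\}}$ together with the elementary fact that a closed union of closed unions equals the closed union of the underlying family; this gives
\begin{equation*}
\overline{\cup_{\alpha \in \Lambda(I)} a_\alpha(B)} = \overline{\cup_{\alpha \in \Lambda(I)} \cup_{x \in B} \{a_\alpha(x)\}} = \overline{\cup_{x \in B} \cup_{\alpha \in \Lambda(I)} \{a_\alpha(x)\}},
\end{equation*}
so both sides coincide.

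No step is genuinely deep here; the only care needed is to confirm that $a_\alpha(B)$ for an oIIFS admits the explicit description $\overline{\cup_{x \in B}\{a_\alpha(x)\}}$ (analogous to Lemma \ref{aalfaB} in the pcIIFS setting), which follows from the singleton convergence statement and the estimate (\ref{aalfab2}) via a triangle-inequality argument identical in form to Lemma \ref{aalfaB} but using $\varphi^n(diam(\mathcal{O}(B)))$ in place of the summable tail. The main (minor) obstacle is bookkeeping: ensuring that all the diameter quantities appearing in the bounds are controlled by $diam(\mathcal{O}(B))$ or $diam(\overline{\mathcal{O}(B)})$, both of which are finite by the oIIFS boundedness lemma.
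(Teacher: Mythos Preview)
Your proposal is correct and follows essentially the same route as the paper: establish $A_x=\overline{\{a_\alpha(x):\alpha\in\Lambda(I)\}}$ via the bound $\varphi^n(diam(\mathcal{O}(x)))$ and the triangle inequality with (\ref{Ax2}), then pass to $B$ using $A_B=\overline{\cup_{x\in B}A_x}$ and the description of $a_\alpha(B)$. The only superfluous step is your final caveat about re-deriving $a_\alpha(B)=\overline{\cup_{x\in B}\{a_\alpha(x)\}}$; in the oIIFS section this identity is taken as the defining notation for $a_\alpha(B)$ in the proof of the convergence proposition, so no separate Lemma~\ref{aalfaB}-style argument is needed.
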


\begin{proof}
Using a technique similar with the one used in the case of a pcIIFS, one can
prove that 
\begin{equation}
h\left( F_{\mathcal{S}}^{n}\left( \left\{ x\right\} \right) ,\overline{%
\left\{ a_{\alpha }\left( x\right) \text{ }|\text{ }\alpha \in \Lambda
\left( I\right) \right\} }\right) \overset{(\ref{aalfab2})}{\leq }\varphi
^{n}\left( diam\left( \mathcal{O}\left( x\right) \right) \right)
\label{thm4.2.1}
\end{equation}%
and%
\begin{equation*}
h\left( A_{x},\overline{\left\{ a_{\alpha }\left( x\right) \text{ }|\text{ }%
\alpha \in \Lambda \left( I\right) \right\} }\right) \leq h\left( A_{x},F_{%
\mathcal{S}}^{n}\left( \left\{ x\right\} \right) \right)
\end{equation*}%
\begin{equation*}
+h\left( F_{\mathcal{S}}^{n}\left( \left\{ x\right\} \right) ,\overline{%
\left\{ a_{\alpha }\left( x\right) \text{ }|\text{ }\alpha \in \Lambda
\left( I\right) \right\} }\right) \overset{(\ref{Ax2}),\left( \ref{thm4.2.1}%
\right) }{\leq }2\cdot \varphi ^{n}\left( diam\left( \mathcal{O}\left(
x\right) \right) \right)
\end{equation*}%
for every $n\in 
\mathbb{N}
$. Hence, 
\begin{equation}
A_{x}=\overline{\left\{ a_{\alpha }\left( x\right) \text{ }|\text{ }\alpha
\in \Lambda \left( I\right) \right\} }  \label{relAx}
\end{equation}%
and we obtain that%
\begin{equation*}
A_{B}=\overline{\cup _{x\in B}\overline{\left\{ a_{\alpha }\left( x\right) 
\text{ }|\text{ }\alpha \in \Lambda \left( I\right) \right\} }}=\overline{%
\cup _{x\in B}\cup _{\alpha \in \Lambda (I)}\left\{ a_{\alpha }\left(
x\right) \right\} }
\end{equation*}%
\begin{equation*}
=\overline{\cup _{\alpha \in \Lambda (I)}\overline{\left\{ a_{\alpha }\left(
x\right) \text{ }|\text{ }x\in B\right\} }}=\overline{\cup _{\alpha \in
\Lambda (I)}a_{\alpha }\left( B\right) }\text{.}
\end{equation*}
\end{proof}

\begin{lemma}
\label{egalpeorb2}Let $\mathcal{S}=\left( \left( X,d\right) ,\left(
f_{i}\right) _{i\in I}\right) $ be an oIIFS. Then $a_{\alpha }\left(
x\right) =a_{\alpha }\left( y\right) $ for all $y\in \overline{\mathcal{O}%
\left( x\right) }$. Moreover, for every $x,y\in X$ such that $\overline{%
\mathcal{O}\left( x\right) }\cap \overline{\mathcal{O}\left( y\right) }\neq
\varnothing $ we have $A_{x}=A_{y}$.
\end{lemma}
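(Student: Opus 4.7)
The plan is to mirror the proof of Lemma \ref{egalpeorb} from the pcIIFS setting, replacing the parent-child telescoping estimate by a direct application of the orbital contractivity (\ref{o}). The main step is to handle $y \in \mathcal{O}(x)$ first, then pass to closures via the uniform continuity of $a_\alpha$ established in Lemma \ref{unifcont2}.

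First, I would fix $x \in X$, $\alpha \in \Lambda(I)$, and take $y \in \mathcal{O}(x)$, so $y = f_\omega(x)$ for some $\omega \in \Lambda^{\ast}(I)$. Since both $x$ and $y$ belong to $\mathcal{O}(x)$, the orbital contractivity in the form (\ref{o}) applies to the pair $(x,y)$, giving
\begin{equation*}
d\bigl(f_{[\alpha]_n}(x),f_{[\alpha]_n}(y)\bigr) \leq \varphi^{n}\bigl(d(x,y)\bigr) \leq \varphi^{n}\bigl(diam(\mathcal{O}(x))\bigr).
\end{equation*}
Combining this with the triangle inequality
\begin{equation*}
d(a_\alpha(x),a_\alpha(y)) \leq d\bigl(a_\alpha(x),f_{[\alpha]_n}(x)\bigr) + d\bigl(f_{[\alpha]_n}(x),f_{[\alpha]_n}(y)\bigr) + d\bigl(f_{[\alpha]_n}(y),a_\alpha(y)\bigr),
\end{equation*}
and bounding the outer terms by (\ref{aalfa2}) (using that $\mathcal{O}(y) \subset \mathcal{O}(x)$, so $diam(\mathcal{O}(y)) \leq diam(\mathcal{O}(x))$), I obtain $d(a_\alpha(x),a_\alpha(y)) \leq 3\varphi^{n}(diam(\mathcal{O}(x)))$. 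Letting $n \to \infty$ and invoking (\ref{filan}) gives $a_\alpha(x) = a_\alpha(y)$.

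Next, for general $y \in \overline{\mathcal{O}(x)}$, I choose a sequence $(y_m)_m \subset \mathcal{O}(x)$ with $y_m \to y$. By the previous step, $a_\alpha(y_m) = a_\alpha(x)$ for every $m$. Since by Lemma \ref{unifcont2} the function $a_\alpha$ is continuous on any closed bounded set containing the $y_m$ and $y$ (e.g.\ a bounded closed ball around $y$), we pass to the limit to conclude $a_\alpha(y) = a_\alpha(x)$.

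Finally, for the second assertion, pick $z \in \overline{\mathcal{O}(x)} \cap \overline{\mathcal{O}(y)}$. Applying the first part once for $x$ and once for $y$, we get $a_\alpha(x) = a_\alpha(z) = a_\alpha(y)$ for every $\alpha \in \Lambda(I)$. Using relation (\ref{relAx}), this yields
\begin{equation*}
A_x = \overline{\{a_\alpha(x) \mid \alpha \in \Lambda(I)\}} = \overline{\{a_\alpha(y) \mid \alpha \in \Lambda(I)\}} = A_y.
\end{equation*}
The only mild subtlety I anticipate is verifying that $\mathcal{O}(y) \subset \mathcal{O}(x)$ when $y = f_\omega(x)$, so that the diameter estimate used above is valid; this follows directly from the definition of the orbit, since any iterate $f_\sigma(y)$ equals $f_{\sigma\omega}(x) \in \mathcal{O}(x)$. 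Beyond this, the argument is a direct transcription of the pcIIFS case with the orbital bound replacing the parent-child telescoping.
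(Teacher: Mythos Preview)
Your argument is correct, and it takes a genuinely different route from the paper's. The paper's one-line proof simply invokes Remark \ref{atrpeorb2} and relation (\ref{relAx}): since $\{y\}\in P_{cl,b}(\overline{\mathcal{O}(x)})$, Remark \ref{atrpeorb2} (which exploits that $F_{\mathcal{S}_x}$ is a genuine $\varphi$-contraction on $P_{cl,b}(\overline{\mathcal{O}(x)})$ with a \emph{unique} fixed point) gives $A_y=A_x$ directly, and the second assertion follows via a common $z$; for the pointwise statement $a_\alpha(y)=a_\alpha(x)$ the paper is implicitly leaning on the earlier lemma (cited from \cite{Str}) that $\lim_n f_{[\alpha]_n}(B)=\{a_\alpha(x)\}$ for every $B\in P_b(\overline{\mathcal{O}(x)})$. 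Your approach instead reworks the pcIIFS proof of Lemma \ref{egalpeorb} from scratch, replacing the parent-child telescoping by a single application of the orbital bound (\ref{o}); this is more hands-on but keeps the two settings visibly parallel and avoids appealing to the uniqueness of the attractor on orbit closures.

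One small notational slip: you write ``take $y\in\mathcal{O}(x)$, so $y=f_\omega(x)$'', but $\mathcal{O}(x)=\cup_n F_{\mathcal{S}}^n(\{x\})$ already involves closures, so not every element of $\mathcal{O}(x)$ has that form. The cleanest fix is to choose the approximating sequence in your Step 2 directly from the dense set $\{f_\omega(x):\omega\in\Lambda^{\ast}(I)\}\subset\overline{\mathcal{O}(x)}$, so that Step 1 (as you wrote it, with $y=f_\omega(x)$) applies to each $y_m$; alternatively, note that your use of (\ref{o}) in Step 1 only needs $x,y\in\mathcal{O}(x)$, and the inclusion $\mathcal{O}(y)\subset\mathcal{O}(x)$ still holds for arbitrary $y\in\mathcal{O}(x)$ by continuity of the $f_i$.
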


\begin{proof}
Using Remark \ref{atrpeorb2} and relation (\ref{relAx}) we deduce the
conclusion.
\end{proof}

\begin{proposition}
Let $\mathcal{S}=\left( \left( X,d\right) ,\left( f_{i}\right) _{i\in
I}\right) $ be an oIIFS. Then the sequence $\left( diam\left( A_{[\alpha
]_{n},x}\right) \right) _{n\in 
\mathbb{N}
}$ is convergent to $0$.
\end{proposition}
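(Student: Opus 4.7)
The plan is to essentially mimic the pcIIFS version of this proposition, but to exploit the stronger tool available in the oIIFS setting: Proposition \ref{proph}, which upgrades the orbital $\varphi$-contractivity of $\mathcal{S}$ on $\mathcal{O}(x)$ to a genuine $\varphi$-contractivity of each $f_i$ on the closed set $\overline{\mathcal{O}(x)}$. Because of this, the iterated estimates become much cleaner than in the pcIIFS case, replacing tail sums $\sum_{k\geq n}\varphi^k$ by single powers $\varphi^n$.

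The first step is to rewrite $A_{[\alpha]_n,x}=f_{[\alpha]_n}(A_x)$, so
\begin{equation*}
diam\left(A_{[\alpha]_n,x}\right)=\sup_{y,z\in A_x}d\left(f_{[\alpha]_n}(y),f_{[\alpha]_n}(z)\right).
\end{equation*}
Since $A_x\subset\overline{\mathcal{O}(x)}$ (by (\ref{relAx}) and the definition of $a_\alpha(x)$), Proposition \ref{proph} applied to $\mathcal{S}_x$ gives that each $f_i$ is a $\varphi$-contraction on $\overline{\mathcal{O}(x)}$. Iterating this along $[\alpha]_n=\alpha_1\ldots\alpha_n$ (and using that every $f_{\alpha_k}$ keeps us inside $\overline{\mathcal{O}(x)}$) yields
\begin{equation*}
d\left(f_{[\alpha]_n}(y),f_{[\alpha]_n}(z)\right)\leq\varphi^n\!\left(d(y,z)\right)
\end{equation*}
for all $y,z\in\overline{\mathcal{O}(x)}$, and in particular for all $y,z\in A_x$.

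Combining these two observations with the monotonicity of $\varphi$ (part 1) of Definition \ref{compfunc}) produces
\begin{equation*}
diam\left(A_{[\alpha]_n,x}\right)\leq\varphi^n\!\left(diam(A_x)\right)\leq\varphi^n\!\left(diam\left(\mathcal{O}(x)\right)\right),
\end{equation*}
where the last inequality uses $A_x\subset\overline{\mathcal{O}(x)}$ together with $diam(\overline{\mathcal{O}(x)})=diam(\mathcal{O}(x))$. Passing to the limit and invoking (\ref{filan}) gives $\displaystyle\lim_{n\to\infty}diam(A_{[\alpha]_n,x})=0$.

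There is no real obstacle here; the only subtle point is making sure that the orbital contractivity condition, originally stated for points in $\mathcal{O}(x)$, applies to $A_x$. That is handled by Proposition \ref{proph}, which already packaged the extension to the closure. As an alternative, one could give a direct triangle-inequality proof parallel to the pcIIFS case: for $u=a_\beta(x)$, $v=a_\gamma(x)\in A_x$, write $f_{[\alpha]_n}(u)=a_{[\alpha]_n\beta}(x)$, $f_{[\alpha]_n}(v)=a_{[\alpha]_n\gamma}(x)$, and bound each by $f_{[\alpha]_n}(x)$ using (\ref{aalfa2}) to obtain the estimate $2\varphi^n(diam(\mathcal{O}(x)))$; but the contraction-on-the-closure route above is shorter.
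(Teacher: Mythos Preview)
Your argument is correct. The main route you take, however, differs from the paper's. The paper follows the pcIIFS template literally: it writes $A_x=\overline{\{a_\beta(x):\beta\in\Lambda(I)\}}$, reduces $diam(A_{[\alpha]_n,x})$ to $\sup_{\beta,\gamma}d(a_{[\alpha]_n\beta}(x),a_{[\alpha]_n\gamma}(x))$ via the identity $f_{[\alpha]_n}(a_\beta(x))=a_{[\alpha]_n\beta}(x)$, and then inserts the common point $f_{[\alpha]_n}(x)$ in a triangle inequality, using (\ref{ineq})/(\ref{aalfa2}) to bound each half by $\varphi^n(diam(\mathcal{O}(x)))$, for a total of $2\varphi^n(diam(\mathcal{O}(x)))$. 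You instead invoke Proposition \ref{proph} to upgrade the $f_i$ to genuine $\varphi$-contractions on $\overline{\mathcal{O}(x)}$, iterate, and obtain $diam(A_{[\alpha]_n,x})\leq\varphi^n(diam(A_x))$ directly. Your approach is shorter and gives a slightly sharper bound; the paper's approach keeps the exposition parallel to the pcIIFS section and avoids re-invoking Proposition \ref{proph}. You already identify the paper's method as your ``alternative'' route, so you are aware of both.
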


\begin{proof}
Using (\ref{ineq}) and a technique similar with the one used in the case of
a pcIIFS, one can prove that $\left( diam\left( A_{[\alpha ]_{n},x}\right)
\right) _{n\in 
\mathbb{N}
}$ is convergent to $0$.
\end{proof}

\begin{proposition}
Let $\mathcal{S}=\left( \left( X,d\right) ,\left( f_{i}\right) _{i\in
I}\right) $ be an oIIFS. Then%
\begin{equation*}
a_{\alpha }\left( x\right) =\lim_{n\rightarrow \infty }A_{[\alpha ]_{n},x}
\end{equation*}%
for every $x\in X$ and $\alpha \in \Lambda \left( I\right) $.
\end{proposition}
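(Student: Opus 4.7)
The plan is to adapt the argument used in the pcIIFS case, leveraging the invariance of $A_x$ under each $f_i$. Fix $x\in X$ and $\alpha\in \Lambda(I)$.

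First I would observe that $A_x$ is forward-invariant under each $f_i$. Indeed, $A_x$ is the fixed point of $F_{\mathcal{S}_x}$, so $A_x = \overline{\cup_{i\in I} f_i(A_x)}$, which yields $f_i(A_x)\subset A_x$ for every $i\in I$ and, inductively, $f_\omega(A_x)\subset A_x$ for every $\omega\in \Lambda^{\ast}(I)$. Applying this with $\omega = \alpha_{n+1}$ and composing with $f_{[\alpha]_n}$ gives $\overline{A_{[\alpha]_{n+1},x}}\subset \overline{A_{[\alpha]_n,x}}$, so the sequence of closed sets is nested decreasing.

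Combining this nesting with the previous proposition $\lim_{n\to \infty} \mathrm{diam}(A_{[\alpha]_n,x}) = 0$, a Cantor-style intersection argument in the complete space $X$ produces a unique point $c_\alpha(x)$ with $\bigcap_{n\geq 1}\overline{A_{[\alpha]_n,x}} = \{c_\alpha(x)\}$; equivalently, $A_{[\alpha]_n,x}\to \{c_\alpha(x)\}$ in the Hausdorff-Pompeiu metric.

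It remains to identify $c_\alpha(x)$ with $a_\alpha(x)$. Write $\alpha = [\alpha]_n\,\omega_n$ with $\omega_n = \alpha_{n+1}\alpha_{n+2}\ldots \in \Lambda(I)$. The functional relation $f_{\omega}(a_\beta(x)) = a_{\omega\beta}(x)$ (the oIIFS analog of (\ref{aialfa}), obtained by induction from the lemma $f_i(a_\alpha(B)) = a_{i\alpha}(B)$ already proved for oIIFSs) gives $f_{[\alpha]_n}(a_{\omega_n}(x)) = a_\alpha(x)$. By (\ref{relAx}) we have $a_{\omega_n}(x)\in A_x$, hence $a_\alpha(x) = f_{[\alpha]_n}(a_{\omega_n}(x))\in f_{[\alpha]_n}(A_x) = A_{[\alpha]_n,x}$ for every $n$. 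Therefore $a_\alpha(x)\in \bigcap_{n\geq 1}\overline{A_{[\alpha]_n,x}} = \{c_\alpha(x)\}$, i.e.\ $c_\alpha(x) = a_\alpha(x)$, which is the claimed convergence. The only mildly delicate point is confirming the invariance $f_\omega(A_x)\subset A_x$ and the functional identity for $a_\alpha$ in the oIIFS setting; once these are in place the argument proceeds just as in the pcIIFS case.
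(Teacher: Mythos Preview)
Your proof is correct and follows essentially the same route as the paper, which simply states that the argument is the one used in the pcIIFS case. You actually supply a bit more detail than the paper does on why the sets $\overline{A_{[\alpha]_{n},x}}$ are nested (via the invariance $f_i(A_x)\subset A_x$), but the overall strategy---nesting, vanishing diameters, Cantor intersection, and the identification $a_\alpha(x)=f_{[\alpha]_n}(a_{\omega_n}(x))\in A_{[\alpha]_n,x}$---is identical.
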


\begin{proof}
The proof is similar with the one used in the case of a pcIIFS.
\end{proof}

\begin{theorem}
Let $\mathcal{S}=\left( \left( X,d\right) ,\left( f_{i}\right) _{i\in
I}\right) $ be an oIIFS. Then the function $\Theta :\Lambda ^{t}\left(
I\right) \times P_{cl,b}\left( X\right) \rightarrow P_{cl,b}\left( X\right) $
defined by 
\begin{equation*}
\Theta \left( \alpha ,B\right) =\left\{ 
\begin{array}{c}
a_{\alpha }\left( B\right) \text{, if }\alpha \in \Lambda \left( I\right) \\ 
f_{\alpha }\left( B\right) \text{, if }\alpha \in \Lambda ^{\ast }\left(
I\right) \setminus \left\{ \lambda \right\} \\ 
B\text{, if }\alpha =\lambda%
\end{array}%
\right.
\end{equation*}%
for all $\left( \alpha ,B\right) \in \Lambda ^{t}\left( I\right) \times
P_{cl,b}\left( X\right) $ is uniformly continuous on bounded sets.
\end{theorem}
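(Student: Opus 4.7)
The plan is to mirror the proof of Theorem \ref{prcan} almost verbatim, replacing the summable tail $\sum_{k\geq m}\varphi^k(\cdot)$ that arose in the pcIIFS setting by the single term $\varphi^m(\cdot)$, which is the estimate available in the oIIFS case via relation (\ref{aalfab2}). The right-continuous comparison function $\varphi$ satisfies $\lim_{m\to\infty}\varphi^m(r)=0$ for every $r>0$ (relation (\ref{filan})), so this substitution still allows us to make the relevant tails arbitrarily small.

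Fix $M\in P_{cl,b}(X)$; we show $\Theta$ is uniformly continuous on $\Lambda^t(I)\times P_{cl,b}(M)$. First I record two ``interpolation'' bounds. For $\alpha\in\Lambda(I)$ and $B\in P_{cl,b}(M)$, relation (\ref{aalfab2}) gives
\[
h\bigl(\Theta(\alpha,B),f_{[\alpha]_m}(B)\bigr)=h\bigl(a_\alpha(B),f_{[\alpha]_m}(B)\bigr)\leq \varphi^m(diam(\mathcal{O}(M))).
\]
For $\alpha\in\Lambda^\ast(I)\setminus\{\lambda\}$, if $|\alpha|\leq m$ then $f_\alpha(B)=f_{[\alpha]_m}(B)$ and the estimate is $0$; otherwise, iterating (\ref{o}) yields $h(f_\alpha(B),f_{[\alpha]_m}(B))\leq \varphi^m(diam(\mathcal{O}(M)))$ as well. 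For $\alpha=\lambda$, trivially $h(B,f_{[\lambda]_m}(B))=0$. So in all cases
\[
h\bigl(\Theta(\alpha,B),f_{[\alpha]_m}(B)\bigr)\leq \varphi^m(diam(\mathcal{O}(M))).
\]

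Now for $\alpha,\beta\in\Lambda^t(I)$ and $B,C\in P_{cl,b}(M)$, the triangle inequality gives
\[
h(\Theta(\alpha,B),\Theta(\beta,C))\leq 2\varphi^m(diam(\mathcal{O}(M)))+h(f_{[\alpha]_m}(B),f_{[\beta]_m}(B))+h(f_{[\beta]_m}(B),f_{[\beta]_m}(C))
\]
for every $m\in\mathbb{N}$. Given $\varepsilon>0$, I would first pick $m_\varepsilon$ large enough so that $\varphi^{m_\varepsilon}(diam(\mathcal{O}(M)))<\varepsilon/3$ (possible by (\ref{filan}) and boundedness of $\mathcal{O}(M)$, which was established earlier in this section). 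If $d_c(\alpha,\beta)<c^{m_\varepsilon}$ then $[\alpha]_{m_\varepsilon}=[\beta]_{m_\varepsilon}$, killing the middle term. Finally, each $f_{[\beta]_{m_\varepsilon}}$ is a finite composition of members of $(f_i)_{i\in I}$ and hence (by Remark \ref{MB}) uniformly continuous on $M$; this yields $\delta_\varepsilon>0$ such that $h(B,C)<\delta_\varepsilon$ forces $h(f_{[\beta]_{m_\varepsilon}}(B),f_{[\beta]_{m_\varepsilon}}(C))<\varepsilon/3$.

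The main obstacle is that the uniform-continuity bound for $f_{[\beta]_m}$ must be chosen uniformly in $\beta$; but once $m$ is fixed, Remark \ref{MB} states that the whole family $(f_\omega)_{\omega\in\Lambda_m(I)}$ is equal uniformly continuous on any bounded set, so the same $\delta_\varepsilon$ works for all $\beta$. Combining the three estimates gives $h(\Theta(\alpha,B),\Theta(\beta,C))<\varepsilon$ whenever $d_c(\alpha,\beta)<c^{m_\varepsilon}$ and $h(B,C)<\delta_\varepsilon$, which is the uniform continuity of $\Theta$ on $\Lambda^t(I)\times P_{cl,b}(M)$. Since $M$ was an arbitrary bounded set, $\Theta$ is uniformly continuous on bounded sets, and in particular continuous.
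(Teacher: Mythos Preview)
Your proof is correct and follows exactly the approach of the paper: it establishes the two interpolation bounds $h(\Theta(\alpha,B),f_{[\alpha]_m}(B))\leq\varphi^m(diam(\mathcal{O}(M)))$ for $\alpha\in\Lambda(I)$ and $\alpha\in\Lambda^\ast(I)$ (the paper records these with $\mathcal{O}(B)$ in place of $\mathcal{O}(M)$, which is equivalent after using monotonicity of $\varphi$), and then declares that ``a technique similar with the one used in Theorem \ref{prcan}'' finishes the argument. You have simply written out that technique in full, including the observation that the $\delta_\varepsilon$ can be chosen uniformly in $\beta$ via Remark \ref{MB}, which the paper leaves implicit.
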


\begin{proof}
Let $B\in P_{cl,b}\left( X\right) $. We have 
\begin{equation*}
h\left( a_{\alpha }\left( B\right) ,f_{[\alpha ]_{m}}\left( B\right) \right)
\leq \varphi ^{m}\left( diam\left( \mathcal{O}\left( B\right) \right) \right)
\end{equation*}%
for every $m\in 
\mathbb{N}
$, $\left( \alpha ,B\right) \in \Lambda \left( I\right) \times
P_{cl,b}\left( X\right) $ and%
\begin{equation*}
h\left( f_{\alpha }\left( B\right) ,f_{[\alpha ]_{m}}\left( B\right) \right)
\leq \left\{ 
\begin{array}{c}
0\text{ if }|\alpha |\leq m \\ 
\varphi ^{m}\left( diam\left( \mathcal{O}\left( B\right) \right) \right)%
\end{array}%
\right. \leq \varphi ^{m}\left( diam\left( \mathcal{O}\left( B\right)
\right) \right)
\end{equation*}%
for every $m\in 
\mathbb{N}
$ and $\left( \alpha ,B\right) \in \Lambda ^{\ast }\left( I\right) \times
P_{cl,b}\left( X\right) $. Using these relations and a technique similar
with the one used in Theorem \ref{prcan}, one can prove that $\Theta $ is
uniformly continuous.
\end{proof}

\begin{remark}
The results of Corollaries \ref{cor1}, \ref{cor2} and \ref{cor3} remain true
in the case of an oIIFS.
\end{remark}

\bigskip

\bigskip

\bigskip

\end{document}